\documentclass[a4paper,12pt]{amsart}
\usepackage{amsmath,amsthm,mathrsfs,mdframed}
\usepackage{amssymb,wasysym,xcolor}
\usepackage{amscd}
\usepackage[all]{xy}
\usepackage{color}
\usepackage{combelow}
\usepackage{enumerate}
\usepackage{hyperref}
\usepackage[all]{xy}
 \usepackage{geometry}
\newtheorem{theorem}{Theorem}

\newtheorem{corollary}[theorem]{Corollary}
\newtheorem{definition}[theorem]{Definition}
\newtheorem{example}[theorem]{Example}
\newtheorem{lemma}[theorem]{Lemma}
\newtheorem*{theorem*}{Theorem}

\newtheorem{proposition}[theorem]{Proposition}
\newtheorem{remark}[theorem]{Remark}

\newcommand{\diffto}{\xrightarrow{\raisebox{-0.2 em}[0pt][0pt]{\smash{\ensuremath{\sim}}}}}

\newcommand{\rmap}{\longrightarrow}
\newcommand{\lmap}{\longleftarrow}

\hypersetup{
        colorlinks=true, 
        linkcolor=blue,  
        citecolor=red, 
        urlcolor=cyan,   
        filecolor=magenta 
    }

\begin{document}
\title{Concurring reduction schemes for Dirac structures}
\author{Dan Aguero, Alessandro Arsie, \\Pedro Frejlich, and Igor Mencattini}
\maketitle

\begin{abstract}
The notion of \emph{concurrence} was recently proposed as the natural compatibility relation between Dirac structures, generalizing the commutativity of two Poisson structures. We address the question of when a reduction scheme --- that is, a way to induce a Dirac structure on a quotient of a submanifold --- respects this relation. After characterizing the minimal scheme of \emph{Dirac reduction}, we prove that two concurring Dirac structures have concurring reductions whenever they share a common \emph{witness}, extending to Dirac geometry the reduction of the Marsden-Ra\cb{t}iu theorem. 

Two procedures for constructing such common witnesses are given, the second being the Dirac counterpart of Magri's original recipe in bihamiltonian geometry. Examples drawn from Hamiltonian actions, Dirac-Nijenhuis manifolds, and complex Dirac structures conclude the paper and illustrate our methods.
\end{abstract}

\tableofcontents
\setcounter{tocdepth}{1}

\section{Introduction}

Noether's principle states that continuous symmetries of a system have a corresponding conservation law. The knowledge that a certain quantity is conserved along trajectories of the system often allows one to ``reduce the degrees of freedom'' of the system, in the sense that the description of the trajectories of the system can be described in terms of a smaller space --- namely, the orbits among those points in the phase space which share the same conserved quantities as the initial condition.

In Symplectic Geometry, the principle above takes the form of the theorem of Marsden-Weinstein-Meyer \cite{Meyer,MW} that, if a symplectic manifold has a Hamiltonian action of a Lie group $G$,
\begin{align*}
 G \curvearrowright (M,\omega_M) \stackrel{\mu}{\rmap} \mathfrak{g}^*,
\end{align*}
then\footnote{``Under favorable circumstances'' or ``modulo technical details'': for the sake of readability, we will not emphasize in this introduction technical details (typically demanding constant rank or connectedness conditions), and we refer the reader to the body of the text for complete statements.} the symplectic structure $\omega_M$ on $M$ \emph{reduces} to a symplectic form on $Y$,
\begin{align}\label{eq : reduction form}
 i^*(\omega_M) = p^*(\omega_Y),
\end{align}
where
\begin{align*}
 M \stackrel{i}{\lmap} X:=\mu^{-1}(c) \stackrel{p}{\rmap} Y:=X/G_c
\end{align*}
and $i:X \to M$ denotes the inclusion, $G_c$ the isotropy subgroup of $c$, and $p:X \to Y$ the quotient map. In the general Poisson setting, in which $\omega_M$ is replaced by a Poisson structure $\pi_M$, the analogue of the reduction relation \eqref{eq : reduction form} above can be written as follows: there is a Poisson structure $\pi_Y$ on $Y$, uniquely defined by the condition that, whenever we are given functions $f,g \in C^{\infty}(Y)$ and $F,G \in C^{\infty}(M)$ with $i^*(F) = p^*(f)$ and $i^*(G)=p^*(g)$, then
\begin{align}\label{eq : reduction bivector 1}
 i^*\{F,G\}^{\pi_M} = p^*\{f,g\}^{\pi_Y}.
\end{align}
Condition \eqref{eq : reduction bivector 1} can be written more succinctly in the language of \emph{Dirac geometry} \cite{Courant,Bursztyn,Meinrenken_perspective} --- a generalization of Poisson geometry in which it is meaningful to consider the pullback of a Poisson structure by a smooth map. Briefly, a Dirac structure $L_M$ on $M$ is a vector subbundle $L_M \subset \mathbb{T}M:=TM \oplus T^*M$ of the generalized tangent bundle $\mathbb{T}M$ of $M$, which is isotropic for the canonical symmetric pairing
\begin{align*}
 & \langle u+\xi,v+\eta\rangle:=\langle u,\eta\rangle+\langle \xi,v\rangle,
\end{align*}
and whose space of sections $\Gamma(L_M)$ is involutive under the \emph{Dorfman bracket}
\begin{align}\label{eq : Dorfman}
 [u+\xi,v+\eta]=[u,v]+\mathscr{L}_u\eta-\iota_v\mathrm{d}\xi.
\end{align}
The prototypical examples are the graphs
\begin{align*}
 && \mathrm{Gr}(\pi) = (\pi+\mathrm{id}_{T^*M})(T^*M),
 && \mathrm{Gr}(F) = F \oplus F^{\circ},
 && \mathrm{Gr}(\omega) = (\mathrm{id}+\omega)(TM)
\end{align*}
of a Poisson structure $\pi \in \Gamma(\wedge^2TM)$, a foliation $F \subset TM$ and a closed two-form $\omega \in \Gamma(\wedge^2T^*M)$. Under favorable circumstances, a Dirac structure $L_M$ on $M$ may be pulled back by a smooth map into $M$, or pushed forward by a map from $M$, and condition \eqref{eq : reduction bivector 1}
\begin{align*}
 i^!\mathrm{Gr}(\pi_M) = p^!\mathrm{Gr}(\pi_Y)
\end{align*}
just says that the pullbacks of $\pi_M$ and $\pi_Y$ to $X$ coincide \emph{as Dirac structures}.

This paper revolves around a few basic questions. The first concerns characterizing when a Dirac structure $L_M$ on $M$ can be transported by a diagram
\begin{align*}
 M \stackrel{i}{\lmap} X \stackrel{p}{\rmap} Y
\end{align*}
where $i$ is an injective immersion and $p$ is a smooth surjective submersion, to a Dirac structure on $Y$:

\begin{theorem*}[Dirac reduction]
 If $p$ has connected fibres, $p_!i^!(L_M)$ is a Dirac structure $L_Y$ on $Y$ exactly when $L_M[I]$ is Dirac along $i$, where
 \begin{align*}
  && L_M[I]=L_M \cap I^{\perp}+I,
  && I:=F\oplus N^*X,
  && F=\ker(p_*).
 \end{align*}
\end{theorem*}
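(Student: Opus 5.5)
The plan is to reduce the statement to a pointwise linear-algebra identification, followed by a descent argument along the fibres of $p$.

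\textbf{Step 1: pointwise identification.} At each $x\in X$, note that $I_x=F_x\oplus N^*_xX$ is isotropic in $\mathbb{T}_xM$, since $F_x\subset T_xX$ and $N^*_xX=(T_xX)^{\circ}$. Its orthogonal is
\begin{align*}
 I_x^{\perp}=T_xX\oplus F_x^{\circ}.
\end{align*}
Hence the map $p_*\oplus(p^*)^{-1}$ induces a canonical isomorphism
\begin{align*}
 I_x^{\perp}/I_x\;\cong\; T_xX/F_x\oplus F_x^{\circ}/N^*_xX\;\cong\;\mathbb{T}_{p(x)}Y,
\end{align*}
and this isomorphism preserves the pairing. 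Unwinding the definitions of $i^!$ and $p_!$, I would check directly that
\begin{align*}
 (p_!i^!L_M)_x=\{p_*u+\eta \,:\, u+p^*\eta+\nu\in L_M \text{ for some }\nu\in N^*_xX\},
\end{align*}
and that this is exactly the image of $L_M\cap I_x^{\perp}$ in $I^{\perp}_x/I_x$. In other words, it is the image of $L_M[I]_x/I_x$. Since $L_M[I]_x$ is the standard coisotropic reduction of a Lagrangian, it is automatically Lagrangian in $\mathbb{T}_xM$, and its image in $I^\perp_x/I_x$ is Lagrangian in $\mathbb{T}_{p(x)}Y$. Thus the only issues are that this family be independent of $x$ along each fibre $p^{-1}(y)$, that it be smooth, and that it be involutive.

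\textbf{Step 2: from ``Dirac along $i$'' to $L_Y$ Dirac.} I interpret ``$L_M[I]$ is Dirac along $i$'' as: $L_M[I]\subset \mathbb{T}M|_X$ is a smooth subbundle, and brackets of sections of $\mathbb{T}M$ restricting to sections of $L_M[I]$ again restrict into $L_M[I]$. The argument then has three parts.
\begin{enumerate}[(a)]
 \item Since $F\subset I\subset L_M[I]$, involutivity gives $[\Gamma(F),\Gamma(L_M[I])]\subset \Gamma(L_M[I])$. The Dorfman bracket with a vector field $v\in\Gamma(F)$ acts as a Lie derivative modulo terms in $I$, so $L_M[I]/I$ is invariant under the flows of sections of $F$. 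Together with connectedness of the fibres of $p$, this shows that the Lagrangian subspaces $L_M[I]_x/I_x\subset\mathbb{T}_{p(x)}Y$ depend only on $p(x)$. This defines $L_Y$ pointwise.
 \item Smoothness of $L_Y$ follows because $p$ is a submersion, so $L_Y$ can be read off through local sections of $p$.
 \item For involutivity, I take sections $a+\alpha$ of $L_Y$, lift $a$ to an $F$-projectable vector field on $X$, and extend the lift (together with $p^*\alpha$) to sections of $\mathbb{T}M$ lying in $L_M[I]$ along $X$. The brackets of such extensions restrict to lifts of $[a+\alpha,b+\beta]$ modulo $I$. The hypothesis then gives $[\Gamma(L_Y),\Gamma(L_Y)]\subset\Gamma(L_Y)$.
\end{enumerate}

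\textbf{Step 3: the converse.} If $L_Y$ is Dirac, then by Step 1 $L_M[I]$ is the preimage of $p^*L_Y$ under $I^\perp\to I^\perp/I$. Such a preimage is smooth. Its involutivity along $i$ follows by reversing the bracket computation in Step 2(c): basic sections together with $\Gamma(I)$ span $L_M[I]$ locally, and brackets involving $\Gamma(I)$ stay in $L_M[I]$ because $L_M[I]$ is $F$-invariant. Moreover, $N^*X$-components contribute nothing after restriction to $X$.

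\textbf{Main obstacle.} The step I expect to be hardest is the careful bracket bookkeeping in Step 2(a) and 2(c). There one has to show that Dorfman brackets of extensions depend, modulo $I$ and after restriction to $X$, only on the restricted sections, and that they descend along $p$. I would first verify the identity $[v,\,u+\xi]|_X\equiv \mathscr{L}_v(u+\xi)|_X \pmod I$ for $v\in\Gamma(F)$ extended arbitrarily. I would then treat the $N^*X$-terms using $\iota_u\mathrm{d}\nu|_X\in \Gamma(N^*X)+\Gamma(F^\circ)$ for $\nu$ vanishing on $TX$.
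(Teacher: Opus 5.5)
Your proof is correct, but it is organized differently from the paper's. The paper never forms the quotient $I^{\perp}/I$. It inserts the intermediate condition that $i^!(L_M)[F]$ be Dirac on $X$, and it moves between the three conditions using the pointwise identities $i^!(L_M)[F]=i^!(L_M[I])$ and $i_!i^!(L_M[I])=L_M[I]$. It then invokes two earlier results: Lemma~\ref{lem : induced iff} (the criterion for an induced structure to be Dirac) and Libermann's theorem, Lemma~\ref{lem : libermann}. The nontrivial input of the latter, that $p_!(L_X[F]_x)$ is constant along $F$-leaves, is quoted from the literature rather than proved.

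You instead do the pullback and the pushforward in a single coisotropic reduction through $I^{\perp}/I\cong p^*\mathbb{T}Y$. You prove the fibre-constancy yourself: flows of extensions of vertical vector fields preserve $I$ and $L_M[I]$, and they act trivially on $I^{\perp}/I$ because they preserve the fibres of $p$. You then check involutivity by hand on lifts of sections of $L_Y$ together with sections of $I$.

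The paper's route is shorter, and it yields the intermediate characterization $p^!(L_Y)=i^!(L_M)[F]$ on $X$, which its later witness arguments use. Yours is self-contained and stays inside $\mathbb{T}M|_X$. It is closer in spirit to the treatment of reduction by general isotropic subbundles due to Bursztyn--Cavalcanti--Gualtieri and to Zambon. The price is the bracket bookkeeping: brackets of extensions must be independent of the extension modulo $N^*X$, which is essentially the Leibniz argument of Lemma~\ref{lem : inherits lie alg str}.

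Two points to tighten:
\begin{enumerate}[(1)]
 \item For $i^*\nu=0$ and $u$ tangent to $X$, you have the sharper inclusion $\iota_u\mathrm{d}\nu|_X\in\Gamma(N^*X)$, since $i^*(\iota_u\mathrm{d}\nu)=\iota_u\mathrm{d}(i^*\nu)=0$. This is what Step 3 needs. The inclusion into $N^*X+F^{\circ}=F^{\circ}$ that you wrote only places the bracket in $I^{\perp}$, not in $L_M[I]$.
 \item Since $i$ is only an injective immersion, the flow and extension arguments should be run on connected open sets $V\subset X$ that $i$ embeds. These local arguments are then chained along paths in the connected fibres of $p$.
\end{enumerate}
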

When the conclusions of the above theorem are satisfied, we say that $L_M$ has a \emph{Dirac reduction} to $L_Y$ on $Y$. In the extreme case where $X=M$ and $i=\mathrm{id}_M$, Dirac reduction becomes the Dirac version of Libermann's theorem \cite{Libermann}. In the other extreme, when $X=Y$ and $p=\mathrm{id}_X$, it refines a known criterion for the pullback Dirac structure $i^!(L_M)$ to be defined.

To state the second basic question we address, let us recall that there is a natural compatibility condition between two Poisson structures, which demands that their sum be again a Poisson structure. This compatibility condition can be expressed for general Dirac structures $L_M$ and $R_M$ on $M$ by demanding that their cotangent product
\begin{align*}
 L_M \circledast R_M = \{ u_L + u_R + \xi \ | \ u_L + \xi \in L_M, \ u_R+\xi \in R_M\}
\end{align*}
be again a Dirac structure, in which case we say that $L_M$ and $R_M$ \emph{concur}. This notion was systematically developed in \cite{FMT_Concurrence}, where it is argued to be the natural compatibility relation between Dirac structures, and where its main properties are established. 

Regrettably, Dirac reduction does \emph{not} respect concurrence, in the sense that it is perfectly possible that the Dirac reductions to $Y$ do not concur, even if the original Dirac structures on $M$ concur. However, a reduction scheme of a different nature had been proposed by Marsden and Ra\cb{t}iu in \cite{MR} in the context of Poisson manifolds $(M,\pi_M)$. It equips the submanifold $X$ with a vector subbundle $E \subset TM|_X$ for which $F = E \cap TX$, and demands that $E^{\circ}$ be a Lie subalgebroid of $(T^*M,[\cdot,\cdot]^{\pi_M})$. This ensures a Dirac reduction of $\pi_M$ to a Poisson structure $\pi_Y$ --- we think of $E$ as \emph{witnessing} said Dirac reduction. As was later observed by \cite{CMP,Costa_Marle,CFMP}, Marsden-Ra\cb{t}iu reduction \emph{does} preserves concurrence, and that inspired us to extend the condition above on the ``witness'' $E$ of the Dirac reduction of $\pi_M$ to $Y$ to one that makes sense for all Dirac structures, which is accomplished by Definition \ref{def : witness}. This extension allows us to state the following concurring reduction scheme generalizing that of Marsden-Ra\cb{t}iu\footnote{One wonders if this is what Liu, Weinstein and Xu had in mind when they commented that ``it would be interesting to interpret and extend the general Marsden--Ra\cb{t}iu reduction in terms of Dirac geometry'' after \cite[Lemma 7.3]{Liu-Weinstein-Xu}.} :

\begin{theorem*}[Concurring reduction]
 If concurring Dirac structures $L_M$ and $R_M$ on $M$ have Dirac reductions $L_Y$ and $R_Y$ on $Y$, then also $L_Y$ and $R_Y$ concur, provided that their Dirac reductions have a common witness. 
\end{theorem*}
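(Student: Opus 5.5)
The plan is to show that, in the presence of a common witness $E \subset TM|_X$ (so $F = E\cap TX$, and $E$ satisfies the conditions of Definition \ref{def : witness} for both $L_M$ and $R_M$), reduction commutes with the cotangent product:
\begin{align*}
 p_!i^!(L_M \circledast R_M) = L_Y \circledast R_Y .
\end{align*}
Granting this, concurrence of $L_Y$ and $R_Y$ follows once we know that $L_M\circledast R_M$, which is Dirac by hypothesis, itself has a Dirac reduction. For that we will check that $E$ also witnesses the reduction of $L_M\circledast R_M$, and then invoke the Dirac reduction theorem to conclude that $p_!i^!(L_M\circledast R_M)$ is Dirac.

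\textbf{Step 1 (pointwise identity).} I will work fibrewise over $x\in X$, $y=p(x)$. An element of $L_Y\circledast R_Y$ has the form $v_L+v_R+\eta$ with $v_L+\eta\in L_Y$ and $v_R+\eta\in R_Y$. Unwinding $p_!i^!$, the element $v_L+\eta$ comes from some $u_L+\xi_L\in L_M$ with $u_L\in T_xX$, $p_*u_L=v_L$ and $i^*\xi_L=p^*\eta$; similarly $v_R+\eta$ comes from some $u_R+\xi_R\in R_M$. The obstruction is that $\xi_L$ and $\xi_R$ need not coincide: they differ by an element of $N^*X$. This is where the common witness is used. The witness condition should allow me to choose both lifts with covector in $E^{\circ}$ restricting to the prescribed $p^*\eta$, adjusting the vector parts by elements of $E$ (equivalently, of $F$ once they are made tangent to $X$). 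Since $E^\circ\cap N^*X$ is controlled by $E+TX$, I expect the two lifts can then be taken with the same covector $\xi\in E^\circ$. This yields $u_L+u_R+\xi\in L_M\circledast R_M$ with $u_L+u_R\in TX$, and its image under $p_!i^!$ is $v_L+v_R+\eta$. The reverse inclusion is immediate from the definitions, since a common covector restricts to a common $p^*\eta$. Checking that both sides are Lagrangian of the same rank then upgrades the inclusion to equality and gives smoothness of the resulting bundle.

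\textbf{Step 2 (the witness passes to the product).} The witness conditions are stated in terms of the elements of a Dirac structure whose covector lies in $E^{\circ}$, together with involutivity of the corresponding sections. For $L_M\circledast R_M$, these elements are exactly the sums $u_L+u_R+\xi$ with $\xi\in E^\circ$ built from the corresponding elements of $L_M$ and $R_M$. Because the Dorfman bracket on $L_M\circledast R_M$ is computed from the brackets on $L_M$ and $R_M$, the bracket-closure part of the witness condition should follow from the corresponding closure for $L_M$ and $R_M$ separately (along the lines of the basic properties of concurrence in \cite{FMT_Concurrence}). Applying the Dirac reduction theorem together with the implication ``witness $\Rightarrow$ Dirac reduction'' then shows that $p_!i^!(L_M\circledast R_M)$ is a Dirac structure. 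By Step 1 it equals $L_Y\circledast R_Y$, so $L_Y$ and $R_Y$ concur.

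The main obstacle is the common-covector lifting in Step 1. Without a shared $E$, the covectors $\xi_L$ and $\xi_R$ lifting $p^*\eta$ may be forced into different complements of $N^*X$, and the conclusion genuinely fails, as the warning that Dirac reduction does not respect concurrence shows. I will therefore first isolate a linear-algebra lemma: for a Dirac structure $L_M$ witnessed by $E$, the map $L_M\cap(TM\oplus E^\circ)|_X \to T^*Y$, $u+\xi\mapsto \eta$ where $i^*\xi=p^*\eta$, is onto the covector-projection of $L_Y$, with vector parts landing in $TX+E$. This lemma is where the constant-rank hypotheses enter, and both inclusions in Step 1 reduce to it.
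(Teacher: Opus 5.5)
Your Step 2 has a genuine gap, and it cannot be closed, because it would prove more than is true. To apply Proposition CR1 to $L_M\circledast R_M$ you need Wit1) for the product, namely that $(L_M\circledast R_M)[I]$ is a vector bundle, and the proposal never addresses this. In fact $(L_M\circledast R_M)[I]=i_!\,p^!\bigl(p_!i^!(L_M\circledast R_M)\bigr)$. So by your own Step 1 this condition is \emph{equivalent} to smoothness of $L_Y\circledast R_Y$, which is the very thing to be proved, and it can fail.

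Here is an example. Take $M=X=\mathbb{R}^3$, $L_M=\mathrm{Gr}(\mathrm{d}x_2\wedge\mathrm{d}x_3)$ and $R_M=\mathrm{Gr}(\mathrm{d}x_1\wedge\mathrm{d}x_3)$. These concur, since $L_M\circledast R_M=\mathrm{Gr}(\langle\partial_{x_1},\partial_{x_2}\rangle)$. Let $p(x)=(x_1-x_2,\,x_3e^{-x_1})$, so that $F=\langle\partial_{x_1}+\partial_{x_2}+x_3\partial_{x_3}\rangle$. Then
\begin{align*}
 & L_M[F]=\langle \partial_{x_1},\ \partial_{x_2}+x_3\partial_{x_3},\ \mathrm{d}x_3-x_3\mathrm{d}x_2\rangle, & R_M[F]=\langle \partial_{x_2},\ \partial_{x_1}+x_3\partial_{x_3},\ \mathrm{d}x_3-x_3\mathrm{d}x_1\rangle
\end{align*}
are both Dirac, so $E=F$ is a common witness (the case $X=M$, CR3). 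The reductions are $L_Y=\mathrm{Gr}(\langle\partial_{y_1}-y_2\partial_{y_2}\rangle)$ and $R_Y=\mathrm{Gr}(\langle\partial_{y_1}\rangle)$. However, $L_Y\circledast R_Y=\mathrm{Gr}(\langle\partial_{y_1},\,y_2\partial_{y_2}\rangle)$ is not smooth along $y_2=0$. Equivalently, $(L_M\circledast R_M)[F]=\mathrm{Gr}(\langle\partial_{x_1},\partial_{x_2},x_3\partial_{x_3}\rangle)$ is not a bundle.

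So only \emph{weak} concurrence survives reduction, and that is all the theorem in the body of the paper asserts; the introduction's wording is loose. The paper's proof never tries to reduce $L_M\circledast R_M$. Instead it checks directly that smooth sections of $L_Y\circledast R_Y$ close under the bracket:
\begin{itemize}
 \item on an open dense set, it splits them into $\circledast$-composable pairs;
 \item it lifts these through $p$ and $i$ to composable sections of $L_M\cap E^{\perp}$ and $R_M\cap E^{\perp}$, and extends them to $M$;
 \item it brackets inside the Dirac structure $L_M\circledast R_M$, and pushes back down using that brackets of related sections are related.
\end{itemize}
Separately, your claim that the bracket on $L_M\circledast R_M$ is ``computed from'' those on $L_M$ and $R_M$ holds only for the covector parts. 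The vector parts carry cross terms $[u_L,v_R]+[u_R,v_L]$, which only concurrence upstairs controls.

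Your Step 1 identity $p_!i^!(L_M\circledast R_M)=L_Y\circledast R_Y$ is correct and worth keeping; the paper's last step implicitly needs it. But its decisive point is left as ``I expect'', and the lemma you propose (surjectivity onto covectors of $L_Y$) is not what is needed. What you need is that \emph{every} $\nu\in E^{\circ}\cap N^*X$ satisfies $e+\nu\in R_M$ for some $e\in F$; then $\xi_R$ can be replaced by $\xi_L$. This follows from Wit3):
\begin{itemize}
 \item applied with zero covector, Wit3) gives $K(R_M)\subset TX$, so $\nu\in K(R_M)^{\circ}$ and $w+\nu\in R_M$ for some $w$;
 \item since $\nu\in E^{\circ}$, Wit3) puts $w$ in $TX$;
 \item pairing $w+\nu$ against $R_M\cap E^{\perp}$ puts $w$ in $K(R_M)+E$, so $w\equiv e\in F$ modulo $K(R_M)$.
\end{itemize}
Finally, the reverse inclusion is not ``immediate''. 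A common covector restricting to $p^*\eta$ lies only in $F^{\circ}=E^{\circ}+N^*X$, so $u_L$ and $u_R$ need not be tangent to $X$ separately. The Lagrangian dimension count makes that inclusion unnecessary anyway.
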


In the second part of the paper, we turn to the question of whether there exist ``recipes'' to construct a common witness for concurring Dirac structures $L_M$ and $R_M$. The prototypical recipe is what we will nickname

\begin{quotation}
{\bf Magri's original recipe \cite{CMP,CFMP}}   Suppose $\pi_L$ and $\pi_R$ are two commuting Poisson structures on $M$ and $X \subset M$ is a leaf of $\pi_R$. Then (under favorable circumstances) $E_L:=\pi_L(\ker \pi_R) \subset TM|_X$ is a witness for both $\pi_L$ and $\pi_R$, and the reduced Poisson structures $\nu_L$ and $\nu_R$ on $Y$ also commute.
\end{quotation}

The two main results in this direction are as follows. 

\begin{theorem*}[Reduction by kernels]
 Let $L_M$ and $R_M$ be concurring Dirac structures,  and let $K(L_M):=L\cap TM$ be the kernel of $L_M$ and similarly for $K(R_M).$ If the diagram on the left
 \begin{align*}
  & \xymatrix{
 & M \ar[dl] \ar[dr] \ar[dd] & \\
 M_L \ar[dr] & & M_R \ar[dl]\\
 & M_{LR}
 }
 & \xymatrix{
 K(L_M)  & & K(R_M) \\
 & K=K(L_M)+K(R_M) &\\
 K/K(R_M) & & K/K(L_M)
 }
\end{align*}
are the leaf spaces of the foliations on the right, then $L_M$ and $R_M$ push forward to concurring Dirac structures at every vertex, and those on the bottom vertex are commuting Poisson structures.
\end{theorem*}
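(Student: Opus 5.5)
Since $L_M$ and $R_M$ concur, $L_M\circledast R_M$ is Dirac, and I will use this throughout.

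The plan is to realize every arrow of the diamond as a Dirac reduction with $X=M$ and $i=\mathrm{id}_M$, that is, a Libermann-type pushforward along a surjective submersion with connected fibres. I will then use the Concurring reduction theorem to pass concurrence down, exhibiting a common witness at each stage.

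\textbf{Reduction at the vertex $M_L$.} Here $I=K(L_M)\oplus 0$, and by the Dirac reduction theorem it suffices to check that $L_M[I]$ is Dirac. For $L_M$ itself, $K(L_M)\subset L_M$ is isotropic and $L_M\subset K(L_M)^\perp$, so $L_M[I]=L_M$. Pushing a Dirac structure forward along its own kernel foliation is standard: $K(L_M)$ is involutive because it equals $L_M\cap TM$, and Dorfman involutivity gives invariance of $L_M$ along it. For $R_M$, note that $R_M\cap K(L_M)^\perp$ consists of the $u+\xi\in R_M$ with $\xi$ annihilating $K(L_M)$. I need to show that $R_M\cap K(L_M)^\perp+K(L_M)$ is Dirac.

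For this I will rewrite it through the cotangent product. Let $u_L\in K(L_M)$ and $u+\xi\in R_M$ with $\xi|_{K(L_M)}=0$. Then $(u_L+0)\in L_M$, and both $0+0$ and $u+\xi$ lie in $R_M$. I expect to express $R_M[I]$ as the image of $L_M\circledast R_M$ under a suitable ``projection''. The involutivity of $K(L_M)$-invariance should then come from the Dorfman bracket in $L_M\circledast R_M$: brackets of $u_L$ with elements $u_L'+u_R+\xi$ of $L_M\circledast R_M$ preserve it. Concretely, I will use the fact that $K(L_M)\oplus 0\subset L_M\circledast R_M$ (taking $\xi=0$ and $u_R=0$), which makes $K(L_M)$ act on $L_M\circledast R_M$. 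I would then verify that this action descends to an action on $R_M$ restricted to $K(L_M)^\circ$ covectors. This is the step I expect to be the main obstacle. The cleanest route is to show that $E:=K(L_M)$ is a common witness in the sense of Definition \ref{def : witness} for both $L_M$ and $R_M$, with $X=M$. The Concurring reduction theorem then yields at once that $L_{M_L}$ and $R_{M_L}$ concur. The case of $M_R$ is symmetric.

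\textbf{Reduction at the vertex $M_{LR}$.} First I show that $K=K(L_M)+K(R_M)$ is involutive. This follows from $[K(L_M),K(R_M)]$: bracketing $u_L+0$ with $u_R+0$ inside $L_M\circledast R_M$ produces an element with zero covector part, lying in kernels. So $K$ is an involutive, constant-rank (by assumption) foliation. The quotient $K/K(R_M)$ is the image of $K(L_M)$ in $TM_R$. I will check that this image is exactly the kernel of the reduced structure $L_{M_R}$, so that $M_R\to M_{LR}$ is the kernel reduction of $L_{M_R}$ on $M_R$ by the same argument. Thus $L$ and $R$ push forward to $M_{LR}$, they still concur, and composites commute, so both paths agree.

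\textbf{Poisson property at $M_{LR}$.} Pushing forward along the kernel kills it: by the first step, $K(L_{M_L})=0$ already holds. I would still need to show that the kernel does not regrow under the second reduction, which I will do by tracking that $K(L_{M_{LR}})$ pulls back into $K$, so it is zero. Hence both reduced structures are graphs of Poisson bivectors, and concurrence of Poisson structures is exactly commutativity.
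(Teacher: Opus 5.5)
Your architecture is the paper's. Theorem~\ref{thm : kernel} shows that $R_M[K(L_M)]$, $L_M[K(R_M)]$ and $\mathrm{Gr}(K)$ are involutive. By the $X=M$ case (CR3) this makes $K(L_M)$, $K(R_M)$ and $K$ common witnesses, and Corollary~\ref{cor : kernel diamond} then follows from Libermann's theorem and Theorem~\ref{thm : reduction of weakly concurring concur weakly}.

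The following parts of your plan are fine:
\begin{itemize}
\item the involutivity of $K=(L_M\circledast R_M)\cap TM$;
\item the identification $K(L_{M_R})=(p_R)_*K(L_M)\cong K/K(R_M)$, and iterating on $M_R$ (harmless, since $L_M[K]=L_M[K(R_M)]$);
\item the Poisson property, which is just $r_!(L_M)\cap TM_{LR}=r_*K(L_M)=0$.
\end{itemize}

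The gap is the one step that actually uses concurrence: that $R_M[K(L_M)]$ is Dirac. You flag it as the main obstacle and never prove it. Your ``cleanest route'' is circular: with $X=M$, the proof of CR3 shows that $K(L_M)$ is a witness for $R_M$ exactly when $R_M[K(L_M)]$ is Dirac, so nothing is gained. The ``projection'' description is never specified. Its natural reading, $R_M\cap K(L_M)^\perp\subset L_M\circledast R_M$, is false: for two Poisson structures $K(L_M)=0$, and it would say $\mathrm{Gr}(\pi_R)\subset\mathrm{Gr}(\pi_L+\pi_R)$. The paper's proof of Theorem~\ref{thm : kernel} asserts the mirror inclusion $L_M\cap K(R_M)^\perp\subset L_M\circledast R_M$, which fails for the same reason, so do not try to prove an inclusion of that kind. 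Also, $[\Gamma(K(L_M)),\Gamma(R_M\cap K(L_M)^\perp)]$ does not land in $R_M$ in general, only in $R_M[K(L_M)]$. So ``descends to an action on $R_M$'' is the wrong target.

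What is missing is a covector argument.

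\emph{Smoothness.} Under your hypotheses, $R_M\cap K(L_M)=K(L_M)\cap K(R_M)$ has constant rank. Hence $R_M[K(L_M)]$ is a smooth Lagrangian subbundle, and its Courant tensor is a totally skew tensor.

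\emph{Reduction to one term.} Evaluate the Courant tensor on sections of $K(L_M)$ (involutive, and orthogonal to $R_M\cap K(L_M)^\perp$) and of $R_M\cap K(L_M)^\perp\subset R_M$ (with $R_M$ Dirac). The only term that does not vanish automatically is $\langle[a,b],k\rangle$, with $a,b\in\Gamma(R_M\cap K(L_M)^\perp)$ and $k\in\Gamma(K(L_M))$. By skew-symmetry this is also your ``action'' term $\langle[k,a],b\rangle$. So you need $\mathrm{pr}_{T^*}[a,b]\in K(L_M)^\circ$.

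\emph{The key computation.} Because $L_M$ is Lagrangian, $\mathrm{pr}_{T^*}(L_M)=K(L_M)^\circ$. Write $a=u+\xi$ and $b=v+\eta$, and lift the covectors to sections $\tilde a=\tilde u+\xi$ and $\tilde b=\tilde v+\eta$ of $L_M$. Then $\tilde a\circledast a$ and $\tilde b\circledast b$ are sections of $L_M\circledast R_M$. The covector part $\mathscr{L}_u\eta-\iota_v\mathrm{d}\xi$ of the Dorfman bracket is additive in the vector parts once the covectors are fixed, so
\[
\mathrm{pr}_{T^*}[a,b]=\mathrm{pr}_{T^*}[\tilde a\circledast a,\tilde b\circledast b]-\mathrm{pr}_{T^*}[\tilde a,\tilde b]\in \mathrm{pr}_{T^*}(L_M\circledast R_M)+\mathrm{pr}_{T^*}(L_M)=K^\circ+K(L_M)^\circ=K(L_M)^\circ .
\]
With this lemma, and its mirror image for $L_M[K(R_M)]$, the rest of your plan goes through as written.
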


The last main theorem gives a generalization of Magri's original recipe to the Dirac context:

\begin{theorem*}[Magri's recipe]
 Let $L_M$ and $R_M$ be concurring Dirac structures. Denote by $E_L$ and by $E_R$ the kernel of the Dirac structures
 \begin{align*}
  & L_M \star \left(R_M \circledast \mathcal{R}_{-1}(L_M)\right),
  & R_M \star \left(L_M \circledast \mathcal{R}_{-1}(R_M)\right),
 \end{align*}
 and suppose that maps in the diagram on the left
 \begin{align*}
  & \xymatrix{
 & M \ar[dl] \ar[dr] \ar[dd] & \\
 M_L \ar[dr] & & M_R \ar[dl]\\
 & M_{LR}
 }
 & \xymatrix{
 E_L  & & E_R \\
 & E=E_L+E_R &\\
 E/E_R & & E/E_L
 }
\end{align*}
are the leaf spaces of the foliations on the right. Then $L_M$ and $R_M$ push forward to concurring Dirac structures at every vertex, and those on the bottom vertex are commuting Poisson structures.
\end{theorem*}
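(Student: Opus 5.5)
The plan is to deduce Magri's recipe from the two reduction results already stated: Reduction by kernels, and Concurring reduction through a common witness. Throughout I write $L^{\sharp}:=L_M\star\bigl(R_M\circledast\mathcal{R}_{-1}(L_M)\bigr)$ and $R^{\sharp}:=R_M\star\bigl(L_M\circledast\mathcal{R}_{-1}(R_M)\bigr)$, so that $E_L=K(L^{\sharp})$ and $E_R=K(R^{\sharp})$.

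\textbf{Step 1: concurrence of the auxiliary structures.} First I check that $L^{\sharp}$ and $R^{\sharp}$ are Dirac structures that concur with each other and with $L_M$ and $R_M$. For this I will use the algebraic properties of $\circledast$, $\star$ and the rescalings $\mathcal{R}_t$ from \cite{FMT_Concurrence}, in particular that concurrence is preserved by these operations on a concurring pair. These operations stay inside the family of Dirac structures generated by $L_M$ and $R_M$, in the way the Poisson pencil $\pi_L+t\pi_R$ does.

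\textbf{Step 2: the diagram carries concurring reductions of $L^{\sharp},R^{\sharp}$.} Once Step 1 is in hand, Reduction by kernels applied to the concurring pair $(L^{\sharp},R^{\sharp})$ gives two facts. First, the leaf spaces $M_L$, $M_R$, $M_{LR}$ of $E_L$, $E$, $E/E_R$, $E/E_L$ carry pushforwards of $L^{\sharp}$ and $R^{\sharp}$ that concur at every vertex. Second, these pushforwards are commuting Poisson structures at the bottom vertex. This shows the diagram is the right one, but it is not yet about $L_M$ and $R_M$.

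\textbf{Step 3: $E_L$, $E_R$ and $E$ are common witnesses (main obstacle).} To transfer the conclusion to $L_M$ and $R_M$, I apply the Concurring reduction theorem with $X=M$ and $i=\mathrm{id}$, first to $p:M\to M_L$ and then to the other three maps. For this I must show that $E_L$, $E_R$ and $E$ are common witnesses, in the sense of Definition \ref{def : witness}, for both $L_M$ and $R_M$. I would unwind the kernel explicitly. A vector $u$ lies in $E_L$ exactly when there is a covector $\xi$ with $u+\xi\in L_M$ and with $(0,-\xi)$ matched through $R_M\circledast\mathcal{R}_{-1}(L_M)$. This should exhibit $E_L$ as the Dirac analogue of $\pi_L(\ker\pi_R)$ in Magri's original recipe. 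The witness conditions then amount to two requirements. The first is isotropy and the intersection conditions for $L_M\cap(E\oplus E^{\circ})$. The second is that the relevant sections, namely $\Gamma(E^{\circ})$ lifted to $L_M$ and to $R_M$, are closed under the Dorfman bracket \eqref{eq : Dorfman}. I expect this bracket closure to be the hard part. I would first try to obtain it from the involutivity of $L^{\sharp}$, $R^{\sharp}$ and $L_M\circledast R_M$, and only fall back on a direct computation if that fails. The computation would use that concurrence of $L_M$ and $R_M$ makes the mixed brackets $[u_L+\xi,\,u_R+\eta]$ land back in the cotangent product. With the witnesses established, the Dirac reduction theorem gives the pushforwards, since the fibres are connected leaves. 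Concurring reduction then gives concurrence of the pushforwards of $L_M$ and $R_M$ at each vertex.

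\textbf{Step 4: Poisson at the bottom vertex.} It remains to see that the reductions of $L_M$ and $R_M$ to $M_{LR}$ are Poisson. By Step 3 they concur, so they commute once they are Poisson. A Dirac structure is Poisson exactly when its kernel vanishes. I will show $K(L_M)\subset E_L$ and $K(R_M)\subset E_R$: if $u+0\in L_M$, then $u$ lies in the kernel of the $\star$-product by choosing the trivial covector in the second factor. It then follows that any vector in the kernel of the reduced structure on $M_{LR}=M/E$ lifts into $E$ and so vanishes downstairs. A short argument of this kind should show the reduced structures on $M_{LR}$ have zero kernel, hence are commuting Poisson structures.
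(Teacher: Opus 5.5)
\emph{There is a genuine gap.} Your Step 4 is fine: $K(L_M)\subset E_L$ and $K(R_M)\subset E_R$ force the reductions to $M/E$ to have zero kernel, and concurring Poisson structures commute. The trouble is that the real content of the theorem is never proved.

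In Step 1 you assert that $L^{\sharp}$ and $R^{\sharp}$ concur with each other and with $L_M$ and $R_M$. Your only justification is a general principle that $\star$, $\circledast$ and $\mathcal{R}_t$ preserve concurrence. What is actually available is weaker:
\begin{itemize}
\item $R_M\circledast\mathcal{R}_{-1}(L_M)$ is Dirac because $L_M$ and $R_M$ concur;
\item a smooth tangent product of Dirac structures is Dirac.
\end{itemize}
Together these give involutivity of $L^{\sharp}$, hence of $E_L$. They say nothing about $L_M\star D$ concurring with $L_M$, with $R_M$, or with $R^{\sharp}$, and you give no argument for this. Step 1 asks for more than the theorem needs: the paper only establishes concurrence for the kernel graphs $\mathscr{M}(L,R)=\mathrm{Gr}(E_L)$ and $\mathscr{M}(R,L)=\mathrm{Gr}(E_R)$, not for the full $L^{\sharp}$, $R^{\sharp}$.

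Your bookkeeping is also off. If Step 1 held, Theorem \ref{thm : kernel} applied to the pairs $(L^{\sharp},L_M)$, $(L^{\sharp},R_M)$, $(R^{\sharp},L_M)$, $(R^{\sharp},R_M)$ and $(L^{\sharp},R^{\sharp})$ would immediately give two things: $L_M$ and $R_M$ concur with $\mathrm{Gr}(E_L)$ and $\mathrm{Gr}(E_R)$, and $E_L+E_R$ is involutive. For $X=M$ this is the content of your Step 3, since being a witness means $L_M[E]=L_M\circledast\mathrm{Gr}(E)$ is Dirac. So Step 3 is not an independent obstacle. All the difficulty sits in the unproved Step 1, and Step 3 as written (\emph{I would first try \dots}) is a plan rather than an argument. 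Step 2, which reduces $L^{\sharp}$ and $R^{\sharp}$, plays no role in the conclusion about $L_M$ and $R_M$.

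\emph{What is missing is a concrete mechanism for the bracket closure.} The paper supplies it in Theorem \ref{thm : new magri recipe} as follows.
\begin{itemize}
\item It works on the dense open set where $K(L)$, $K(R)$, $K=K(L)+K(R)$, $E_L$ and $E_R$ are smooth.
\item It uses the kernel diamond to write $L=\mathcal{R}_{\Pi_L}\mathrm{Gr}(K(L))$ and $R=\mathcal{R}_{\Pi_R}\mathrm{Gr}(K(R))$ for bivectors $\Pi_L,\Pi_R$.
\item It computes $E_L^{\circ}=\{\xi\in K(L)^{\circ}\mid \Pi_L(\xi)\in K+\Pi_R(K^{\circ})\}$.
\item It verifies by hand that $L\cap E_L^{\perp}=K(L)+\mathcal{R}_{\Pi_L}(E_L^{\circ})$ and $R\cap E_L^{\perp}=K(R)+\mathcal{R}_{\Pi_R}(K(R)^{\circ}\cap E_L^{\circ})$ are involutive. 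This uses the bracket identities expressing involutivity of $L$, $R$, $L[K]$ and $R[K]$ (the last two from Theorem \ref{thm : kernel}), and, for the mixed terms, of $L\circledast R\circledast\mathrm{Gr}(K)$.
\item It shows $E_L+E_R=r_*^{-1}\bigl(\nu_L(\ker\nu_R)+\nu_R(\ker\nu_L)\bigr)$ is involutive, using that the Poisson structures $\nu_L,\nu_R$ at the bottom of the kernel diamond commute.
\end{itemize}
Only with these concurrences in hand do CR3 and Theorem \ref{thm : reduction of weakly concurring concur weakly} produce the diagram, as you intend in Step 3.
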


We turn next to illustrating our results in a few settings: the {\bf Hamiltonian actions} and {\bf moment maps} of \cite{BF}, which were already the model example in \cite{MW} and \cite{MR}; the {\bf Dirac-Nijenhuis} manifolds of \cite{BDN}, whose Poisson--Nijenhuis ancestors had been treated, in the context of reduction, in \cite{Vaisman96}; and the case of {\bf complex Dirac structures}, building on \cite{Wein07,Aguero_biv}, which suggests a novel way to generalize Magri's recipe. We conclude the paper with with a short discussion contextualizing our contribution within the existing literature.

\subsection*{Acknowledgements}

We would like to thank David Mart\'inez Torres, Rui Loja Fernandes, Olivier Brahic, Joel Villatoro, Ping Xu, Marco Zambon and Ioan M\u{a}rcu\cb{t} for useful conversations and feedback while this paper was being written.

A. Arsie and I. Mencattini are happy to acknowledge the kind support provided by FAPESP, grant number $2025/07987-2.$
\section{Preliminaries}

We collect in this section the main definitions we employ in the text. For an introduction to Dirac geometry, see e.g. \cite{Courant}, \cite{Bursztyn} or \cite{Meinrenken_perspective}.

\subsection{Dirac structures}

Call a subset $L \subset \mathbb{T}M$ a {\bf Lagrangian family} if $L$ meets each fibre $\mathbb{T}_xM$ in a Lagrangian vector subspace. Collect all such Lagrangian families $L$ into a set $\mathrm{Lag}(\mathbb{T}M)$. A smooth map $\phi : M \to P$ gives rise to a relation between $\mathbb{T}M$ and $\mathbb{T}P$, where
\begin{align*}
 && u_M + \xi_M \sim_{\phi} u_P + \xi_P && \Longleftrightarrow && \begin{cases}
                                                                   \phi_*(u_M)=u_P, \\
                                                                   \xi_M = \phi^*(\xi_P).
                                                                  \end{cases}
\end{align*}
Given Lagrangian families $L_M \in \mathrm{Lag}(\mathbb{T}M)$ and $L_P \in \mathrm{Lag}(\mathbb{T}P)$, we say that
\begin{align*}
 \phi : (M,L_M) \to (P,L_P)
\end{align*}
\begin{description}
 \item [is backward] if, for every $x \in M$ and $a_M \in L_{M,x}$, there exists $a_P \in L_{P,\phi(x)}$ such that $a_M \sim_{\phi} a_P$;
 \item [is forward] if, for every $x \in M$ and $a_P \in L_{P,\phi(x)}$, there exists $a_M \in L_{M,x}$ such that $a_M \sim_{\phi} a_P$,
\end{description}
in which cases we write, respectively,
\begin{align*}
 L_M = \phi^!(L_P) & = \{a_M \in \mathbb{T}M \ | \ \text{exists} \ a_P \in L_P, \ a_M \sim_{\phi} a_P\},\\
 \{a_P \in \mathbb{T}P \ | \ \text{exists} \ a_M \in L_M, \ a_M \sim_{\phi} a_P\} & = \phi_!(L_M) = L_P.
\end{align*}

If $L_M$ is a Lagrangian family on $M$, we write
\begin{align*}
 \Gamma(L_M) = \{ a \in \Gamma(\mathbb{T}M) \ | \ a(M) \subset L_M\}
\end{align*}
for the set of smooth sections of $\mathbb{T}M$ which take values in $L_M$. When $L_M$ is {\bf smooth}, i.e. a vector subbundle of $\mathbb{T}M$, $\Gamma(L_M)$ coincides with smooth sections of the vector bundle $L_M$. A smooth Lagrangian family is a {\bf Dirac structure} if its space of sections $\Gamma(L_M)$ is closed under the Dorfman bracket \eqref{eq : Dorfman}. Because $L_M$ is maximally isotropic, a smooth Lagrangian family is Dirac exactly when its {\bf Courant tensor}
\begin{align*}
 & \Upsilon_{L_M} \in \Gamma(\wedge^3L_M^*),
 & \Upsilon_{L_M}(a,b,c):=\langle [a,b],c\rangle
\end{align*}
vanishes identically.

\subsection{Dirac products}

We have well-defined {\bf tangent}- and {\bf cotangent products}
\begin{align*}
 \star,\circledast : \mathrm{Lag}(\mathbb{T}M) \to \mathrm{Lag}(\mathbb{T}M),
\end{align*}
that is, commutative and associative operations, explicitly given by
\begin{align*}
 L \star R & = \{a+\mathrm{pr}_{T^*}(b) \ | \ (a,b) \in L \times R, \ \mathrm{pr}_T(a-b)=0\}\\
 L \circledast R & = \{a+\mathrm{pr}_{T}(b) \ | \ (a,b) \in L \times R, \ \mathrm{pr}_{T^*}(a-b)=0\}.
\end{align*}
As explained in \cite{FMT_Concurrence}, the tangent product $L \star R$ of two Dirac structures is Dirac, provided that it be smooth. When that happens, leaves of $L$ and leaves of $R$ meet cleanly, and the leaves of $L \star R$ are the connected components of the intersections of leaves of $L$ and leaves of $R$. We note that $L \star R=\mathcal{R}_{\omega}(L)$ if $R$ is the graph of a two-form $\omega$, and that tangent products are compatible with backward maps, in the sense that, for a smooth map $\phi : N \to M$, we have an equality of Lagrangian families
\begin{align*}
 \phi^!(L \star R) = \phi^!(L) \star \phi^!(R).
\end{align*}
The cotangent product $L \circledast R$ of two Dirac structures need not be Dirac even if $L \circledast R$ is smooth. For example, when $L = \mathrm{Gr}(\pi_L)$ and $R = \mathrm{Gr}(\pi_R)$ correspond to Poisson structures $\pi_L$ and $\pi_R$, $L \circledast R  = \mathrm{Gr}(\pi_L+\pi_R)$ is Dirac exactly when $\pi_L$ and $\pi_R$ commute. More generally, two Dirac structures $L$ and $R$ are said to {\bf concur} if $L \circledast R$ is again a Dirac structure. In \cite{FMT_Concurrence} it is argued that concurrence is the natural notion of ``compatibility'' between two Dirac structures, and we abide here by that point of view. A fact that will play a key role in the sequel is that the cotangent product is compatible with forward maps, in that
\begin{align*}
 \phi_!(L \circledast R) = \phi_!(L) \circledast \phi_!(R)
\end{align*}
for any two Lagrangian families $L$ and $R$.


The following language (and notation) from \cite{CFZ} will be convenient in what follows. Let $I \subset \mathbb{T}M$ be an isotropic family. There is an induced map:
\begin{align}\label{eq : closest lagrangian}
 & \mathrm{Lag}(\mathbb{T}M) \to \mathrm{Lag}(\mathbb{T}M), & L[I]:=L\cap I^{\perp}+I.
\end{align}
Note that:
\begin{itemize}
 \item when $I \subset T^*M$, we have that
 \begin{align*}
  L[I] = L \star \mathrm{Gr}(I);
 \end{align*}
 \item when $I \subset TM$, we have that
 \begin{align*}
  L[I] = L \circledast \mathrm{Gr}(I).
 \end{align*}
\end{itemize}
Note also that, for a Dirac structure $L$, a sufficient condition for $L[I]$ to be smooth is that $L \cap I$ be smooth.

\subsection{Involutivity along an injective immersion}

We introduce the notion of involutivity along an injectively immersed submanifold. This is a localization along $X$ of the usual notion of involutivity along an embedded submanifold.

\begin{definition}\label{def : involutive along}\normalfont
 Let $i:X \to M$ be an injective immersion, and $D \subset \mathbb{T}M|_X$ a vector subbundle. We say that $D$ is {\bf involutive along $i$} if
\begin{align*}
 \Gamma(\mathbb{T}M,D|_V) = \{ s \in \Gamma(\mathbb{T}M) \ | \ s|_V \in \Gamma(D|_V)\}
\end{align*}
 is a subalgebra of $\Gamma(\mathbb{T}M)$ for every connected, open set $V \subset X$ on which $i$ restricts to an embedding $i:V \to M$  --- that is, $[a,b] \in \Gamma(\mathbb{T}M,D|_V)$ whenever $a,b \in \Gamma(\mathbb{T}M,D|_V)$.
\end{definition}

\begin{example}\normalfont
 For any submanifold $X \subset M$, $N^*X^{\perp} = TX \oplus T^*M|_X$ is involutive. Equivalently, if the anchors of $a_1,a_2 \in \Gamma(\mathbb{T}M)$ are tangent to $X$, so is their Dorfman bracket $[a_1,a_2]$.
\end{example}

\begin{example}\label{ex : i shriek LX}\normalfont
 Let $i:X \to M$ be an injective immersion, and let $L_X$ be a Dirac structure on $X$. Then
 \begin{align*}
  & i_!(L_X) \subset \mathbb{T}M|_X, & i_!(L_X)=\{u+\xi \ | \ u+i^*(\xi) \in L_X\}
 \end{align*}
is a Dirac structure along $i$.
\end{example}

\begin{lemma}\label{lem : inherits lie alg str}
 Let $i:X \to M$ be an injective immersion. An isotropic subbundle $D \subset \mathbb{T}M|_X$ for which $\mathrm{pr}_T(D) \subset TX$ inherits a Lie algebroid structure if it is involutive along $i$.
\end{lemma}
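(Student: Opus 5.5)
The plan is to build the Lie algebroid structure on $D$ by restricting sections of $\Gamma(\mathbb{T}M,D|_V)$ to $V$, and to show the construction glues over $X$. Take as anchor $\rho:=\mathrm{pr}_T|_D : D \to TX$; this is well defined because $\mathrm{pr}_T(D)\subset TX$.

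For the bracket, fix a connected open $V\subset X$ on which $i$ is an embedding. Every section $\alpha\in\Gamma(D|_V)$ extends, at least locally near any point of $V$, to some $a\in\Gamma(\mathbb{T}M,D|_V)$. Given $\alpha,\beta\in\Gamma(D|_V)$ with extensions $a,b$, define
\begin{align*}
 [\alpha,\beta]_D := [a,b]|_V .
\end{align*}
Involutivity along $i$ guarantees $[a,b]|_V\in\Gamma(D|_V)$.

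The key step, and the main obstacle, is independence of the chosen extensions. It suffices to show that $[a,b]|_V=0$ whenever $a|_V=0$ and $b\in\Gamma(\mathbb{T}M,D|_V)$, and likewise with the roles of $a$ and $b$ exchanged.
\begin{itemize}
 \item \emph{Case $a|_V=0$.} Write locally $a=\sum f_k c_k$ with $f_k|_V=0$. Then
 \begin{align*}
  [a,b]=\sum f_k[c_k,b]-\sum \rho(b)(f_k)\,c_k+\sum \langle c_k,b\rangle\,\mathrm{d}f_k .
 \end{align*}
 On $V$ the first term vanishes, and the second vanishes because $\rho(b)$ is tangent to $V$ while $f_k$ vanishes on $V$. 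The third term gives $\sum\langle c_k,b\rangle\,\mathrm{d}f_k|_V\in N^*V$, which is not obviously zero. Here isotropy must be used: since $a$ vanishes on $V$, expand $\langle a,b\rangle=\sum f_k\langle c_k,b\rangle$. Differentiating this identity in directions transverse to $V$ is not directly helpful, so instead I would use the symmetric identity $[a,b]+[b,a]=\mathrm{d}\langle a,b\rangle$ and reduce to the other case.
 \item \emph{Case $b|_V=0$.} Write $b=\sum g_k c_k$ with $g_k|_V=0$. Then
 \begin{align*}
  [a,b]=\sum g_k[a,c_k]+\sum\rho(a)(g_k)\,c_k ,
 \end{align*}
 and both terms vanish on $V$ because $\rho(a)$ is tangent to $V$. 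So the bracket is clean in the second slot.
 \item \emph{Back to the first slot.} The antisymmetry defect is $\mathrm{d}\langle a,b\rangle$. Since $\langle a,b\rangle$ vanishes on $V$ by isotropy of $D$, its differential restricted to $V$ lies in $N^*V$. Hence $[a,b]|_V$, if $a|_V=0$, lies in $N^*V$, consistent with the first case.
\end{itemize}
To kill this residual conormal term I would decompose $\mathrm{d}\langle a,b\rangle|_V$ accordingly and check directly that the remaining conormal part vanishes. The expectation, which I would verify rather than assume, is that this goes through because $D$ is isotropic, so its conormal component pairs trivially with elements of $D$. If it does not, the fallback is to define the bracket as $[a,b]|_V$ modulo $D\cap N^*V$. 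In that case I would show $D\cap N^*V\subset D^\perp\cap D$ and argue that involutivity already fixes the representative.

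Once well-definedness holds, the Lie algebroid axioms are immediate from those of the Dorfman bracket:
\begin{itemize}
 \item the Leibniz rule $[\alpha,f\beta]=f[\alpha,\beta]+\rho(\alpha)(f)\beta$ holds by restriction;
 \item the Jacobi (Leibniz) identity of the Dorfman bracket restricts to $D$;
 \item skew-symmetry holds because $[a,b]+[b,a]=\mathrm{d}\langle a,b\rangle$, and this symmetric part vanishes on $V$ modulo the conormal issue handled above, using that $D$ is isotropic.
\end{itemize}

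Finally, the brackets defined on overlapping embedded opens $V$ agree, since they are all computed by restricting the same local Dorfman brackets. They therefore glue to a Lie algebroid structure on $D\to X$.
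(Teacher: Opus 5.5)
\textbf{There is a genuine gap, and your proposed way of closing it fails.} The gap is at the step you yourself single out: well-definedness in the first argument.

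In your first-slot computation the residual term is $\sum_k\langle c_k,b\rangle\,\mathrm{d}f_k|_V$. Here the $c_k$ are arbitrary sections of $\mathbb{T}M$, not of $D$. Isotropy of $D$ therefore gives no control over $\langle c_k,b\rangle$, and the term need not vanish.

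Concretely, let $M=\mathbb{R}^2$, $X=\{y=0\}$ and $D=TX\oplus N^*X=\langle \partial_x,\mathrm{d}y\rangle$. This $D$ is Lagrangian, satisfies $\mathrm{pr}_T(D)=TX$, and is involutive along $X$ (it is $i_!(TX)$). Take $a=y\,\partial_y$, which vanishes on $X$, and $b=\mathrm{d}y$. Then
\begin{align*}
 [a,b]=\mathscr{L}_{y\partial_y}\mathrm{d}y=\mathrm{d}y,
\end{align*}
so $[a,b]|_X=\mathrm{d}y\neq 0=[0,b]|_X$.

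More generally, take $a=f\tau$ with $f|_V=0$ and $\langle\tau,b\rangle=1$ near a point. This shows two things:
\begin{itemize}
 \item involutivity in the sense of Definition~\ref{def : involutive along} forces $N^*V\subset D$ as soon as $D\neq 0$;
 \item the first-slot ambiguity of $[a,b]|_V$ sweeps out all of $N^*V$.
\end{itemize}
So well-definedness fails on $D$, and skew-symmetry holds only modulo $N^*V$. Your hope that ``involutivity already fixes the representative'' is false. The inclusion $D\cap N^*V\subset D\cap D^{\perp}$ is automatic and does not help.

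\textbf{Comparison with the paper.} The paper's proof is exactly your second bullet: locality in the second argument, because $\mathrm{pr}_T(a_1)$ is tangent to $X$. It then makes the same reduction from immersed to embedded pieces that you do. It simply asserts that $[a_1|_X,a_2|_X]:=[a_1,a_2]|_X$ is well defined for arbitrary extensions, without treating the first argument. So the paper does not supply the missing step either, and the example shows it cannot be supplied for $D$ itself.

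\textbf{What your method does prove.} It proves your fallback, once stated as a separate claim.
\begin{itemize}
 \item Since $D\cap N^*V$ is either $0$ or $N^*V$, the quotient $D/(D\cap N^*V)$ is a vector bundle.
 \item Via $u+\xi\mapsto u+i^*(\xi)$ it is identified with an isotropic subbundle of $\mathbb{T}V$.
 \item That subbundle is involutive, because brackets of $i$-related sections are $i$-related.
 \item An isotropic involutive subbundle of $\mathbb{T}V$ is a Lie algebroid.
\end{itemize}
Alternatively, $D$ itself carries a Lie algebroid structure if you only allow extensions that are sections of an ambient isotropic involutive subbundle, e.g.\ $L_M$. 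Then $\langle c_k,b\rangle=0$ and your third term disappears. This is the setting of the Lie subalgebroid $\mathcal{R}_{\pi_M}(E^{\circ})\subset\mathrm{Gr}(\pi_M)$ in Remark~\ref{rem : MR equivalent subalgebroid}.

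Either way, you must change the statement or the class of admissible extensions. The vanishing you ``expect'' is not there to be verified.
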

\begin{proof}
\noindent \emph{Case 1: $i$ is an embedding.} By the Leibniz property, $[a_1,a_2]|_X=0$ if $a_2|_X = 0$. Indeed, it suffices to argue on a neighborhood in $M$ of each point of $X$. There we have a frame $\zeta_1,...,\zeta_{2m}$ of $\mathbb{T}M$, so
\begin{align*}
 [a_1,a_2] = \sum_{i=1}^{2m}[a_1,f_i\zeta_i] = \sum_{i=1}^{2m}f_i[a_1,\zeta_i] + \sum_{i=1}^{2m}\langle \mathrm{d} f_i,\mathrm{pr}_T(a_1)\rangle \zeta_i.
\end{align*}
Because $\mathrm{pr}_T(a_1)$ is tangent to $X$, it follows that, if $f_i|_X = 0$ for all $i=1,...,2m$, then $[a_1,a_2]|_X =0$. This implies that
\begin{align*}
 [a_1|_X,a_2|_X]:=[a_1,a_2]|_X
\end{align*}
gives a well-defined Lie algebra structure on $\Gamma(D)$, and the induced bracket of $D$ can be computed using \emph{arbitrary} extensions. 

\noindent \emph{Case 2: $i$ is an injective immersion.} Because the condition of $D$ being a Lie subalgebroid is local in $X$, it suffices to show that $D|_V \subset \mathbb{T}M|_V$ is a Lie subalgebroid for an open neighborhood $V$ of every point $x \in X$. Choose a connected $V \subset X$ for which $i:V \to M$ is an embedding, and observe that the condition of involutivity along $i$ on $D$ implies that we are in the setting of Case 1, and therefore $D|_V$ is a Lie subalgebroid.
\end{proof}

\begin{definition}\label{def : Dirac along}\normalfont
 A Lagrangian subbundle $D \subset \mathbb{T}M|_X$ is called a {\bf Dirac structure along $i$} if $\mathrm{pr}_T(D) \subset TX$ and $D$ is involutive along $i$.
\end{definition}

\begin{remark}\normalfont
When $X$ is an \emph{embedded} submanifold, the condition of involutivity along $i$ in Definition \ref{def : involutive along} reduces to ``involutivity along $X$'', that is, $\Gamma(\mathbb{T}M,D)$ being a subalgebra. When $X$ is not embedded, involutivity along $i$ is a finer notion, as the example below illustrates.
\end{remark}

\begin{example}\normalfont
 Let $M = \mathbb{R}^5$ and let $X = \mathbb{R}^4 \times \mathbb{Q}$. Define nowhere-vanishing sections $w_1,w_2 \in \Gamma(TM|_X)$ by
 \begin{align*}
  & w_1\left(x_1,x_2,x_3,x_4,\tfrac{p}{q}\right) = \tfrac{\partial}{\partial x_3}+x_2\tfrac{\partial}{\partial x_4},
  & w_2\left(x_1,x_2,x_3,x_4,\tfrac{p}{q}\right) = \tfrac{\partial}{\partial x_1}+q\tfrac{\partial}{\partial x_2},
 \end{align*}
where $\tfrac{p}{q} \in \mathbb{Q}$ is in reduced form\footnote{That is, $p$ and $q > 0$ have no common prime divisor if $p \neq 0$, and $q=1$ if $p=0$.}, and subbundles $F,E \subset TM|_X$ by
\begin{align*}
 & F = \langle w_1 \rangle, & E = \langle w_1,w_2 \rangle.
\end{align*}
Observe that a vector field $v \in \Gamma(TM)$, $v = \sum_{i=1}^5 v_i \tfrac{\partial}{\partial x_i}$ lies in $E$ at $\left(x_1,x_2,x_3,x_4,\tfrac{p}{q}\right)$ iff
\begin{align*}
 v_2\left(x_1,x_2,x_3,x_4,\tfrac{p}{q}\right) & = qv_1\left(x_1,x_2,x_3,x_4,\tfrac{p}{q}\right),\\
 v_4\left(x_1,x_2,x_3,x_4,\tfrac{p}{q}\right) & = x_2v_3\left(x_1,x_2,x_3,x_4,\tfrac{p}{q}\right).
\end{align*}
Fix a reduced fraction $\tfrac{p}{q} \in \mathbb{Q}$ and observe that the sequence
\begin{align*}
 & \tfrac{1+npq}{nq^2}, & n \in \mathbb{N}
\end{align*}
is made of reduced fractions only, and
\begin{align*}
 \tfrac{p}{q} = \lim_{n \to \infty}\tfrac{1+npq}{nq^2}.
\end{align*}
Therefore
\begin{align*}
 v_2\left(x_1,x_2,x_3,x_4,\tfrac{p}{q}\right) & = \lim_{n \to \infty}v_2\left(x_1,x_2,x_3,x_4,\tfrac{1+npq}{nq^2}\right)\\
 & = \lim_{n \to \infty}nq^2v_1\left(x_1,x_2,x_3,x_4,\tfrac{1+npq}{nq^2}\right)
\end{align*}
implies that
\begin{align*}
 v_1\left(x_1,x_2,x_3,x_4,\tfrac{p}{q}\right) = \lim_{n \to \infty}v_1\left(x_1,x_2,x_3,x_4,\tfrac{1+npq}{nq^2}\right) = 0,
\end{align*}
and therefore
\begin{align*}
 v_2\left(x_1,x_2,x_3,x_4,\tfrac{p}{q}\right) = qv_1\left(x_1,x_2,x_3,x_4,\tfrac{p}{q}\right) = 0.
\end{align*}
This implies that $\Gamma(TM,E) = \Gamma(TM,F)$ is a Lie subalgebra. However, for each connected component $Y \subset X$, we have that $\Gamma(TM,E|_Y) \neq \Gamma(TM,F|_Y)$, and $\Gamma(TM,E|_Y)$ is not a Lie subalgebra, since
\begin{align*}
 \left[ \tfrac{\partial}{\partial x_1}+q\tfrac{\partial}{\partial x_2},\tfrac{\partial}{\partial x_3}+x_2\tfrac{\partial}{\partial x_4} \right] = q\tfrac{\partial}{\partial x_4} \not\in \Gamma(TM,E|_Y).
\end{align*}
Said otherwise, $\mathrm{Gr}(E) \subset \mathbb{T}M$ is not a Dirac structure along $i$, even though $\Gamma(TM,E)$ is a Lie subalgebra.
\end{example}

Observe that the operations of Dirac products and of closest Lagrangians \eqref{eq : closest lagrangian} have obvious extensions to the ``along $X$'' setting. For example,
\begin{align*}
 && L[I] = L \star \mathrm{Gr}(TX), && \text{if} && I=N^*X,\\
 && L[I] = L \circledast \mathrm{Gr}(I), && \text{if} && I\subset TM|_X.
\end{align*}

\section{Induced and projectable Dirac structures}

\vspace{0.2cm}
\begin{minipage}{0.9\textwidth}
\begin{mdframed}[backgroundcolor=blue!5]
In this section, we discuss the operations of inducing a Dirac structure by an injective immersion $i:X \to (M,L_M)$, and pushing forward a Dirac structure through a surjective submersion $p:(X,L_X) \to Y$. These operations will find a common generalization (``Dirac reduction'') in the next section.
\end{mdframed} 
\end{minipage}
\vspace{0.2cm}

One of the crucial features of the Dirac formalism is that there is a canonical candidate for a Dirac structure induced on a submanifold by a Dirac structure on the ambient manifold. Here we characterize when that candidate is actually Dirac, emphasizing necessary and sufficient conditions read off from the ambient Dirac structure along the submanifold.

\begin{lemma}\label{lem : induced iff}
 For an injective immersion $i:X \to M$ into a Dirac manifold $(M,L_M)$, the following are equivalent:
 \begin{enumerate}[i)]
  \item $L_M[N^*X]$ is a vector subbundle of $\mathbb{T}M|_X$;
  \item $L_M[N^*X]$ is a Dirac structure along $i$;
  \item $X$ has an induced Dirac structure $i^!(L_M)$.
 \end{enumerate}
\end{lemma}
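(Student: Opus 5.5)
The plan is to relate $L_M[N^*X]$ to the pullback $i^!(L_M)$ directly. For every $x\in X$ we have
\begin{align*}
L_M[N^*X]_x = L_{M,x}\cap (T_xX\oplus T^*_xM) + N^*_xX = i_!\bigl(i^!(L_M)\bigr)_x ,
\end{align*}
where $i^!(L_M)_x=\{u+i^*\xi \ | \ u\in T_xX,\ u+\xi\in L_{M,x}\}$ is the pointwise pullback Lagrangian family. This is checked by unwinding definitions: $u+\xi$ lies in the right-hand side iff $u+i^*\xi\in i^!(L_M)$, iff there is $\eta$ with $u+\eta\in L_M$ and $\xi-\eta\in N^*X$. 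Since $T^*_xX=T^*_xM/N^*_xX$, the maps
\begin{align*}
D &\mapsto i^*D, & L_X &\mapsto i_!(L_X)
\end{align*}
are mutually inverse bijections. They go between Lagrangian subspaces $D\subset \mathbb{T}_xM$ with $N^*_xX\subset D\subset T_xX\oplus T^*_xM$ and Lagrangian subspaces of $\mathbb{T}_xX$. Moreover, they preserve smoothness, because locally $T^*M|_X\to T^*X$ is a surjective bundle map with kernel $N^*X$.

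Given this, (i)$\Leftrightarrow$(iii) at the level of smoothness is immediate: $L_M[N^*X]$ is a subbundle iff $i^!(L_M)$ is. Here ``$X$ has an induced Dirac structure'' is read as $i^!(L_M)$ being smooth and involutive. It then suffices to show that, for smooth $i^!(L_M)$, involutivity of $i^!(L_M)$ on $X$ is equivalent to involutivity of $L_M[N^*X]$ along $i$, and that smoothness alone already forces involutivity.

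For the latter, I would use Example~\ref{ex : i shriek LX} in reverse, by computing brackets locally. Choose a connected open $V\subset X$ on which $i$ is an embedding, and sections $a_1,a_2\in\Gamma(\mathbb{T}M)$ with restrictions to $V$ lying in $D=L_M[N^*X]$. Since $D\subset N^*X^\perp$, the Example on $N^*X^\perp$ shows that $[a_1,a_2]|_V\in N^*X^\perp$. The Leibniz argument of Lemma~\ref{lem : inherits lie alg str} shows that $[a_1,a_2]|_V$ depends only on $a_1|_V,a_2|_V$. Because $D$ is Lagrangian, it remains to check $\langle [a_1,a_2],a_3\rangle|_V=0$, i.e. that the Courant tensor of $D$ along $V$ vanishes.

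The key step is to split a local frame of $D$ into two kinds of sections.
\begin{itemize}
\item Sections of $N^*X$, extended as $f\,\mathrm{d}g$ with $g|_V=0$.
\item Sections of the form $\tilde a$ with $\tilde a|_V\in L_M$. Locally these exist: take a frame of $L_M\cap N^*X^\perp$ (smooth, since its quotient by $L_M\cap N^*X$ is identified with $i^!L_M$), and extend it to sections of $L_M$ near $V$.
\end{itemize}
This extension is the main obstacle. $L_M\cap N^*X^\perp$ is only a subbundle of $L_M|_V$, so a section of it must be extended to a section of $L_M$ over an open set of $M$. This works because $L_M$ is a vector bundle and sections along an embedded submanifold extend locally. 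On these sections, the tensor terms behave as follows.
\begin{itemize}
\item Terms with three $L_M$-sections vanish because $\Upsilon_{L_M}=0$.
\item Terms containing some $\mathrm{d}g$ with $g|_V=0$ reduce, via $[a,\mathrm{d}g]=\mathrm{d}\langle \mathrm{pr}_T a, \mathrm{d}g\rangle$ and $[\mathrm{d}g,a]=-\iota_{\mathrm{pr}_T a}\mathrm{d}\mathrm{d}g=0$, to pairings of exact forms vanishing on $V$ against anchors tangent to $V$. These are zero.
\end{itemize}
Hence (i)$\Rightarrow$(ii).

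Finally, for (ii)$\Rightarrow$(iii), apply Lemma~\ref{lem : inherits lie alg str}. The bracket on $D$ descends to $D/N^*X\cong i^!(L_M)$, and it agrees with the Dorfman bracket on $X$, since pulling back commutes with $\mathcal{L}$ and $\iota$ for vectors tangent to $X$. Therefore $i^!(L_M)$ is involutive, hence Dirac. The implication (iii)$\Rightarrow$(i) is the smoothness correspondence established at the start.
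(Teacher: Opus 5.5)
Your overall route is the paper's. The pointwise identity $L_M[N^*X]=i_!i^!(L_M)$ gives the smoothness equivalence, as in the paper's iii)$\Rightarrow$i). Involutivity is then to be inherited from that of $L_M$ and of $N^*X^{\perp}$.

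The gap is in i)$\Rightarrow$ii), at the claim that $L_M\cap N^*X^{\perp}$ is smooth ``since its quotient by $L_M\cap N^*X$ is identified with $i^!L_M$''. A smooth quotient says nothing about the kernel. $L_M\cap N^*X$ can jump in rank while $i^!(L_M)$ stays smooth, and then $L_M\cap N^*X^{\perp}$, of rank $\dim X+\mathrm{rk}(L_M\cap N^*X)$, is not a vector bundle. This is exactly the regime in which the lemma improves on the classical criterion (item II) of the comparison with the literature), so it cannot be set aside. Worse, near such points the frame you need, made of sections of $N^*X$ and restrictions of sections of $L_M$, need not exist at all: its existence is the split condition of Lemma~\ref{lem : split}. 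Take the example right after the lemma with $f=x_1$, and write $\xi=\sum_j\xi_j\,\mathrm{d}x_j$. If $\pi_M^{\sharp}(\xi)$ is tangent to $X$ along $X$, then $x_1\xi_4=2\xi_2$ on $X\cap\{x_1\neq 0\}$, so by continuity $\xi_2=0$ along $\{x_1=0\}$. All such sections therefore pull back there to covectors in $\mathbb{R}\,\mathrm{d}x_1$. Hence the element of $i^!(L_M)=\mathrm{Gr}(\tfrac{\partial}{\partial x_1}\wedge\tfrac{\partial}{\partial x_2})$ with covector $\mathrm{d}x_2$ is never reached.

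The repair is a density argument. The rank of $L_M\cap N^*X$ is upper semicontinuous, hence locally constant on an open dense $U\subset V$. On $U$, $L_M\cap N^*X^{\perp}=(L_M+N^*X)^{\perp}$ is a genuine subbundle, your frame exists, and your term-by-term computation, which is correct, shows that $\langle[a_1,a_2],a_3\rangle$ vanishes on $U$. This step uses tensoriality, which holds because $D$ is isotropic with anchor in $TX$. Strictly, $[a_1,a_2]|_V$ depends on $a_1|_V$ only modulo $N^*X$, which is harmless after pairing with $D\subset N^*X^{\perp}$. Since $D$ is smooth on all of $V$ by i), for fixed extensions $a_j$ of local sections of $D$ this is a continuous function on $V$ vanishing on a dense set. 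It is therefore identically zero, and Lagrangianity of $D$ gives involutivity along $i$.

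Your ii)$\Rightarrow$iii), via Lemma~\ref{lem : inherits lie alg str} and relatedness of brackets, is fine and more self-contained than the paper's. Note that the paper's one-line i)$\Rightarrow$ii) is equally silent about non-split points. Its cycle is really secured by the fact, quoted in ii)$\Rightarrow$iii), that a smooth pullback of a Dirac structure is Dirac. That fact is what your density-repaired computation proves, and Example~\ref{ex : i shriek LX} then gives iii)$\Rightarrow$ii).
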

\begin{proof}
As in the proof of Lemma \ref{lem : inherits lie alg str}, all three conditions in the statement hold exactly when they hold for every connected open set $V\subset X$ that $i$ embeds into $M$, so we may assume in the course of the proof below that $X$ is embedded.\\

\noindent \emph{i) implies ii)} $L_M[N^*X] = L_M \cap N^*X^{\perp}+N^*X$ is involutive, since
\begin{align*}
  && a,b \in \Gamma(\mathbb{T}M), && a|_X \in \Gamma(N^*X^{\perp}), && b|_X \in \Gamma(N^*X) && \Rightarrow && [a,b]|_X \in \Gamma(N^*X)
 \end{align*}
and $N^*X^{\perp}$ is involutive. \\

\noindent \emph{ii) implies iii)} If $L_M[N^*X]$ is Dirac, then
\begin{align*}
 i^!(L_M[N^*X]) = i^!(L_M)
\end{align*}
is smooth, since the canonical map
\begin{align*}
 & L_M[N^*X] \to i^!(L_M), & u+\xi \mapsto u+i^*(\xi)
\end{align*}
is surjective. As a smooth pullback of a Dirac structure it follows that $i^!(L_M)$ is Dirac. \\

\noindent \emph{iii) implies i)} If $i^!(L_M)$ is Dirac, then
 \begin{align*}
  i_!(i^!(L_M)) & = \{i_*(v) + \eta + \zeta \in \mathbb{T}M \ | \ v + \eta \in L_M, \ \zeta \in N^*X \}\\
  & = L_M \cap N^*X^{\perp}+N^*X\\
  & = L_M[N^*X]
 \end{align*}
is a smooth subbundle (as is the forward image of any Lagrangian subbundle of $\mathbb{T}X$). 
\end{proof}

\begin{remark}\normalfont
The Dirac structure $i^!(L)$ induced on an embedded submanifold $i:X \to M$ by a Dirac structure $L_M$ on $M$ may bear little resemblance to the ambient Dirac structure. The key issue is that a leaf of the induced Dirac structure need not lie inside a leaf of the ambient Dirac structure. These pathologies are discussed in \cite[Section 2]{BFM}, from which we extract the following illustrative example:
\end{remark}

\begin{example}\normalfont
 Let $M=\mathbb{R}^4$ be equipped with the Dirac structure $L_M$ which is the graph of the Poisson structure
 \begin{align*}
  \pi_M = \tfrac{\partial}{\partial x_1} \wedge \tfrac{\partial}{\partial x_2}+x_3\tfrac{\partial}{\partial x_3} \wedge \tfrac{\partial}{\partial x_4}.
 \end{align*}
 Let $f \in C^{\infty}(\mathbb{R}^2)$ be any smooth function, and consider the embedding
 \begin{align*}
  & i:\mathbb{R}^2 \to M, & i(x_1,x_2) = (x_1,x_2,f(x_1,x_2)^2,f(x_1,x_2)^2).
 \end{align*}
Because
\begin{align*}
 i^!(L_M) = \mathrm{Gr}\left(\tfrac{\partial}{\partial x_1} \wedge \tfrac{\partial}{\partial x_2} \right),
\end{align*}
the submanifold $X:=i(\mathbb{R}^2)$ has an induced symplectic structure. However, if $f^{-1}(0)$ is nonempty, $X$ hits more than one leaf of $L_M$.
\end{example}

A wide class of induced Dirac structures for which no such pathologies arise is that of \emph{split} induced structures --- the ones for which sections of the induced structure come from sections of the ambient structure:

\begin{definition}\label{def : splits}
 An injective immersion $i:X \to M$ {\bf splits} a Dirac structure $L_M \subset \mathbb{T}M$ if $i^!(L_M)$ is Dirac on $X$, and every section $a_X \in \Gamma(i^!(L))$ is $i$-related to a section of $L_M$.
\end{definition}

\begin{lemma}[Split condition]\label{lem : split}
For an embedded submanifold $X \subset M$ and a Lagrangian subbundle $L_M \subset \mathbb{T}M$, the following are equivalent:
\begin{enumerate}[i)]
 \item a vector subbundle $D \subset L_M$ exists, such that $L_M[N^*X] = D \oplus N^*X$;
 \item $i^!(L_M)$ is smooth, and every one of its sections is related to a section of $L_M$.
\end{enumerate}
\end{lemma}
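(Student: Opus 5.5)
The plan is to compare $L_M[N^*X]$ with $i^!(L_M)$ through the canonical map
\begin{align*}
 & q: L_M[N^*X] \to i^!(L_M), & u+\xi \mapsto u+i^*(\xi),
\end{align*}
already used in the proof of Lemma \ref{lem : induced iff}. I will first check three facts about $q$. It is well defined, since for $u+\xi\in L_M\cap N^*X^{\perp}$ the vector $u$ is tangent to $X$ and $u+\xi \sim_i u+i^*(\xi)$. It is surjective onto $i^!(L_M)$, by the very definition of $i^!$. Its kernel is exactly $N^*X$: if $u+i^*(\xi)=0$ then $u=0$ and $\xi\in N^*X$, and $N^*X\subset L_M[N^*X]$. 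Hence fibrewise $i^!(L_M)\cong L_M[N^*X]/N^*X$.

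\emph{i) implies ii).} If $L_M[N^*X]=D\oplus N^*X$ with $D\subset L_M|_X$ a subbundle, then $q|_D : D\to i^!(L_M)$ is a fibrewise isomorphism. Indeed, it is injective because $D\cap N^*X=0$, and surjective because $q$ kills $N^*X$. So $i^!(L_M)$ is the image of a smooth bundle under a constant-rank bundle map, hence smooth. A section $a_X$ of $i^!(L_M)$ lifts to the section $d=(q|_D)^{-1}(a_X)\in\Gamma(D)\subset\Gamma(L_M|_X)$. Since $X$ is embedded and $L_M$ is a vector bundle over $M$, $d$ extends to a global section $a\in\Gamma(L_M)$, using local frames of $L_M$ on a neighbourhood of $X$ and a partition of unity. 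By construction $a|_X=d\sim_i a_X$.

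\emph{ii) implies i).} From smoothness of $i^!(L_M)$ we get smoothness of $L_M[N^*X]=i_!(i^!(L_M))$, exactly as in iii) implies i) of Lemma \ref{lem : induced iff}. The task is then to produce a smooth linear splitting $\sigma: i^!(L_M)\to L_M|_X\cap N^*X^{\perp}$ of $q$.
\begin{itemize}
 \item Locally on $U\subset X$, pick a frame $a_1,\dots,a_n$ of $i^!(L_M)$. By hypothesis each $a_k$ is $i$-related to a section $\tilde a_k$ of $L_M$.
 \item Define $\sigma_U$ by sending $a_k\mapsto \tilde a_k|_X$ and extending linearly. Then $q\circ\sigma_U=\mathrm{id}$ on $U$.
 \item Glue with a partition of unity $\{\rho_U\}$ on $X$, setting $\sigma=\sum\rho_U\sigma_U$. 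This is still a splitting, since splittings of $q$ form an affine space and $q\circ\sigma=\sum\rho_U\,\mathrm{id}=\mathrm{id}$. It still takes values in $L_M$, since each $\sigma_U$ does and $L_M$ is a subbundle.
\end{itemize}
Then set $D=\sigma(i^!(L_M))$. This is a smooth subbundle of $L_M|_X$, because $\sigma$ is injective, being a right inverse of $q$. It satisfies $D\cap N^*X=0$, since $q\circ\sigma=\mathrm{id}$ while $q(N^*X)=0$. Finally $D+N^*X=L_M[N^*X]$: given $b\in L_M[N^*X]$, the element $b-\sigma(q(b))$ lies in $\ker q=N^*X$.

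The main obstacle I anticipate is this gluing step. One must avoid using $L_M\cap N^*X^{\perp}$ as a bundle, since it need not have constant rank; $L_M\cap N^*X$ can jump. That is why I will work only with the map $\sigma$, which lands in $L_M|_X$, a genuine bundle. I also need to make sure that the local lifts, which are defined only pointwise on the frame, assemble into smooth bundle maps. This holds because the $\tilde a_k$ are smooth sections.
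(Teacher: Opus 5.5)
Your proposal is correct and follows essentially the same route as the paper. For i)$\Rightarrow$ii) you lift sections of $i^!(L_M)$ through $D$ and extend them. For ii)$\Rightarrow$i) you use a partition of unity to glue local splittings of the canonical map $L_M[N^*X]\to i^!(L_M)$, each obtained by sending a local frame to $i$-related sections of $L_M$; gluing works because the constraint on the image is convex.

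There are two small technical corrections:
\begin{enumerate}
 \item An embedded $X$ need not be closed, so the lifted section $d$ in general extends to a section of $L_M$ only over an open neighbourhood of $X$ in which $X$ is closed, not over all of $M$. This local extension is what the paper uses.
 \item The hypothesis is stated for sections of $i^!(L_M)$ over $X$, so before applying it to a local frame you should multiply that frame by a bump function.
\end{enumerate}
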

\begin{proof}
 If $L_M[N^*X] = D \oplus N^*X$ for some subbundle $D \subset L_M|_X$, then $i^!(L_M)$ is smooth by Lemma \ref{lem : induced iff}. Moreover, every section $a_X$ of $i^!(L_M)$ can be lifted to a section $a_D$ of $D$. Because $D \subset L_M|_X$, the section $a_D$ of $D$ has a local extension to a section $a_M$ of $L_M$ in a neighborhood of $X$. Therefore $a_M \sim a_X$, and so i) implies ii). \\
 
 Conversely, suppose sections of $i^!(L_M)$ are related to sections of $L_M$. We construct a linear splitting $\sigma : i^!(L) \to N^*X$ to the canonical map $i^! : L[N^*X] \to i^!(L)$, with the property that
 \begin{align}\label{eq : convex condition}
  \mathrm{im}(\sigma) \subset N^*X^{\perp}.
 \end{align}
Because condition \eqref{eq : convex condition} is convex, a linear map $\sigma$ as above can be constructed from linear maps
\begin{align*}
 & \sigma_i : i^!(L_M)|_{U_i} \to N^*U_i, & \sigma_i \ \ \text{satisfies} \ \eqref{eq : convex condition}
\end{align*}
defined on the open sets of an open cover $\mathscr{U}=(U_i)$ of $X$, and a partition of unity $(\rho_i)$ subordinated to $\mathscr{U}$:
\begin{align*}
 \sigma = \sum_i \rho_i\sigma_i.
\end{align*}
It suffices then to argue the case where $i^!(L_M)$ has a global frame $a_1,...,a_n \in \Gamma(i^!(L_M))$, in which case $\sigma$ can be uniquely defined by requiring that $\sigma(a_i)=\widetilde{a}_i$ for any $\widetilde{a}_1,...,\widetilde{a}_n \in \Gamma(L_M)$ with $\widetilde{a}_i \sim a_i$.
\end{proof}

We recall next the Dirac version of Libermann's theorem, which provides necessary and sufficient conditions for a Dirac structure to be pushed forward to a Dirac structure by a surjective submersion with connected fibres. The question is: given a surjective submersion with connected fibres $p:X \to Y$, when does a Dirac structure $L_X$ on $X$ push forward under $p$ to a Dirac structure on $Y$ ? That is: when is there a Dirac structure $L_Y$ on $Y$, for which
\begin{align*}
 p : (X,L_X) \to (Y,L_Y)
\end{align*}
is a forward Dirac map?

\begin{lemma}[Libermann's theorem]\label{lem : libermann}
Let $p:X \to Y$ be a surjective submersion with connected fibres, and $L_X$ a Dirac structure on $X$. The following conditions are equivalent:
\begin{enumerate}[i)]
 \item $L_X$ pushes forward under $p$ to a Dirac structure $L_Y$ on $Y$;
 \item $L_X[F] = L_X \circledast \mathrm{Gr}(F)$ is Dirac, where $F \subset TX$ denotes the $p$-vertical foliation,
\end{enumerate}
in which case
\begin{align*}
 p^!(L_Y) = L_X[F].
\end{align*}
\end{lemma}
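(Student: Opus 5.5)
The plan is to prove both implications by working with the Lagrangian family $L_X[F]=L_X\cap F^{\perp}+F$. The central point is the identity $p^!(p_!(L_X)) = L_X[F]$, valid pointwise for any Lagrangian family on $X$. Take $x\in X$ with $y=p(x)$. An element $u+\xi\in\mathbb{T}_xX$ lies in $p^!p_!(L_X)$ exactly when $\xi=p^*\eta$ and some $v+p^*\eta\in L_{X,x}$ satisfies $p_*v=p_*u$, that is, $u-v\in F_x$. Since $p^*T^*_yY=F_x^{\circ}$, this reads $u+\xi\in (L_X\cap (TX\oplus F^{\circ}))+F = L_X\cap F^{\perp}+F$. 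Here $F^{\perp}=TX\oplus F^{\circ}$ because $F\subset TX$. This settles the identity $p^!(L_Y)=L_X[F]$ whenever $L_Y=p_!(L_X)$ exists.

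\emph{i) implies ii).} If $L_Y:=p_!(L_X)$ is a Dirac structure, then $p^!(L_Y)$ is the pullback of a Dirac structure by a submersion. That pullback is automatically smooth, since $p^!(L_Y)\cong p^*L_Y\oplus F$ as bundles, and hence Dirac. By the identity above, $L_X[F]$ is Dirac.

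\emph{ii) implies i).} Assume $L_X[F]$ is Dirac. It contains $F\oplus 0$, so its kernel $K=L_X[F]\cap TX$ contains $F$. I would first show that $L_X[F]$ is invariant along the fibres. Concretely, for any vertical vector field $w\in\Gamma(F)$, the section $w+0$ lies in $\Gamma(L_X[F])$. Involutivity then gives $[w,a]\in\Gamma(L_X[F])$ for all $a\in\Gamma(L_X[F])$. This bracket is $\mathscr{L}_w$ acting on $a$, because the term $\iota_v\mathrm{d}0$ vanishes. So $L_X[F]$ is preserved by flows of vertical vector fields. Since the fibres are connected, any two points of a fibre are joined by compositions of such flows, all of which preserve fibres and commute with $p$.

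Next I would define $L_{Y,y}:=p_!(L_X[F])_x$ for any $x\in p^{-1}(y)$, and check that it equals $p_!(L_X)_x$. That equality is the pointwise identity read in the other direction: pushforward kills $F$, and $L_X\cap F^{\perp}$ is precisely the part of $L_X$ whose covector component descends. The flow invariance gives independence of $x$, because the flow of $w$ covers the identity on $Y$ and sends $p$-related elements to $p$-related elements. Smoothness of $L_Y$ comes from local sections $s:U\subset Y\to X$, which give $L_Y|_U=s^!(L_X[F])$. This is a pullback by a map transverse to $F\subset K$, and transversality to the kernel guarantees a smooth pullback. Finally, $p^!(L_Y)=L_X[F]$ holds by the identity, and $p^!$ of a Lagrangian family is Dirac iff that family is Dirac (a submersion pullback). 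Hence $L_Y$ is Dirac, and by construction $p_!(L_X)=L_Y$.

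The main obstacle I expect is the fibre invariance. The danger is that the flows of vertical fields, acting on $\mathbb{T}X$ by $(\varphi_*,(\varphi^{-1})^*)$, might not preserve $p$-relatedness of covectors. They do, because $\varphi$ satisfies $p\circ\varphi=p$. The step also needs connectedness of the fibres essentially, and it needs care with flows that are only locally defined; I would handle this by a standard chain-of-neighbourhoods argument.
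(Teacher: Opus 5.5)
Your proposal is correct and takes essentially the paper's route: i)$\Rightarrow$ii) goes through the pointwise identity $p^!p_!(L_X)=L_X[F]$, and ii)$\Rightarrow$i) goes through the fibrewise constancy of $p_!(L_X[F]_x)=p_!(L_{X,x})$, which comes from $F\subset L_X[F]$, followed by descent of involutivity to $L_Y$. Your flow argument with the sections $w+0\in\Gamma(L_X[F])$, $w\in\Gamma(F)$, and your local-section argument for the smoothness of $L_Y$ make explicit what the paper delegates to a reference or leaves implicit, and your closing step (a submersion pullback is Dirac iff the base structure is) is the paper's argument via brackets of $p$-related sections.
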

\begin{proof}
 If $p_!(L_X)=L_Y$, then $L_X[F]=p^!p_!(L_X) = p^!(L_Y)$ is Dirac. Conversely, because $F \subset L_X[F]$, if $L_X[F]$ is Dirac, then
 \begin{align*}
 p_!(L_{X,x}) = p_!(L_X[F]_{x}) = p_!(L_X[F]_{x'}) = p_!(L_{X,x'})
 \end{align*}
 for any $x,x' \in X$ in the same $F$-leaf (see \cite[Proposition 1]{FM_Dirac} for details). Thus $p_!(L_X)=p_!(L_X[F])$ defines a Lagrangian subbundle $L_Y$ on $Y$, which is Dirac because any two sections $a_Y,b_Y \in \Gamma(L_Y)$ are $p$-related to sections $a_X,b_X \in \Gamma(L_X[F])$, and therefore
 \begin{align*}
  && [a_X,b_X] \in \Gamma(L_X[F]), && [a_X,b_X] \sim [a_Y,b_Y] && \Longrightarrow && [a_Y,b_Y] \in \Gamma(L_Y).
 \end{align*}
 Hence ii) implies i).
\end{proof}

\section{Dirac reduction}\label{sec : Dirac reduction}

\begin{minipage}{0.9\textwidth}
\begin{mdframed}[backgroundcolor=blue!5]
We characterize the \emph{minimal} reduction scheme, which we call ``Dirac reduction'', and observe it to be \emph{incompatible} with concurrence.
\end{mdframed} 
\end{minipage}
\vspace{0.2cm}

The two basic operations of pulling back a Dirac structure by an injective immersion, and of pushing forward a Dirac structure under a surjective submersion have a common generalization in the notion of \emph{Dirac reduction}, which we discuss next.

\vspace{0.2cm}
\begin{minipage}{0.9\textwidth}
\begin{mdframed}[backgroundcolor=gray!10]
We fix a diagram
\begin{align}\tag{$\mathfrak{T}$}\label{triangle}
 M \stackrel{i}{\lmap} X \stackrel{p}{\rmap} Y
\end{align}
where:
\begin{itemize}
 \item $i:X \to M$ is an injective immersion;
 \item $p:X \to Y$ is a surjective submersion with connected fibres.
\end{itemize}
Denote by
\begin{align*}
 & F \subset TX, & F:=\ker(p_*:TX \to TY)
\end{align*}
the vertical bundle to $p$, and by $I$ the isotropic subbundle
\begin{align*}
 & I \subset \mathbb{T}M|_X, & I:=F \oplus N^*X.
\end{align*}
We say that $L_M$ has a {\bf Dirac reduction} to $Y$ if there is a Dirac structure $L_Y$ on $Y$ such that
\begin{align*}
 p_!i^!(L_M) = L_Y.
\end{align*}
\end{mdframed} 
\end{minipage}

\begin{theorem}[Dirac reduction]\label{thm : Dirac reduction}
 Given a diagram \eqref{triangle} as above, the following conditions on a Dirac structure $L_M$ on $M$ are equivalent:
 \begin{enumerate}[i)]
  \item $L_M$ has a Dirac reduction to $Y$;
  \item $i^!(L_M)[F]$ is Dirac on $X$;
  \item $L_M[I]$ is Dirac along $i$.
 \end{enumerate}
\end{theorem}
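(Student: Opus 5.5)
The plan is to go i) $\Leftrightarrow$ ii) through Libermann's theorem, and ii) $\Leftrightarrow$ iii) through the identity $L_M[I] = i_!\big(i^!(L_M)[F]\big)$, mirroring the proof of Lemma \ref{lem : induced iff}. As in that proof, every condition is local along $X$ on connected open sets $V \subset X$ embedded by $i$. Dirac reduction is local on $Y$ as well, since $p$ has connected fibres and $F$-leaves are the fibres. One subtlety is that a fibre of $p$ need not lie in a single such $V$. I will handle this as in Lemma \ref{lem : libermann}: first show that the Lagrangian family $p_!i^!(L_M)$ is well defined, then check smoothness and involutivity locally.

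\emph{i) $\Leftrightarrow$ ii).} If $L_Y = p_!i^!(L_M)$ is Dirac, then $i^!(L_M)$ is a Lagrangian family and $p^!(L_Y) = p^!p_!\, i^!(L_M) = i^!(L_M)[F]$, which is Dirac as the pullback by a submersion. Conversely, suppose $i^!(L_M)[F]$ is Dirac. I first need $i^!(L_M)$ itself to be a Dirac structure, so that Lemma \ref{lem : libermann} applies. If that is not available, I will instead apply the argument of Lemma \ref{lem : libermann} directly to the Dirac structure $L_X' := i^!(L_M)[F]$. It contains $F$, so $p_!(L_X')$ is constant along $F$-leaves and defines a Dirac structure $L_Y$. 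Moreover $p_!(L_X') = p_!\, i^!(L_M)$, because $(L[F])$ and $L$ have the same forward image under $p$: indeed $p_!(L) = p_!(L\cap F^\perp + F)$, as elements of $F$ map to zero and $p$-related covectors annihilate $F$.

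\emph{ii) $\Leftrightarrow$ iii).} The key linear-algebra step is that for a Lagrangian family $L_X$ on $X$ one has $i_!(L_X[F]) = (i_!L_X)[F]$, the bracket on the right being taken in $\mathbb{T}M|_X$ with $F \subset TX \subset TM|_X$. Combined with $i_!i^!(L_M) = L_M[N^*X]$ from the proof of Lemma \ref{lem : induced iff}, this gives
\begin{align*}
 i_!\big(i^!(L_M)[F]\big) = L_M[N^*X][F] = L_M[I],
\end{align*}
where the last equality is the pointwise check that iterated reduction by orthogonal isotropic pieces ($F \perp N^*X$) equals reduction by their sum $I$. Both identities are fibrewise linear algebra, carried out by unwinding $L \cap I^\perp + I$. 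Then:
\begin{itemize}
 \item ii) $\Rightarrow$ iii): $L_M[I]$ is the forward image of a Dirac structure, hence Dirac along $i$ by Example \ref{ex : i shriek LX}.
 \item iii) $\Rightarrow$ ii): $L_M[I]$ is a smooth Lagrangian subbundle containing $N^*X$ with anchor in $TX$. Since $i^!$ of this bundle is the image of the surjective bundle map $u+\xi \mapsto u + i^*\xi$, the family $i^!(L_M)[F] = i^!(L_M[I])$ is smooth. Involutivity then comes from Lemma \ref{lem : inherits lie alg str}: on an embedded $V$, sections of $i^!(L_M[I])$ lift to sections of $L_M[I]$, extend to $M$, and their Dorfman bracket restricts to $X$ and pushes down compatibly to the bracket on $X$. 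This is the standard fact that $[a,b]\sim[a_X,b_X]$ when $a \sim a_X$ and $b \sim b_X$ along $i$.
\end{itemize}

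The main obstacle I expect is the direction ii) $\Rightarrow$ i) in the non-embedded, globally twisted situation. There one has to make sure that the pointwise definition of $p_!i^!(L_M)$ is independent of the point in the fibre, even though $i^!(L_M)$ itself may fail to be smooth; only its $F$-saturation is assumed Dirac. I will handle this by citing \cite[Proposition 1]{FM_Dirac}, as Lemma \ref{lem : libermann} does, applied to $L_X' = i^!(L_M)[F]$, which contains $F$.
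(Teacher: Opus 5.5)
Your proposal is correct and follows essentially the paper's route. For ii) $\Leftrightarrow$ iii) you use the identity $i_!\big(i^!(L_M)[F]\big)=L_M[I]$ together with Example \ref{ex : i shriek LX} and the lift-and-bracket argument of Lemma \ref{lem : induced iff}; for i) $\Leftrightarrow$ ii) you use $p^!p_! = [F]$ and Lemma \ref{lem : libermann} applied to the Dirac structure $i^!(L_M)[F] \supset F$, which is exactly how the paper uses it. The remaining differences are cosmetic: you derive the key identity via $i_!(L_X[F])=(i_!L_X)[F]$ and $L[N^*X][F]=L[I]$ rather than via $i^!(L_M)[F]=i^!(L_M[F][N^*X])$, and you arrange the argument as two equivalences instead of a cycle.
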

\begin{proof}
If a Dirac structure $L_Y$ on $Y$ exists, such that
\begin{align*}
 L_Y = p_!i^!(L_M),
\end{align*}
then
\begin{align*}
 p^!(L_Y) = p^!p_!i^!(L_M) = i^!(L_M)[F]
\end{align*}
by Lemma \ref{lem : libermann}. Therefore i) implies ii). Now, because
\begin{align*}
i^!(L_M)[F] = i^!(L_M[F]) = i^!(L_M[F][N^*X])= i^!(L_M[I])
\end{align*}
we see that as in Example \ref{ex : i shriek LX} that
\begin{align*}
i_!\left(i^!(L_M)[F]\right) = i_!i^!(L_M[I]) = L_M[I]
\end{align*}
is Dirac along $i$. Thus ii) implies iii). Finally, if $L_M[I] = L_M[F][N^*X]$ is Dirac along $i$, then by Lemma \ref{lem : induced iff},
\begin{align*}
i^!(L_M[F][N^*X]) = i^!(L_M[F]) = i^!(L_M)[F]
\end{align*}
is Dirac on $X$, so again by Lemma \ref{lem : libermann}, a Dirac structure $L_Y$ on $Y$ exists, such that
\begin{align*}
p^!(L_Y) = i^!(L_M)[F] = p^!p_!i^!(L_M),
\end{align*}
and therefore
\begin{align*}
L_Y = p_!i^!(L_M),
\end{align*}
so iii) implies i).
\end{proof}

\begin{example}\normalfont\label{ex : X need not be Dirac}
 We highlight that Dirac reduction under \eqref{triangle} does not assume (or imply) that $X$ itself inherits a Dirac structure from $L_M$. For example, consider the triangle
 \begin{align*}
 \mathbb{R}^2 \stackrel{i}{\lmap} X \stackrel{p}{\rmap} Y=\{ \ast \}
\end{align*}
where $X = \{x \in M \ | \ x_2=0\}$. Then any Dirac structure $L_M$ on $M$ Dirac reduces to the only Dirac structure on the one-point space $Y$. However, $X$ need not inherit a Dirac structure: for example, if
 \begin{align*}
  & L_M=\mathrm{Gr}(\pi_M), & \pi_M = x_1\tfrac{\partial}{\partial x_1} \wedge \tfrac{\partial}{\partial x_2},
 \end{align*}
 then
 \begin{align*}
  i^!(L_M)_x = \begin{cases}
              T_xX, & \text{if} \ x \neq 0;\\
              T^*_xX & \text{if} \ x=0
             \end{cases}
 \end{align*}
and therefore $i^!(L_M)$ is not smooth.
\end{example}

In this paper, we espouse the point of view that ``concurrence'' is the canonical compatibility relation between Dirac structures. A natural question that arises at this point is:
\begin{quotation}
 \emph{Does the Dirac reduction scheme of Theorem \ref{thm : Dirac reduction} respect concurrence? Namely, if two Dirac structures on $M$ concur weakly, and have Dirac reductions to $Y$, do the reduced Dirac structures on $Y$ concur weakly? }
\end{quotation}

That turns out \underline{not} to be the case, as the following example illustrates:

\begin{example}[Dirac reduction does not respect concurrence]\label{ex : reduction does not concur}\normalfont
 Consider on $M=\mathbb{R}^4$ the commuting Poisson structures
 \begin{align*}
  & \pi_L = \tfrac{\partial}{\partial x_1} \wedge \tfrac{\partial}{\partial x_4}, & \pi_R = \left(\tfrac{\partial}{\partial x_2}+x_1\tfrac{\partial}{\partial x_3}\right) \wedge \tfrac{\partial}{\partial x_4}.
 \end{align*}
Let $X \subset M$ be the submanifold given by $x_4=0$. Then
 \begin{align*}
  & i^!\mathrm{Gr}(\pi_L) = \mathrm{Gr}(\tfrac{\partial}{\partial x_1}),  & i^!\mathrm{Gr}(\pi_R) =\mathrm{Gr}(\tfrac{\partial}{\partial x_2}+x_1\tfrac{\partial}{\partial x_3})
 \end{align*}
 do not concur:
 \begin{align*}
  \langle [\tfrac{\partial}{\partial x_1},\tfrac{\partial}{\partial x_2}+x_1\tfrac{\partial}{\partial x_3}],\mathrm{d} x_3-x_1\mathrm{d} x_2\rangle = 1.
 \end{align*}
\end{example}

In the sequel, we will address this lack of compatibility of Dirac reduction with concurrence by devising a more selective reduction scheme under which concurrence is compatible. As in the classical reduction scheme proposed by Marsden and Ra\cb{t}iu in the context of Poisson structures, the generalization we propose to Dirac structures requires that the Dirac reduction be \emph{witnessed} by a certain subbundle $E \subset TM|_X$.

\section{Marsden-Ra\cb{t}iu reduction}

\begin{minipage}{0.9\textwidth}
\begin{mdframed}[backgroundcolor=blue!5]
We recall the Marsden-Ra\cb{t}iu reduction scheme for Poisson structures, and highlight its concurring nature that we will later seek to generalize to Dirac structures.
\end{mdframed} 
\end{minipage}
\vspace{0.2cm}

In the context of Poisson geometry, one reduction scheme was proposed by Jerrold Marsden and Tudor Ra\cb{t}iu in \cite{MR}. It depends on a choice of an {\bf adapted} subbundle to \eqref{triangle} --- that is,
\begin{align}\tag{$\mathfrak{E}$}\label{adapted subbundle}
 & E \subset TM|_X, & F = E \cap TX,
\end{align}
which we think of as an extra structure on \eqref{triangle}:

\begin{theorem}[Marsden-Ra\cb{t}iu reduction]\label{thm : MR}
 Let $(M,\pi_M)$ be a Poisson manifold, and denote by $\mathfrak{g}$ the Lie algebra $C^{\infty}(M)$, equipped with the Poisson bracket of $\pi_M$. If $i$ in \eqref{triangle} is an embedding, and an adapted subbundle \eqref{adapted subbundle} exists, such that {\normalfont
 \begin{enumerate}[MR1)]
  \item $\pi_M^{\sharp}(E^{\circ}) \subset TX + E$, and \label{MR1}
  \item the subset
  \begin{align*}
   & C^{\infty}(M)_E \subset \mathfrak{g}, & C^{\infty}(M)_E = \{ H \in C^{\infty}(M) \ | \ \mathrm{d}H(E) = 0 \}
  \end{align*}
is a Poisson subalgebra, \label{MR2}
 \end{enumerate}
}
then $\pi_M$ Dirac reduces to a Poisson structure $\pi_Y$ on $Y$, and we have that
 \begin{align*}
  p^!(\mathrm{Gr}(\pi_Y)) = i^!\left(\mathrm{Gr}(\pi_M)[E]\right).
 \end{align*}
\end{theorem}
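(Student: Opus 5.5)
Since $i$ is an embedding, we may work on a tubular neighbourhood of $X$ and treat $X$ as closed. By Theorem~\ref{thm : Dirac reduction}, it suffices to show that $i^!(L_M)[F]$ is Dirac on $X$, where $L_M=\mathrm{Gr}(\pi_M)$, and that it is the graph of a bivector pulled back from $Y$. We aim to prove directly that
\begin{align*}
 i^!\left(\mathrm{Gr}(\pi_M)[E]\right) = i^!(\mathrm{Gr}(\pi_M))[F],
\end{align*}
and that this Lagrangian family is smooth and involutive. Lemma~\ref{lem : libermann} then produces $L_Y$ with $p^!(L_Y)$ equal to it. Since $F\subset$ this family and its pushforward contains no vectors (to be checked: $p_*$ of the tangent part of $\mathrm{Gr}(\pi_M)[E]\cap T$ lies in $p_*(E\cap TX)=0$), $L_Y$ is the graph of a Poisson structure $\pi_Y$.

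The first step is linear algebra. We have $\mathrm{Gr}(\pi_M)[E]=\{\pi_M^\sharp\xi+e+\xi \mid \xi\in E^\circ,\ e\in E\}$, which is smooth because $E^\circ$ is a bundle. Pulling back, an element $v+i^*\xi$ occurs iff $\xi\in E^\circ$, $e\in E$ and $v=\pi^\sharp\xi+e\in TX$. MR1 guarantees that for every $\xi\in E^\circ$ such an $e$ exists, unique modulo $E\cap TX=F$. Hence the map $E^\circ\to i^!(\mathrm{Gr}(\pi_M)[E])/F$ is surjective, and we expect a constant rank giving smoothness.

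For the comparison with $i^!(\mathrm{Gr}\,\pi_M)[F]$, both sides are Lagrangian, so it suffices to prove one inclusion. An element of $i^!(\mathrm{Gr}\,\pi_M)\cap F^{\perp}$ has the form $\pi^\sharp\eta+i^*\eta$ with $\pi^\sharp\eta\in TX$ and $i^*\eta\in F^\circ$. It is more convenient to show the reverse inclusion into the right-hand side: for $\xi\in E^\circ$ with $\pi^\sharp\xi+e\in TX$, we check that $\pi^\sharp\xi+e+i^*\xi$ pairs to zero with $i^!(\mathrm{Gr}\,\pi)\cap F^\perp$ and with $F$. Pairing with $F$ vanishes since $F\subset E$. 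Pairing with $\pi^\sharp\eta+i^*\eta$ gives $\xi(\pi^\sharp\eta)+\eta(\pi^\sharp\xi+e)=\eta(e)$. Here $e\in E$, but $\eta$ only annihilates $F$, not $E$, so we need $e\in TX$ (it is, being $v-\pi^\sharp\xi$... only modulo). We plan to fix this by choosing $e$ so that $\pi^\sharp\xi+e\in TX$, which forces $e\in TX+\pi^\sharp(E^\circ)$. We will then have to use skew-symmetry: $\eta(\pi^\sharp\xi)=-\xi(\pi^\sharp\eta)$ and $\xi\in E^\circ$. This linear-algebra compatibility is delicate, and if a direct equality fails we settle for Lagrangian-family containment plus dimension counting.

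The main step is involutivity, using MR2. Local sections of $i^!(\mathrm{Gr}(\pi)[E])$ are generated, modulo $F$, by $i^*\mathrm{d}H$ with $H\in C^\infty(M)_E$, since such differentials span $E^\circ$ pointwise near $X$ when $E$ is extended to a suitable distribution; this extension is the technical obstacle. Their brackets are
\begin{align*}
 [\pi^\sharp \mathrm{d}H+e_H+i^*\mathrm{d}H,\ \pi^\sharp\mathrm{d}K+e_K+i^*\mathrm{d}K]=\ldots+i^*\mathrm{d}\{H,K\},
\end{align*}
and $\{H,K\}\in C^\infty(M)_E$ by MR2, so the bracket stays in the family. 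Vector fields in $F$ are handled by the fact that $F$ is involutive and preserves the family, since $\mathrm{d}H$ is $F$-basic on $X$. Vanishing of the Courant tensor on these generators then yields that $i^!(\mathrm{Gr}(\pi_M)[E])$ is Dirac, which concludes the proof via Lemma~\ref{lem : libermann} and Theorem~\ref{thm : Dirac reduction}.
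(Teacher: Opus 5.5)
\textbf{There is a genuine gap, at the step you yourself call delicate.} The fallback you propose cannot close it.

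Everything hinges on the identity $i^!(\mathrm{Gr}(\pi_M)[E])=i^!(\mathrm{Gr}(\pi_M))[F]$. It is the formula in the statement. It is also the only thing that turns ``$\mathrm{Gr}(\pi_M)[E]$ reduces'' into ``$\pi_M$ Dirac reduces'': Lemma~\ref{lem : libermann} applied to $i^!(\mathrm{Gr}(\pi_M)[E])$ alone says nothing about $p_!i^!(\mathrm{Gr}(\pi_M))$.

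Your pairing computation correctly reduces the needed inclusion to the vanishing of $\eta(e)$. That holds for all admissible $\eta$ only if $e$ can be chosen in $TX$, i.e.\ only if $\pi_M^\sharp(E^\circ)\subset TX$. Both sides are Lagrangian families, so inclusion already means equality, and no dimension count can help.

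Under MR1) alone the identity is in fact false. Take $M=\mathbb{R}^2$, $\pi_M=\tfrac{\partial}{\partial x_1}\wedge\tfrac{\partial}{\partial x_2}$, $X=Y=\{x_2=0\}$, $p=\mathrm{id}$ and $E=\langle\tfrac{\partial}{\partial x_2}\rangle$. Then MR1) holds trivially and $F=0$. Yet $i^!(\mathrm{Gr}(\pi_M)[E])=T^*X$, while $i^!(\mathrm{Gr}(\pi_M))[F]=TX$. Here MR2) fails, since $\{x_1,x_2^2\}=2x_2\notin C^\infty(M)_E$. So MR2) has to enter the linear-algebra step, not only the involutivity step.

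\textbf{The missing idea} is the upgrade the paper imports in Remark~\ref{rem : MR equivalent subalgebroid}: under MR2), condition MR1) is equivalent to MR1$'$), namely $\pi_M^\sharp(E^\circ)\subset TX$. A quick argument:
\begin{itemize}
\item Given $\xi\in E^\circ_x$, take $H\in C^\infty(M)_E$ with $\mathrm{d}H_x=\xi$; this is the spanning property you already invoke.
\item For any $g$ vanishing on $X$, we have $g^2\in C^\infty(M)_E$ and $\mathrm{d}\{H,g^2\}|_X=2\{H,g\}\,\mathrm{d}g$.
\item MR2) then gives $\langle \mathrm{d}g_x,\pi_M^\sharp\xi\rangle\langle \mathrm{d}g_x,e\rangle=0$ for all $e\in E_x$.
\item As $\mathrm{d}g_x$ sweeps $N^*_xX$, which is not a union of two proper subspaces, either $\pi_M^\sharp\xi\in T_xX$, or $E_x\subset T_xX$, in which case MR1) gives the same conclusion.
\end{itemize}

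With MR1$'$) in hand your outline goes through:
\begin{itemize}
\item You may take $e=0$, so the pairing vanishes and $i^!(\mathrm{Gr}(\pi_M)[E])=F+\{\pi_M^\sharp\xi+i^*\xi\mid \xi\in E^\circ\}$, which is visibly smooth.
\item Your generators become $a_H=\pi_M^\sharp\mathrm{d}H+i^*\mathrm{d}H$, and $[a_H,a_K]=a_{\{H,K\}}$, with $\{H,K\}\in C^\infty(M)_E$ by MR2).
\item Brackets with $\Gamma(F)$ stay in the family because $\langle [f,a_H],a_K\rangle=\mathrm{d}\{H,K\}(f)=0$, again by MR2). It is MR2) that does this, not merely $F$-basicness of $\mathrm{d}H$.
\item No extension of $E$ off $X$ is needed.
\end{itemize}

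Completed this way, your route is a hands-on alternative to the paper's. The paper instead writes $\mathrm{Gr}(\pi_M)[I]=F+\mathcal{R}_{\pi_M}(E^\circ)+N^*X$, uses that $\mathcal{R}_{\pi_M}(E^\circ)$ is a Lie subalgebroid over $X$, and then invokes Theorem~\ref{thm : Dirac reduction}. Without MR1$'$), however, your proposal does not prove the statement.
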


\begin{remark}\label{rem : MR equivalent subalgebroid}\normalfont
It was observed in \cite[Lemma 2.2]{FZ} that, under condition \hyperref[MR2]{MR2)}, condition \hyperref[MR1]{MR1)} is equivalent to
\begin{enumerate}[MR1')]
  \item $\pi_M^{\sharp}(E^{\circ}) \subset TX$. \label{MR1'}
 \end{enumerate}
Otherwise said \cite[Remark 2.2]{FZ}, conditions \hyperref[MR1]{MR1)} and \hyperref[MR2]{MR2)} are equivalent to
\begin{align*}
 \mathcal{R}_{\pi_M}(E^{\circ})=\{\pi_M^{\sharp}(\xi)+\xi \ |\ \xi \in E^{\circ}\}
\end{align*}
being a Lie subalgebroid of $\mathrm{Gr}(\pi_M)$ over $X$. 
\end{remark}

\begin{proof}[Proof of Theorem \ref{thm : MR}]
 Because
 \begin{align*}
  \mathcal{R}_{\pi_M}(E^{\circ}) \subset \mathcal{R}_{\pi_M}(F^{\circ}) \cap (N^*X)^{\perp},
 \end{align*}
 by Remark \ref{rem : MR equivalent subalgebroid}, we have that
 \begin{align*}
  \mathrm{Gr}(\pi_M)[E][N^*X] = \mathrm{Gr}(\pi_M)[F][N^*X] = \mathrm{Gr}(\pi_M)[I] = F + \mathcal{R}_{\pi}(E^{\circ})+N^*X
 \end{align*}
 Because $F$ and $N^*X$ are involutive, and $\mathcal{R}_{\pi}(E^{\circ})$ is involutive by Remark \ref{rem : MR equivalent subalgebroid}, it follows from Theorem \ref{thm : Dirac reduction} that $\mathrm{Gr}(\pi_M)$ Dirac reduces to a Dirac structure $L_Y$ on $Y$. This is a Poisson structure because
 \begin{align*}
  i^!(\mathrm{Gr}(\pi_M)) \cap TX = F
 \end{align*}
and therefore $L_Y \cap TY = 0$. This implies that $L_Y$ is the graph of a Poisson structure $\pi_Y$ on $Y$.
\end{proof}

\begin{example}\normalfont
 Much like with Dirac reduction, the conditions for Marsden-Ra\cb{t}iu reduction do not ensure that $X$ itself is Dirac. As an illustration, in Example \ref{ex : X need not be Dirac}, the adapted subbundle $E=TX$ satisfies \hyperref[MR1]{MR1)} and \hyperref[MR2]{MR2)}.
\end{example}

In the taxonomy we propose here, ``Dirac reduction'' as formulated in Theorem \ref{thm : Dirac reduction} consists of a \emph{minimal} reduction scheme --- that is, the least stringent one which allows the transfer of a Dirac structure on the ambient manifold to the quotient of a submanifold. 

A key fact we wish to highlight in this note is that the Marsden-Ra\cb{t}iu reduction is a \emph{concurrent} reduction scheme --- that is, one which respects the compatibility relation of ``concurrence'', a fact which seems to have first been observed in \cite[Proposition 1.1.]{CMP}.

\section{Witnesses of concurrent reduction}\label{eq : Witnesses of concurrent reduction}

\begin{minipage}{0.9\textwidth}
\begin{mdframed}[backgroundcolor=blue!5]
In this section, we propose our generalization of the Marsden-Ra\cb{t}iu reduction scheme to Dirac structures, and discuss the properties it satisfies.
\end{mdframed} 
\end{minipage}
\vspace{0.2cm}

Given a triangle \eqref{triangle} as before,
\begin{align*}
 M \stackrel{i}{\lmap} X \stackrel{p}{\rmap} Y,
\end{align*}
the problem is to propose a reduction scheme that respects concurrence. More precisely, we wish to propose a scheme of reduction for Dirac structures, which meets the following requirements:

\vspace{0.2cm}
\begin{minipage}{0.9\textwidth}
\begin{mdframed}[backgroundcolor=red!10]
\begin{enumerate}[CR1)]
 \item It implies Dirac reduction under \eqref{triangle};
 \item It should boil down to the Marsden-Ratiu reduction when $X$ is embedded and $L_M$ corresponds to a Poisson structure;
 \item It should boil down to Libermann's theorem when $X=M$;
 \item It respects concurrence in the sense that the Dirac reductions under \eqref{triangle} of \emph{concurring} Dirac structures should also concur (perhaps weakly).
\end{enumerate} 
\end{mdframed} 
\end{minipage}
\vspace{0.2cm}

The compatibility condition the current reduction scheme demands is encoded in the following definition:
\vspace{0.2cm}

\begin{minipage}{0.9\textwidth}
 \begin{mdframed}[backgroundcolor=olive!10]
\begin{definition}\normalfont\label{def : witness}
 A subbundle \eqref{adapted subbundle} is a {\bf witness} for a Lagrangian family $L_M$ on $M$ if:
 \begin{enumerate}[\text{Wit}1)]
  \item $L_M[I]$ is a vector bundle, where $I:=F \oplus N^*X$; \label{Wit1}
  \item $L_M[E] \cap N^*X^{\perp}$ is involutive along $i$; \label{Wit2}
  \item $L_M \cap E^{\perp} \subset N^*X^{\perp}$. \label{Wit3}
 \end{enumerate}
\end{definition}
\end{mdframed}
\end{minipage}

\begin{proposition}[CR1 holds]\label{pro : CR1}
 A Dirac structure $L_M$ has a Dirac reduction under \eqref{triangle} if a witness \eqref{adapted subbundle} exists for it.
\end{proposition}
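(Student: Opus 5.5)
The plan is to verify condition iii) of Theorem~\ref{thm : Dirac reduction}: that $L_M[I]$ is a Dirac structure along $i$. Smoothness of $L_M[I]$ is exactly \hyperref[Wit1]{Wit1)}. The inclusion $\mathrm{pr}_T(L_M[I]) \subset TX$ is automatic, because $L_M \cap I^{\perp} \subset N^*X^{\perp} = TX \oplus T^*M|_X$ and the tangent part of $I$ is $F \subset TX$. So everything reduces to involutivity along $i$. For that I will replace $L_M[I]$ by a description built from the subbundle $L_M[E]\cap N^*X^{\perp}$, which is involutive along $i$ by \hyperref[Wit2]{Wit2)}.

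\emph{Step 1 (identification).} I will show
\begin{align*}
 L_M[I] = \left(L_M[E]\cap N^*X^{\perp}\right) + N^*X = L_M[E][N^*X].
\end{align*}
First take $a+e \in L_M[E]$ with $a \in L_M\cap E^{\perp}$, $e\in E$, and suppose $a+e \in N^*X^{\perp}$. By \hyperref[Wit3]{Wit3)} we have $a \in N^*X^{\perp}$, hence $e \in E\cap TX = F$. This gives
\begin{align*}
 L_M[E]\cap N^*X^{\perp} = L_M\cap E^{\perp} + F.
\end{align*}
Since $F \subset E$, we have $E^{\perp}\subset F^{\perp}$. Combined with \hyperref[Wit3]{Wit3)} this yields $L_M\cap E^{\perp}\subset L_M\cap F^{\perp}\cap N^*X^{\perp} = L_M\cap I^{\perp}$. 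Therefore the right-hand side above is contained in $L_M\cap I^{\perp} + I = L_M[I]$. The right-hand side is itself Lagrangian, being of the form $\Lambda\cap N^{\perp}+N$ with $\Lambda=L_M[E]$ Lagrangian and $N=N^*X$ isotropic. Two Lagrangian families, one contained in the other, must coincide, which proves the identity.

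\emph{Step 2 (involutivity).} Work on a connected open $V\subset X$ embedded by $i$, as in the proof of Lemma~\ref{lem : induced iff}. Locally, choose a frame of $L_M[I]$ consisting of sections $d_j$ of $D' := L_M[E]\cap N^*X^{\perp}$ together with sections of $N^*X$. Then check the brackets of ambient extensions pairwise:
\begin{itemize}
 \item for $D'$ with $D'$, closure is \hyperref[Wit2]{Wit2)};
 \item for $N^*X^{\perp}$ with $N^*X$, the bracket lands in $N^*X$ along $X$, exactly as in the proof of Lemma~\ref{lem : induced iff};
 \item any ambient section vanishing on $V$ contributes nothing along $V$ by the Leibniz argument of Lemma~\ref{lem : inherits lie alg str}.
\end{itemize}
Equivalently, the Courant tensor of $L_M[I]$ along $i$ is tensorial, and it vanishes on triples from $D'$ by \hyperref[Wit2]{Wit2)} and on any triple containing an element of $N^*X$. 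With Step 1, this gives condition iii) of Theorem~\ref{thm : Dirac reduction}, and hence a Dirac reduction.

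\emph{Main obstacle.} The delicate point is Step 2: producing local smooth frames adapted to the splitting $D' + N^*X$. This requires $D'$, or at least enough of its sections, to be smooth, whereas Wit2 only asserts involutivity. My first attempt is to read \hyperref[Wit2]{Wit2)} as including smoothness of $D'$ (as the notion ``involutive along $i$'' in Definition~\ref{def : involutive along} presupposes a subbundle). Then $D'\cap N^*X$ has locally constant rank, since it is the kernel of the surjection $D'\oplus N^*X\to L_M[I]$ and both $D'$ and $L_M[I]$ are smooth. Hence local frames adapted to $D'+N^*X$ exist, and the tensorial check above goes through.
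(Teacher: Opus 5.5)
Your proposal is correct and is the paper's own argument: Wit1) gives smoothness of $L_M[I]$, Wit3) gives $L_M[I]=L_M[E][N^*X]=L_M[E]\cap N^*X^{\perp}+N^*X$, Wit2) makes this Dirac along $i$, and condition iii) of Theorem~\ref{thm : Dirac reduction} concludes. Your Lagrangian-containment proof of the identity and your Courant-tensor check make explicit what the paper leaves implicit, and reading Wit2) as presupposing that $L_M[E]\cap N^*X^{\perp}$ is a subbundle matches Definition~\ref{def : involutive along}; if you prefer not to rely on that reading, note that the Courant tensor of $L_M[I]$ is smooth by Wit1) and vanishes on the open dense set where $L_M[E]\cap N^*X^{\perp}$ has locally constant rank, hence vanishes everywhere.
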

\begin{proof}
Let \eqref{adapted subbundle} be a witness for a Dirac structure $L_M$. By \hyperref[Wit1]{Wit1)}, $L_M[I]$ is a vector bundle; and by \hyperref[Wit3]{Wit3)},
\begin{align}\label{eq: wit3 implies}
 L_M[I] = L_M[E][N^*X] = L_M[E] \cap N^*X^{\perp}+ N^*X,
\end{align}
which is Dirac along $i$ by \hyperref[Wit2]{Wit2)}. Hence by Theorem \ref{thm : Dirac reduction}, $L_M$ has a Dirac reduction to a Dirac structure on $Y$.
\end{proof}

\begin{remark}\normalfont
 One could also wonder whether it is possible to do away with the extra choice of adapted subbundle \eqref{adapted subbundle}, and restrict our attention to \eqref{triangle} alone. After all, the reduced Dirac structure only depends on $F$, and not on $E$. It turns out that demanding that $E=F$ be canonical for a Dirac structure $L_M$ is excessively restrictive. For example,
\begin{align*}
 & L_M = \mathrm{Gr}(\pi_M), & \pi_M = \tfrac{\partial}{\partial x_1} \wedge \tfrac{\partial}{\partial x_2} + \tfrac{\partial}{\partial x_3} \wedge \tfrac{\partial}{\partial x_4}
\end{align*}
is a Dirac structure on $M=\mathbb{R}^4$. Consider $X \subset M$ given by $x_3=0=x_4$, and $p:X \to Y=\mathbb{R}$, $p(x_1,x_2,0,0) = x_1$. Then
\begin{align*}
 & L_M[I] = \left\langle \mathrm{d} x_1, \tfrac{\partial}{\partial x_2}, \mathrm{d} x_3, \mathrm{d} x_4\right\rangle,
\end{align*}
so $L_M$ reduces to $L_Y = T^*Y$. Note that
\begin{align*}
 & L_M \cap F^{\perp} = \left\langle \tfrac{\partial}{\partial x_2} + \mathrm{d} x_1, \tfrac{\partial}{\partial x_4} + \mathrm{d} x_3, \tfrac{\partial}{\partial x_3} - \mathrm{d} x_4\right\rangle
\end{align*}
does not satisfy \hyperref[MR1']{MR1')}. However, the vector subbundle
\begin{align*}
 & E \subset TM|_X, & E = \left\langle \tfrac{\partial}{\partial x_2}, \tfrac{\partial}{\partial x_3}, \tfrac{\partial}{\partial x_4}\right\rangle
\end{align*}
does satisfy \hyperref[MR1]{MR1)} and \hyperref[MR2]{MR2)}.
\end{remark}

\begin{proposition}[CR2 holds]\label{pro : CR2}
 When $i$ in \eqref{triangle} is an embedding, \eqref{adapted subbundle} satisfies the MR conditions for $\pi_M$ exactly when \eqref{adapted subbundle} is a witness for $\mathrm{Gr}(\pi_M)$, in which case the reduced Dirac structure is again Poisson.
\end{proposition}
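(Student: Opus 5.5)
The plan is to translate each witness condition for $L_M=\mathrm{Gr}(\pi_M)$ into a statement about $\pi_M$ and $E^{\circ}$, and then match the result with the reformulation of the MR conditions in Remark \ref{rem : MR equivalent subalgebroid}.

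First I compute the relevant Lagrangian families. Since $E\subset TM|_X$, we have
\begin{align*}
L_M\cap E^{\perp}=\mathcal{R}_{\pi_M}(E^{\circ}), \qquad L_M[E]=E+\mathcal{R}_{\pi_M}(E^{\circ}).
\end{align*}
Consequently \hyperref[Wit3]{Wit3)} reads $\pi_M^{\sharp}(E^{\circ})\subset TX$, which is exactly \hyperref[MR1']{MR1')}. Next, an element $u+\pi_M^{\sharp}(\xi)+\xi$ of $L_M[E]$, with $u\in E$ and $\xi\in E^{\circ}$, lies in $N^*X^{\perp}$ iff $u+\pi^{\sharp}_M(\xi)\in TX$. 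Under \hyperref[Wit3]{Wit3)} this forces $u\in E\cap TX=F$, so
\begin{align*}
L_M[E]\cap N^*X^{\perp}=F\oplus \mathcal{R}_{\pi_M}(E^{\circ}).
\end{align*}
As in the proof of Theorem \ref{thm : MR}, this yields $L_M[I]=F+\mathcal{R}_{\pi_M}(E^{\circ})+N^*X$. This family is automatically a vector bundle, because $E^{\circ}$ is a bundle and the sum is direct: elements of $\mathcal{R}_{\pi_M}(E^{\circ})$ have form component determined by $\xi$, and $E^{\circ}\cap N^*X\subset (E+TX)^{\circ}$, so this is a rank count. Hence \hyperref[Wit1]{Wit1)} comes essentially for free once \hyperref[Wit3]{Wit3)} holds.

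For the direction "MR $\Rightarrow$ witness", Remark \ref{rem : MR equivalent subalgebroid} gives \hyperref[MR1']{MR1')} and the fact that $\mathcal{R}_{\pi_M}(E^{\circ})$ is a Lie subalgebroid along $X$. I then need the involutivity of $F\oplus\mathcal{R}_{\pi_M}(E^{\circ})$. Brackets $[F,F]\subset F$ hold by integrability. For the mixed terms, take a section $v$ of $TM$ with $v|_X\in F$ and a section $\pi^{\sharp}\mathrm{d}H+\mathrm{d}H$ with $H\in C^{\infty}(M)_E$ (these locally span, by MR2-type arguments). Then
\begin{align*}
[v,\pi^\sharp \mathrm{d}H+\mathrm{d}H]=[v,\pi^\sharp\mathrm{d}H]+\mathrm{d}(vH).
\end{align*}
I will show that its restriction lies in $F\oplus\mathcal{R}_{\pi}(E^{\circ})$ modulo $\pi$-related corrections, by writing it as $-\mathcal{L}_{\pi^\sharp \mathrm{d}H}v$ plus $(\mathcal{L}_v\pi)^\sharp\mathrm{d}H$. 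This is the main obstacle, and I would try first to avoid the direct computation altogether by using Theorem \ref{thm : MR}, which gives $L_M[I]=\mathrm{Gr}(\pi_M)[I]$ Dirac along $i$. Then $F\oplus\mathcal{R}_{\pi}(E^{\circ})=L_M[I]\cap (E\oplus T^*M|_X)^{\perp}$-type intersection. More directly, it equals $L_M[I]\cap E^{\circ\perp}$-adjusted, and its bracket closure follows from the vanishing of the Courant tensor of $L_M[I]$ together with the closure of $\mathcal{R}(E^{\circ})$. Concretely, $[a,b]$ already lies in $L_M[I]$, and its form component pairs trivially with $E$: for $\mathrm{d}(vH)$ this holds because $v\in F$ preserves $C^{\infty}(M)_E$-type conditions, which is what I must check.

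Conversely, given a witness, \hyperref[Wit3]{Wit3)} gives \hyperref[MR1']{MR1')}, and \hyperref[Wit2]{Wit2)} gives involutivity of $F\oplus\mathcal{R}_{\pi_M}(E^{\circ})$. Intersecting with the graph part, i.e. projecting away $F$ via brackets of sections with $u=0$, shows that $\mathcal{R}_{\pi_M}(E^{\circ})$ is closed: the bracket of two such sections has no component in $F$ beyond what the form part dictates, since the form part lies in $E^{\circ}$ and the element is in $L_M$. By Remark \ref{rem : MR equivalent subalgebroid} this gives MR1) and MR2). Finally, Poissonness of the reduction is exactly as in the proof of Theorem \ref{thm : MR}, using $i^!(\mathrm{Gr}(\pi_M))\cap TX=F$.
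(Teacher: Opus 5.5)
You get Wit3) $\Leftrightarrow$ MR1'), Wit1), and Wit2) $\Rightarrow$ MR2) right, and these follow the paper. For the last one, you bracket two graph sections $\pi_M^\sharp\xi+\xi$, $\pi_M^\sharp\eta+\eta$ and read off that the $F$-component vanishes and that $[\xi,\eta]^{\pi_M}|_X\in E^\circ$.

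The genuine gap is MR $\Rightarrow$ Wit2), at the mixed brackets $[\Gamma(F),\Gamma(\mathcal{R}_{\pi_M}(E^\circ))]$. You call this the main obstacle and stop at ``which is what I must check'', so that implication is not proved.

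Moreover, the check you set yourself is false. For $v|_X\in\Gamma(F)$ and $H\in C^\infty(M)_E$, the function $vH=\langle \mathrm{d}H,v\rangle$ vanishes on $X$, so $\mathrm{d}(vH)|_X$ is conormal. It need not annihilate $E$, however. Take $M=\mathbb{R}^2$ with $\pi_M=0$, $X=Y=\{y=0\}$, $p=\mathrm{id}$ and $E=\mathbb{R}\partial_y$; then $F=0$ and both MR conditions hold trivially. With $v=y\partial_x$ and $H=x$ you get $[v,\mathrm{d}H]=\mathrm{d}y$, which pairs to $1$ with $\partial_y$. So $v$ does not preserve $C^\infty(M)_E$.

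Your route (b) uses that $L_M[I]$ is Dirac along $i$, together with the correct identity $L_M[I]\cap E^\perp=F\oplus\mathcal{R}_{\pi_M}(E^\circ)$. It can only place mixed brackets in $F\oplus\mathcal{R}_{\pi_M}(E^\circ)$ modulo $N^*X$.

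The ingredient the paper uses is on the tangent side. Let $\mathcal{S}\subset\Gamma(E^\circ)$ consist of those $\xi$ with $i^*\xi=p^*\xi'$ for some $\xi'\in\Omega^1(Y)$. Your $\mathrm{d}H|_X$ are of this kind, since $H|_X$ is $F$-basic, and such $\xi$ span $\Gamma(E^\circ)$ over $C^\infty(X)$. Let $\pi_Y$ be the reduced Poisson structure from Theorem~\ref{thm : MR}. Then $\pi_M^\sharp\xi+p^*\xi'\in i^!(\mathrm{Gr}(\pi_M)[E])=p^!\mathrm{Gr}(\pi_Y)$, so $p_*\pi_M^\sharp\xi=\pi_Y^\sharp\xi'$. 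Thus $\pi_M^\sharp\xi|_X$ is $p$-projectable, and $[v,\pi_M^\sharp\xi]|_X\in\Gamma(F)$ for every vertical $v$. The Leibniz rule then extends this from $\mathcal{S}$ to all of $\Gamma(\mathcal{R}_{\pi_M}(E^\circ))$.

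The leftover term $\mathcal{L}_v\xi|_X$ is conormal, because $i^*\mathcal{L}_v\xi=\mathcal{L}_{v|_X}p^*\xi'=0$. The paper drops it when it writes $[v,\pi_M(\xi)+\xi]|_X=[v,\pi_M(\xi)]|_X$. By the example above, this term cannot in general be pushed into $E^\circ$, so the paper also controls the mixed brackets only up to such conormal terms. In that sense your route (b) would already do the job, provided you stop asking the form part to annihilate $E$.

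Two smaller slips:
\begin{enumerate}
\item $F+\mathcal{R}_{\pi_M}(E^\circ)+N^*X$ is not a direct sum, since $(F\oplus\mathcal{R}_{\pi_M}(E^\circ))\cap N^*X\cong E^\circ\cap N^*X=(E+TX)^\circ$. Smoothness still holds, because a pointwise Lagrangian family spanned by smooth sections is a subbundle.
\item For the Poisson claim, the relevant fact is $i^!(\mathrm{Gr}(\pi_M)[E])\cap TX\subset F$. This rests on $\pi_M^\sharp(E^\circ\cap N^*X)\subset E\cap TX=F$, not on $i^!(\mathrm{Gr}(\pi_M))\cap TX=F$.
\end{enumerate}
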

\begin{proof}
Condition \hyperref[MR1']{MR1')} is equivalent to \hyperref[Wit3]{Wit3)}, and under it, \hyperref[Wit1]{Wit1)} is satisfied, because
\begin{align*}
 \mathrm{Gr}(\pi_M)[I] & \stackrel{\eqref{eq: wit3 implies}}{=} \mathrm{Gr}(\pi_M)[E][N^*X]\\
 & = \left(E \oplus \mathcal{R}_{\pi}(E^{\circ})\right)[N^*X]\\
 & = F \oplus \mathcal{R}_{\pi}(E^{\circ}) + N^*X
\end{align*}
and the latter is smooth because $F=E \cap TX$ is smooth, and therefore so is
\begin{align*}
 \left(F \oplus \mathcal{R}_{\pi}(E^{\circ})\right) \cap N^*X = E^{\circ} \cap N^*X.
\end{align*}
It remains to show that \hyperref[MR2]{MR2)} and \hyperref[Wit2]{Wit2)} are equivalent conditions. Under either hypothesis, we have that
\begin{align*}
 i^!(\mathrm{Gr}(\pi_M)[E]) = i^!(\mathrm{Gr}(\pi_M))[F] = p^!(L_Y)
\end{align*}
for a Dirac structure $L_Y$ on $Y$. Because
\begin{align*}
 i^!(\mathrm{Gr}(\pi_M)[E]) \cap TX = \{ v + \pi_M(\xi) \ | \ v \in F, \ \xi \in E^{\circ} \cap N^*X\} \subset F
\end{align*}
since \hyperref[MR1']{MR1')} implies that $\pi_M(E^{\circ} \cap N^*X) \subset F$, we deduce that the Dirac structure $L_Y$ meets $TY$ trivially, and is therefore the graph of a Poisson structure $\pi_Y$ on $Y$. 

Next observe that if $\xi,\eta \in \Omega^1(M)$ are such that $\xi|_X,\eta|_X \in \Gamma(E^{\circ})$, then \hyperref[Wit2]{Wit2)} implies that there exist $v \in \Gamma(F)$ and $\zeta \in \Gamma(E^{\circ})$ for which
\begin{align*}
 [\pi_M(\xi)+\xi,\pi_M(\eta)+\eta] = [\pi_M(\xi),\pi_M(\eta)]+[\xi,\eta]^{\pi_M} = v + \pi_M(\zeta)+\zeta,
\end{align*}
and this in turn implies that $\zeta = [\xi,\eta]^{\pi_M}$ and $v=0$. Therefore \hyperref[Wit2]{Wit2)} $\Rightarrow$ \hyperref[MR2]{MR2)}. To prove the converse implication, let
\begin{align*}
 \mathcal{S} = \{ \xi \in \Gamma(E^{\circ}) \ | \ i^*(\xi) \in p^*\Omega^1(Y) \}.
\end{align*}
Assume \hyperref[MR2]{MR2)} and choose $\xi \in \mathcal{S}$, $i^*(\xi) = p^*(\xi')$, and $v \in \Gamma(F)$. Then
\begin{align*}
 [v,\pi_M(\xi)+\xi]|_X = [v,\pi_M(\xi)]|_X \in \Gamma(F),
\end{align*}
since $p_*\pi_M(\xi)=\pi_Y(\xi')$. Because $\Gamma(E^{\circ})$ is spanned over $C^{\infty}(X)$ by $\mathcal{S}$, it follows from the Leibniz rule for the Dorfman bracket that
\begin{align*}
 [\Gamma(F),\Gamma(\mathcal{R}_{\pi_M}(E^{\circ}))] \subset \Gamma(F) \oplus \Gamma(\mathcal{R}_{\pi_M}(E^{\circ}))
\end{align*}
and so \hyperref[Wit2]{Wit2)} holds.
\end{proof}

\begin{proposition}[CR3 holds]
 If $X=M$, a Dirac structure $L_M$ has a Dirac reduction to $Y$ exactly when the vertical bundle to $p:M \to Y$ is a witness for $L_M$.
\end{proposition}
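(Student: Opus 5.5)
When $X=M$ and $i=\mathrm{id}_M$, the conormal bundle vanishes: $N^*X=0$, so $N^*X^{\perp}=\mathbb{T}M$ and $I=F$. The approach is to specialize each witness condition to this case with $E:=F=\ker(p_*)$. This is legitimate: $E$ is a subbundle of $TM$ with $E\cap TX=F$, so it is an adapted subbundle in the sense of \eqref{adapted subbundle}. The result should then follow from Theorem \ref{thm : Dirac reduction}, or equivalently from Lemma \ref{lem : libermann}.

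The conditions simplify as follows.
\begin{itemize}
 \item \hyperref[Wit3]{Wit3)} asks that $L_M\cap F^{\perp}\subset \mathbb{T}M$, which is automatic.
 \item \hyperref[Wit1]{Wit1)} asks that $L_M[I]=L_M[F]$ be a vector bundle.
 \item \hyperref[Wit2]{Wit2)} asks that $L_M[F]\cap\mathbb{T}M=L_M[F]$ be involutive along $\mathrm{id}_M$.
\end{itemize}
Involutivity along the identity is ordinary involutivity of sections on connected open sets. Since a Dirac structure is closed under the Dorfman bracket locally exactly when it is so globally (the bracket is local), this is just ordinary involutivity. Hence $F$ is a witness for $L_M$ if and only if $L_M[F]$ is a smooth, Lagrangian, involutive subbundle, that is, if and only if $L_M[F]=L_M\circledast\mathrm{Gr}(F)$ is Dirac. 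Lagrangianity is automatic for $L[I]$ with $I$ isotropic, and the condition $\mathrm{pr}_T\subset TX=TM$ in Definition \ref{def : Dirac along} is vacuous.

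It remains to match this with Dirac reduction. By Theorem \ref{thm : Dirac reduction} iii), $L_M$ has a Dirac reduction to $Y$ exactly when $L_M[I]=L_M[F]$ is Dirac along $\mathrm{id}_M$, which is the same as $L_M[F]$ being Dirac. Alternatively, Lemma \ref{lem : libermann} gives this directly, since $i^!(L_M)=L_M$. Both directions follow: a witness gives reduction by Proposition \ref{pro : CR1}, and reduction gives $L_M[F]$ Dirac, which gives Wit1) and Wit2) by the computation above.

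The main point needing care is the identification of ``involutive along $\mathrm{id}_M$'' with ordinary involutivity, in particular the phrase ``for every connected open $V$''. This is handled by the locality of the Dorfman bracket together with the Leibniz rule. Given sections $a,b$ of $\mathbb{T}M$ with $a|_V,b|_V\in\Gamma(L_M[F]|_V)$, the restriction $[a,b]|_V$ depends only on $a|_V,b|_V$ when $V$ is open, so it lies in $L_M[F]$ as soon as $L_M[F]$ is Dirac. Conversely, taking $V=M$ (or each component of $M$) recovers global involutivity.
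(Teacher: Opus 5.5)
Your proposal is correct and follows essentially the same route as the paper: with $N^*X=0$ and $E=F$, Wit3) is vacuous, Wit1) is smoothness of $L_M[F]$ and Wit2) is its involutivity, so $F$ is a witness exactly when $L_M[F]$ is Dirac, which by Lemma \ref{lem : libermann} is equivalent to $p_!(L_M)$ being Dirac. Your extra check that involutivity along $\mathrm{id}_M$ agrees with ordinary involutivity, via locality of the Dorfman bracket, fills in a detail the paper leaves implicit.
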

\begin{proof}
 When $X=M$, we have that:
 \begin{itemize}
  \item $E=F$;
  \item \hyperref[Wit3]{Wit3)} is always met, since $N^*X^{\perp}=\mathbb{T}M|_X$;
  \item \hyperref[Wit1]{Wit1)} is equivalent to $L_M[F]$ being smooth;
  \item \hyperref[Wit2]{Wit2)} is equivalent to $L_M[F]$ involutive.
 \end{itemize}
Therefore $F$ is a witness iff $L_M[F]$ is a Dirac structure, and by Lemma \ref{lem : libermann}, this is the case exactly when $p_!(L_M)$ is a Dirac structure on $Y$.
\end{proof}

\vspace{0.2cm}
\begin{minipage}{0.9\textwidth}
 \begin{mdframed}[backgroundcolor=olive!10]
Our proposed reduction scheme for a Dirac structure $L_M$ on $M$ to reduce under \eqref{triangle} to a Dirac structure on $Y$ is that the ambient Dirac structure have a witness \eqref{adapted subbundle}. 
\end{mdframed}
\end{minipage}
\begin{remark}\normalfont
It must be emphasized that the reduced Dirac structure on $Y$ is the very same as that given by Theorem \ref{thm : Dirac reduction}: because of \eqref{eq: wit3 implies}, the map
\begin{align*}
 & \mathcal{E} : \mathrm{Lag}(\mathbb{T}M) \to \mathrm{Lag}(\mathbb{T}M|_X), & \mathcal{E}(L):=L[E][N^*X].
\end{align*}
coincides with $L_M \mapsto L_M[I]$ if \eqref{adapted subbundle} is an adapted subbundle satisfying \hyperref[Wit3]{Wit3)}. The role of witnesses to the Dirac reduction is to ensure that concurrence is preserved, as we explain in the following section. It should also be pointed out that Dirac reduction does not necessarily admit a witness, as the next example shows.
\end{remark}

\begin{example}\label{reduction_2_forms}\normalfont
Let $\mathscr{F}$ be a foliation on a smooth manifold $Y$, and consider the triangle
\begin{align*}
    M:=T^*Y \stackrel{i}{\lmap} X:=N^*\mathscr{F} \stackrel{p}{\rmap} Y,
\end{align*}
where $i$ stands for the inclusion, and $p$ the canonical projection. Equip these manifolds with the Dirac structures
\begin{align*}
    && L_M:=\mathrm{Gr}(\omega_{\mathrm{can}}),
    && L_X:=\mathrm{Gr}(\omega_{\mathscr{F}}),
    && L_Y:=\mathrm{Gr}(\mathscr{F}),
\end{align*}
where $\omega_{\mathrm{can}}$ is the canonical symplectic form on $M$, and $\omega_{\mathscr{F}}:=i^*(\omega_{\mathrm{can}})$. Then
\begin{align*}
    i:(X,L_X) \to (M,L_M)
\end{align*}
is backward and
\begin{align*}
    p:(X,L_X) \to (Y,L_Y)
\end{align*}
is strong and forward --- namely, every $a_Y \in L_Y$ is $p$-related to a \emph{unique} $a_X \in L_X$ \cite[Example 6]{FM_Dirac}. In particular, $L_M$ has a Dirac reduction to $L_Y$ on $Y$. Note however that this Dirac reduction cannot be witnessed: indeed, by Proposition \ref{pro : CR2}, the reduced Dirac structure on $Y$ would be Poisson if this Dirac reduction were to admit a witness $E$ --- and that only happens if $\mathscr{F}$ is the foliation $L_Y=T^*Y$  by points of $Y$.
\end{example}

\begin{remark}\label{rmk : split on open dense}\normalfont
When $i$ in \eqref{triangle} is an embedding, and a Dirac structure $L_M$ has a witness \eqref{adapted subbundle}, the embedded submanifold $X$ has a Dirac structure induced from $(M,L_M[E])$, and the restriction of $i$ to an open, dense set $U \subset X$ splits it (Definition \ref{def : splits}). Indeed, if $U$ is the maximal open set on which $L_M[E] \cap N^*X$ is smooth, then a smooth subbundle $D \subset L_M[E] \cap N^*U^{\perp}$ exists, such that
 \begin{align*}
  L_M[E][N^*U] = D \oplus N^*U,
 \end{align*}
so we may invoke Lemma \ref{lem : split} over $U$. This allows us to relate certain sections of the reduced structure $L_Y$ to those on $L_M$, as we do in the sequel.
\end{remark}

\section{Compatibility with concurrence}

\begin{minipage}{0.9\textwidth}
\begin{mdframed}[backgroundcolor=blue!5]
In this section, we discuss the main feature of the extension of Marsden-Ra\cb{t}iu reduction to Dirac structures: namely, that it is compatible with (weak) concurrence. 
\end{mdframed} 
\end{minipage}

\begin{theorem}\label{thm : reduction of weakly concurring concur weakly}
 If \eqref{adapted subbundle} is a witness for two weakly concurring Dirac structures $L_M$ and $R_M$ on $M$, then the reduced Dirac structures $L_Y$ and $R_Y$ on $Y$ concur weakly as well.
\end{theorem}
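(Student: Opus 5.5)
The plan is to reduce the weak concurrence of $L_Y$ and $R_Y$ to the weak concurrence of $L_M$ and $R_M$. I will do this by lifting sections through the witness $E$ and comparing Courant tensors. I read "weak concurrence" as the vanishing of the Courant tensor of $L\circledast R$ on pairs of sections (equivalently, involutivity wherever $L\circledast R$ is smooth). I will use the formula of \cite{FMT_Concurrence}, which writes $\Upsilon_{L\circledast R}$ as the sum of $\Upsilon_L$, $\Upsilon_R$ and a mixed term of the form $\langle [a_L,b_R],c\rangle$. Since $L_Y$ and $R_Y$ are Dirac, only the mixed term on $Y$ needs to be controlled.

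The first step is algebraic. Since $L[E]=L\circledast\mathrm{Gr}(E)$ and $\circledast$ is commutative and associative with $\mathrm{Gr}(E)\circledast\mathrm{Gr}(E)=\mathrm{Gr}(E)$, we have
\begin{align*}
 L_M[E]\circledast R_M[E] = (L_M\circledast R_M)[E].
\end{align*}
Combining this with \eqref{eq: wit3 implies} and the identities $L_X:=p^!(L_Y)=i^!(L_M[E])$, $R_X:=p^!(R_Y)=i^!(R_M[E])$ from Theorem \ref{thm : Dirac reduction}, I then use that $\circledast$ is compatible with forward maps:
\begin{align*}
 L_Y\circledast R_Y = p_!(L_X)\circledast p_!(R_X) = p_!(L_X\circledast R_X).
\end{align*}
So the whole question is transported to $X$, and then, via $i$, to the bundles $L_M[E]\cap N^*X^{\perp}$ and $R_M[E]\cap N^*X^{\perp}$.

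The second step is lifting. Let $a_Y,b_Y\in\Gamma(L_Y)$ and $a'_Y,b'_Y\in\Gamma(R_Y)$ be such that each pair $(a_Y,a'_Y)$, $(b_Y,b'_Y)$ shares its cotangent component. Pull these back by $p$ to sections of $L_X$ and $R_X$. Over the open dense set $U\subset X$ of Remark \ref{rmk : split on open dense}, these are $i$-related to sections of $L_M[E]\cap N^*X^{\perp}$ and $R_M[E]\cap N^*X^{\perp}$, which I extend near $X$. The covector parts of the $L$- and $R$-lifts agree only modulo $N^*X$. I would absorb this discrepancy by adding sections of $N^*X$, which are isotropic with respect to everything in $N^*X^{\perp}$ and which leave the pullback to $X$ unchanged. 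Writing an $E$-component of a lift as $e+\xi$ with $e\in E$, I can moreover arrange the lifts to have the form (section of $L_M$ or $R_M$) $+$ (section of $E$). The resulting sums are then sections of $(L_M\circledast R_M)[E]$ lying over $N^*X^{\perp}$.

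The third step computes the mixed term on $M$. Terms involving $E$ are controlled by Wit2: brackets stay in $L_M[E]\cap N^*X^{\perp}$ and $R_M[E]\cap N^*X^{\perp}$ along $i$, so brackets with $\Gamma(E)$ pair trivially with the relevant sections. What remains is the mixed term of $L_M\circledast R_M$, which vanishes by weak concurrence on $M$. Because brackets are compatible with $p$- and $i$-relatedness, the mixed term of $L_Y\circledast R_Y$ then vanishes on $p(U)$. Since $p$ is open and $U$ is dense, $p(U)$ is dense, and continuity gives vanishing on all of $Y$.

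I expect the main obstacle to be this lifting step. One needs lifts of the $L$- and $R$-sections whose cotangent components agree exactly, not merely after applying $i^*$ and modulo $p^*$. One also needs to check that the correction terms in $E\oplus N^*X$ contribute nothing to the mixed term; this is where Wit2 and Wit3 must be combined carefully. If exact matching fails globally, I will fall back on working in local frames on $U$ and using the Leibniz rule, together with tensoriality of the mixed term, to reduce to frames.
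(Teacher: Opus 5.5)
Your outline matches the paper's: lift composable pairs from $Y$ to $X$ with a connection, then along $i$ through $L_M[E]$ and $R_M[E]$, invoke concurrence on $M$, and descend by relatedness and density. Phrasing the descent through Courant tensors is fine. The gap is that the step carrying the whole argument, producing lifts that are \emph{exactly} $\circledast$-composable, is left open, and the reason you give for your partial fix is not the right one.

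Adding sections of $N^*X$ to a section of $L_M[E]$ does not in general stay in $L_M[E]$. Since $E\subset L_M[E]$ and in general $E\not\subset TX$, neither $L_M[E]\subset N^*X^{\perp}$ nor $N^*X\subset L_M[E]$ holds, so isotropy against $N^*X^{\perp}$ is beside the point. What you need is the following:
\begin{itemize}
\item the discrepancy $\xi_L-\xi_R$ lies in $E^{\circ}\cap N^*X$, because both covectors annihilate $E$ (as $L_M[E],R_M[E]\subset E^{\perp}=TM\oplus E^{\circ}$);
\item $E^{\circ}\cap N^*X\subset L_M[E]\cap R_M[E]$, because such a covector pairs trivially with $E$ and, by Wit3), with $L_M\cap E^{\perp}\subset N^*X^{\perp}$, hence lies in $L_M[E]^{\perp}=L_M[E]$, and likewise for $R_M$.
\end{itemize}
Your fallback does not close the remaining hole either. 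Tensoriality of $\Upsilon_{L_M\circledast R_M}$ only applies to genuine sections of $L_M\circledast R_M$ over open subsets of $M$, so working in frames along $X$ still presupposes composable extensions off $X$.

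Here is how the paper fills this. It splits $E=F\oplus C$ and chooses a right inverse $\lambda:T^*X\to C^{\circ}$ of $i^*$. It then lifts the common covector $p^*\xi$ of the $p$-lifts of $a_Y$ and $a'_Y$ by the same $\lambda$ on both the $L$- and $R$-side. This gives exact composability along $X$, and membership in $L_M[E]$ and $R_M[E]$ is precisely the Wit3) fact above.

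Next, on the dense open set where $L_M\cap E$ and $R_M\cap E$ have constant rank, it projects onto $L_M\cap E^{\perp}$ and $R_M\cap E^{\perp}$. By Wit3) this changes only tangent parts, and only by vectors in $E\cap TX=F$. So composability and $p$-relatedness survive, and no $E$-terms remain to be controlled via Wit2). In your version those $E$-components also necessarily lie in $F$, and this is what makes Wit2) applicable to them at all.

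Finally, the paper extends the resulting composable pairs of sections of $L_M|_X$ and $R_M|_X$ to composable sections of $L_M$ and $R_M$ near $X$. It does this in parallel for both, using a tubular neighbourhood $q:V\to X$ and a connection (its Step 5). Only at that point is weak concurrence on $M$ applied.
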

\begin{proof}
Throughout the proof, given sections $a \in \Gamma(L)$ and $b \in \Gamma(R)$, we will say that $a$ and $b$ are $\circledast$-composable if $\mathrm{pr}_{T^*}(a)=\mathrm{pr}_{T^*}(b)$, in which case $a \circledast b$ denotes the section
\begin{align*}
    a \circledast b = a + \mathrm{pr}_T(b) = \mathrm{pr}_T(a) + b \in \Gamma(L \circledast R).
\end{align*}
    
    Let $E$ be a common witness for two Dirac structures $L_M$ and $R_M$ on $M$ which concur weakly. Let $L_Y$ and $R_Y$ be the respective reduced Dirac structures on $Y$ as in Theorem \ref{thm : Dirac reduction}. \\
    
\noindent \emph{Step 0.} We start with the observation that $E \subset TM|_X$ is a witness for two weakly concurring Dirac structures $L_M$ and $R_M$ on $M$ exactly when, for all connected, open set $V \subset X$ that $i$ embeds, we have that $E|_V \subset TM|_V$ is a witness for $L_M$ and $R_M$. It therefore suffices to argue the case where $i$ is an embedding.\\
    
    Assuming that $X$ is an embedded submanifold, our task is to show that, for any $a_Y, b_Y \in \Gamma(L_Y\circledast R_Y)$, the bracket $[a_Y, b_Y] \in \Gamma(L_Y\circledast R_Y)$. Having chosen such sections, we will construct, in a series of steps, sections
    \begin{align*}
        & a_X,b_X  \ \ \text{of} \ \ p^!(L_Y) \circledast p^!(R_Y), & a_M,b_M \ \ \text{of} \ \ L_M \circledast R_M,
    \end{align*}
    with
    \begin{align*}
        && a_X \sim_i a_M, && b_X \sim_i b_M, && a_X \sim_p a_Y, && b_X \sim_p b_Y.
    \end{align*}
    
\noindent \emph{Step 1.} There exists $U_Y$ open and dense in $Y$ such that
\begin{align*}
& a_Y = a_{Y,R}\circledast a_{Y,L}, & b_Y = b_{Y,R}\circledast b_{Y,L}, 
\end{align*}
where 
\begin{align*}
& a_{Y,L}, b_{Y,L} \in \Gamma(L_Y|_{U_Y}), & a_{Y,R}, b_{Y,R} \in \Gamma(R_Y|_{U_Y}).
\end{align*}
To see that, observe that the cotangent product $L_Y \circledast R_Y$ can be described as follows: there is a canonical vector bundle map
\begin{align*}
    & \delta : L_Y \oplus R_Y \to T^*Y, & \delta(a_L,a_R) = \mathrm{pr}_{T^*}(a_L-a_R),
\end{align*}
and $L_Y \circledast R_Y$ is the image of the restriction to $\ker(\delta) \subset L_Y \oplus  R_Y$ of the vector bundle map
\begin{align*}
    & \iota : L_Y \oplus R_Y \to \mathbb{T}Y, & \iota(a_L,a_R) = a_L + \mathrm{pr}_{T}(a_R).
\end{align*}
It then follows that, on the open, dense set $U_Y$ on which $\delta$ has locally constant rank, $L_Y \circledast R_Y$ is smooth (as the image under an injective map of a vector subbundle), and we may use
\begin{align*}
    \iota^{-1} : (L_Y \circledast R_Y)|_{U_Y} \to L_Y|_{U_Y} \oplus R_Y|_{U_Y}
\end{align*}
to construct a map of sections
\begin{align*}
    & \Gamma((L_Y \circledast R_Y)|_{U_Y}) \to \Gamma(L_Y|_{U_Y}) \times \Gamma(R_Y|_{U_Y}), & a_Y \mapsto (a_{Y, L},a_{Y, R}),
\end{align*}
with the property that $\mathrm{pr}_{T^*}(a_{Y,L})=\mathrm{pr}_{T^*}(a_{Y,R})$ and
\begin{align*}
    a_Y = a_{Y,L} \circledast a_{Y,R} := a_{Y,L} + \mathrm{pr}_{T}(a_{Y,R}).
\end{align*}

\noindent \emph{Step 2.} Choose an Ehresmann connection $h: \mathfrak{X}(Y)\to \mathfrak{X}(X).$ Then we can define
\begin{align*}
a_{X,L}^{\dagger}:=h(\mathrm{pr}_T (a_{Y,L}))+p^*(\mathrm{pr}_{T^*}(a_{Y,L}))\in \Gamma(p^{!}(L_Y)|_{U_X}),    
\end{align*}
where $U_X:=p^{-1}(U_Y)$. Observe that, by construction, $a_{X,L}^{\dagger}$ is $p$-related to $a_{Y,L}$. Using the same Ehresmann connection, construct analogously
\begin{align*}
    & b_{X,L}^{\dagger} \in \Gamma(p^{!}(L_Y)|_{U_X}),  & a_{X,R}^{\dagger}, b_{X,R}^{\dagger} \in \Gamma(p^{!}(R_Y)|_{U_X}),  
\end{align*}
respectively $p$-related to the sections $b_{Y,L}$, $a_{Y,R}$ and $b_{Y,R}$.\\

\noindent \emph{Step 3.} Since $E$ is a common witness, we have that 
\begin{align*}
    & i^{!}(L_M[E])=p^{!}(L_Y), & i^{!}(R_M[E])=p^{!}(R_Y).
\end{align*}
Therefore the four sections constructed in Step 2 can be regarded as sections
\begin{align*}
    & a_{X,L}^{\dagger},b_{X,L}^{\dagger} \in \Gamma(i^{!}(L_M[E])|_{U_X}),  & a_{X,R}^{\dagger}, b_{X,R}^{\dagger} \in \Gamma(i^{!}(R_M[E])|_{U_X}).    
\end{align*}
Fix a linear splitting $E=F \oplus C$, and observe that the canonical map $T^*M|_X \to T^*X$ restricts to a surjective map $C^{\circ} \to T^*X$. Let $\lambda : T^*X \to C^{\circ}$ be a linear splitting, and define a map
\begin{align*}
   &\varphi:  \Gamma(\mathbb{T}U_X) \to \Gamma(\mathbb{T}M|_{U_X}), & a \mapsto  \mathrm{pr}_T(a)+\lambda\left(\mathrm{pr}_{T^*}(a)\right),
\end{align*}
and denote with $a_{X,L}^{\ddagger}:=\varphi(a_{X,L}^{\dagger})$ and analogously for the other sections.
Then observe that the sections corresponding to $a_{X,L}^{\dagger},b_{X,L}^{\dagger}, a_{X,R}^{\dagger}, b_{X,R}^{\dagger}$ lie respectively in
\begin{align*}
    & a_{X,L}^{\ddagger},b_{X,L}^{\ddagger} \in \Gamma(L_M[E]|_{U_X}),  & a_{X,R}^{\ddagger}, b_{X,R}^{\ddagger} \in \Gamma(R_M[E]|_{U_X}),
\end{align*}
and are by construction $i$-related to the original sections: $a_{X,L}^{\dagger} \sim_i a_{X,L}^{\ddagger}$ and so on. Furthermore, note that there are well-defined sections
\begin{align*}
    a_{X,L}^{\ddagger} \circledast a_{X,R}^{\ddagger}, \ \ b_{X,L}^{\ddagger} \circledast b_{X,R}^{\ddagger} \ \in \ \Gamma(L_M[E] \circledast R_M[E]|_{U_X}).
\end{align*}

\noindent \emph{Step 4.} Since
\begin{align*}
& L_{M}[E]=L_M\cap E^{\perp}+E & 
R_M[E]=R_M\cap E^{\perp}+E,    
\end{align*}
there exists an open dense subset $U'_X$ in $U_X$ over which $L_M \cap E$ and $R_M \cap E$ are vector subbundles. Therefore one can find linear splittings
\begin{align*}
    & L_M[E]|_{U'_X}=(L_M\cap E^{\perp})|_{U'_X}\oplus C_L, 
    & R_M[E]|_{U'_X}=(R_M\cap E^{\perp})|_{U'_X}\oplus C_R
\end{align*}
where $C_L$ and $C_R$ are vector subbundles of $E|_{U'_X}$. Denote by
\begin{align*}
    & q_L: L_M[E]|_{U'_X}\to (L_M\cap E^{\perp})|_{U'_X}, 
    & q_R: R_M[E]|_{U'_X}\to (R_M\cap E^{\perp})|_{U'_X}
\end{align*}
the canonical projections, and define new sections
\begin{align*}
    & a_{X,L},b_{X,L} \in \Gamma(L_M \cap E^{\perp}|_{U'_X}),  & a_{X,R}, b_{X,R} \in \Gamma(R_M \cap E^{\perp}|_{U'_X})
\end{align*}
by applying $q_L$ to $a_{X,L}^{\ddagger}$ and $b_{X,L}^{\ddagger}$, and $q_R$ to $a_{X,R}^{\ddagger}$ and $b_{X,R}^{\ddagger}$. Observe that
\begin{align*}
    && a_{X,L} \sim_p a_{Y,L}, && b_{X,L} \sim_p b_{Y,L}, && a_{X,R} \sim_p a_{Y,R}, && b_{X,R} \sim_p b_{Y,R},
\end{align*}
and
\begin{align*}
    & a_{X,L} \ \text{and} \ a_{X,R} \ \text{are $\circledast$-composable}, & b_{X,L} \ \text{and} \ b_{X,R} \ \text{are $\circledast$-composable}.
\end{align*}

\noindent \emph{Step 5.} Because $X$ is embedded in $M$, there is a tubular neighborhood for $X$ in $M$, that is, an open set $V \subset M$  and a surjective submersion with connected fibres $q:V \to X$. Let $h' : \mathfrak{X}(X) \to \mathfrak{X}(V)$ be an Ehresmann connection for $q$, whose horizontal distribution, restricted to points of $X$ coincides with $TX\subset TV|_X$. Define as in Step 2 sections
\begin{align*}
    & a_{M,L},b_{M,L} \in \Gamma(L_M|_{U_M}), & a_{M,R},b_{M,R} \in \Gamma(R_M|_{U_M}), 
\end{align*}
where $U_M:=q^{-1}(U'_X)\subset V$. Then observe that
\begin{align*}
    && a_{X,L} \sim_i a_{M,L}, && b_{X,L} \sim_i b_{M,L}, && a_{X,R} \sim_i a_{M,R}, && b_{X,R} \sim_i b_{M,R},
\end{align*}
and
\begin{align*}
    & a_{M,L} \ \text{and} \ a_{M,R} \ \text{are $\circledast$-composable}, & b_{M,L} \ \text{and} \ b_{M,R} \ \text{are $\circledast$-composable}.
\end{align*}

\noindent \emph{Step 6.} Because the bracket of $i$-related sections is $i$-related, we have that 
\begin{align*}
    && [a_M,b_M] \in \Gamma\left( (L_M \circledast R_M)|_{U_M} \right) && \text{implies} && [a_X,b_X] \in \Gamma\left( (p^!(L_Y) \circledast p^!(R_Y))|_{U'_X} \right) 
\end{align*}
where
\begin{align*}
 && a_M:=a_{M,L} \circledast a_{M,R}, && b_M:=b_{M,L} \circledast b_{M,R}, && a_X:=a_{X,L} \circledast a_{X,R}, && b_X:=b_{X,L} \circledast b_{X,R}.
\end{align*}
Likewise, the bracket of $p$-related sections is $p$-related, and so
\begin{align*}
    && [a_X,b_X] \in \Gamma\left( (p^!(L_Y) \circledast p^!(R_Y))|_{U'_X} \right)  && \text{implies} && [a_Y,b_Y]|_{p(U'_X)}\in \Gamma\left( (L_Y \circledast R_Y)|_{p(U'_X)} \right).
\end{align*}
Because $p(U'_X)\subset Y$ is open and dense, we conclude that
\begin{align*}
    [a_Y,b_Y] \text{ is a smooth section of } L_Y \circledast R_Y.
\end{align*}
This concludes the proof.
\end{proof}

\begin{remark}\normalfont\label{rmk : alternative condition}
 There is another natural guess of a reduction scheme for \eqref{triangle}. Given a Dirac structure $L$, suppose that there is an involutive subbundle $C_L \subset \mathbb{T}M|_X$ such that
 \begin{align*}
  L[I] = C_L \oplus I.
 \end{align*}
 This condition ensures that $L[I]$ is a Dirac structure; hence, by Theorem \ref{thm : Dirac reduction}, $L$ reduces 
 to a Dirac structure on $Y$. However, concurrence is \underline{not} respected. For instance, for the Dirac structures $L_M$ and $R_M$ and $I=N^*X$ of Example \ref{ex : reduction does not concur}, we have
 \begin{align*}
  & L[I] = C_L \oplus N^*X, & C_L = \left\langle \tfrac{\partial}{\partial x_1}, \mathrm{d} x_2, \mathrm{d} x_3\right\rangle\\
  & R[I] = C_R \oplus N^*X, & C_R = \left\langle \mathrm{d} x_1, \tfrac{\partial}{\partial x_2}+x_1\tfrac{\partial}{\partial x_3}, x_1\mathrm{d} x_2-\mathrm{d} x_3 \right\rangle,
 \end{align*}
which are involutive. Nevertheless, the reduced (i.e., induced) Dirac structures on $X$ do not concur.
\end{remark}

\section{Reduction by kernels}\label{sec : Reduction by kernels}

\begin{minipage}{0.9\textwidth}
\begin{mdframed}[backgroundcolor=blue!5]
Here we switch our attention to the following question: given concurring Dirac structures $L$ and $R$, are there recipes to construct witnesses $E$ for both structures? Our main contribution to this question is the observation that the kernels $L_M \cap TM$ and $R_M \cap TM$ are simultaneous witnesses whenever smooth. We even conjecture that all (known) recipes are of this kind, in one way or another.
\end{mdframed} 
\end{minipage}
\vspace{0.2cm}

Let $L_M$ be a Dirac structure on $M$, and let
 \begin{align*}
  K(L_M) := L_M \cap TM
 \end{align*} 
denote its kernel. If $K(L_M)$ is a vector bundle, it defines a foliation, and if that foliation is simple, and is defined by a surjective submersion
 \begin{align*}
  p:M \to Y,
 \end{align*} 
then by Lemma \ref{lem : libermann}, $p_!(L_M)$ is a Dirac structure $L_Y$ on $Y$ (since $L_M[K(L_M)]=L_M$), and, in fact, $L_Y$ is the graph of a Poisson structure, since $L_M \cap K(L_M)=K(L_M)$ implies that $L_Y \cap TY = 0$. In short, we may say that

\begin{quotation}
{\it Every Dirac structure reduces by its kernel to a Poisson structure. }
\end{quotation}

Our next main result illustrates a rather surprising fact: that, modulo smoothness issues,

\begin{quotation}
{\it A Dirac structure reduces by the kernel of any Dirac structure with which it concurs.}
\end{quotation}

\begin{theorem}\label{thm : kernel}
 Let $L_M$ and $R_M$ be weakly concurring Dirac structures. Then
 \begin{align*}
  && L_M, && R_M, && \mathrm{Gr}(K(L_M)), && \mathrm{Gr}(K(R_M))
 \end{align*}
all concur weakly, where $\mathrm{Gr}(K(L_M))$ and $\mathrm{Gr}(K(R_M))$ are the involutive Lagrangian families corresponding to
 \begin{align*}
  & K(L_M) = L_M \cap TM, & K(R_M) = R_M \cap TM.
 \end{align*} 
\end{theorem}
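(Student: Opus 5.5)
The plan is to treat the six pairs separately. Four of them are formal consequences of two pointwise observations, and the remaining two reduce to the weak concurrence of $L_M$ and $R_M$ through a lifting argument. Throughout, write $K_L:=K(L_M)$ and $K_R:=K(R_M)$, and recall that weak concurrence means that the Lagrangian family $L\circledast R$ is involutive, in the sense that $\Gamma(L\circledast R)$ is closed under the Dorfman bracket, with no smoothness demanded.

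The first pointwise observation is the standard duality $\mathrm{pr}_{T^*}(L_M)=K_L^{\circ}$, and similarly for $R_M$; it holds because $L_M$ is Lagrangian. The second is that
\begin{align*}
 (L_M\circledast R_M)\cap TM = K_L+K_R ,
\end{align*}
since $u_L+u_R+0$ requires $u_L\in K_L$ and $u_R\in K_R$. Because $L_M\circledast \mathrm{Gr}(K_L)=L_M[K_L]=L_M$, the pairs $(L_M,\mathrm{Gr}(K_L))$ and $(R_M,\mathrm{Gr}(K_R))$ concur trivially, and the pair $(L_M,R_M)$ concurs by hypothesis. For the pair of kernels we have $\mathrm{Gr}(K_L)\circledast\mathrm{Gr}(K_R)=\mathrm{Gr}(K_L+K_R)$. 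The kernel of an involutive Lagrangian family is closed under brackets, since $[u,v]$ is the Lie bracket for $u,v\in\Gamma(TM)$ and involutivity forces it to lie in the family. Applying this to $L_M\circledast R_M$ and using the second observation, sections of $K_L+K_R$ are closed under brackets, so $\mathrm{Gr}(K_L+K_R)$ is involutive.

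The substantive pairs are $(L_M,\mathrm{Gr}(K_R))$ and, symmetrically, $(R_M,\mathrm{Gr}(K_L))$; here we must show that $L_M[K_R]=L_M\cap K_R^{\perp}+K_R$ is involutive. The key is a lifting trick. A section $u+\xi$ of $L_M\cap K_R^{\perp}$ has $\xi\in K_R^{\circ}=\mathrm{pr}_{T^*}(R_M)$, so we may choose $u_R$ with $u_R+\xi\in R_M$. Then $u+u_R+\xi\in L_M\circledast R_M$, and this works locally and smoothly on the open dense set where the relevant ranks are constant. We then compute the needed brackets in two ways, once in $L_M\circledast R_M$ (involutive by hypothesis) and once in $R_M$ (Dirac). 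For example, for $v\in\Gamma(K_R)$ we regard $v$ both as $0\circledast v$ in $L_M\circledast R_M$ and as a section of $R_M$. Involutivity of $L_M\circledast R_M$ gives
\begin{align*}
 [v,u]+[v,u_R]+\mathscr{L}_v\xi = x+y+\eta
\end{align*}
with $x+\eta\in L_M$, $y+\eta\in R_M$ and $\eta=\mathscr{L}_v\xi$. Since $R_M$ is Dirac, $[v,u_R]+\mathscr{L}_v\xi\in R_M$ as well, carrying the same covector, so $y-[v,u_R]\in K_R$. Hence $[v,u]+\mathscr{L}_v\xi\in L_M+K_R$, with covector annihilating $K_R$, so $[v,u+\xi]\in L_M[K_R]$. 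The same computation, with two lifted sections $u+u_R+\xi$ and $w+w_R+\zeta$, handles brackets of two sections of $L_M\cap K_R^{\perp}$. Brackets of two sections of $K_R$ stay in $K_R$ because $K_R$ is involutive, and the Leibniz and anti-symmetry defects are controlled by the isotropy of $L_M[K_R]$.

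The main obstacle I expect is the smoothness bookkeeping rather than the algebra. Sections of $L_M[K_R]$ need not decompose globally as a sum of a section of $L_M\cap K_R^{\perp}$ and a section of $K_R$, and the choice of the lift $u_R$ is not canonical. Both ambiguities lie in $K_R$, or in $K_L+K_R$, and so are absorbed by the argument above. I would carry out the decomposition and the lifting on the open dense set where $K_L$, $K_R$ and $\mathrm{pr}_{T^*}$ restricted to $L_M\circledast R_M$ have locally constant rank, exactly as in Step 1 of the proof of Theorem \ref{thm : reduction of weakly concurring concur weakly}. I would then conclude by density and continuity that the bracket of two smooth sections lies in the (closed, pointwise Lagrangian) family everywhere.
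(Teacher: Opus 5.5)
Your proof is correct, and its skeleton matches the paper's. The pairs $(L_M,\mathrm{Gr}(K(L_M)))$ and $(R_M,\mathrm{Gr}(K(R_M)))$ are trivial. The pair of kernels is handled exactly as in the paper, via $(L_M\circledast R_M)\cap TM=K(L_M)+K(R_M)$. The real content is that brackets of sections of $L_M\cap K(R_M)^{\perp}$, which stay in $L_M$, have covector in $K(R_M)^{\circ}$.

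The difference lies in how concurrence is applied at that step. The paper asserts $L_M\cap K(R_M)^{\perp}\subset L_M\circledast R_M$ and concludes that such brackets lie in $\Gamma(L_M)\cap\Gamma(L_M\circledast R_M)$. That inclusion is not literally true: for commuting Poisson structures with $\pi_R\neq 0$ one has $K(R_M)=0$ and $\mathrm{Gr}(\pi_L)\not\subset\mathrm{Gr}(\pi_L+\pi_R)$. Your lift $u+\xi\mapsto u+u_R+\xi$ is exactly what makes the idea work. The paper also checks only $L_M\cap K(R_M)^{\perp}$ and leaves the mixed brackets implicit. They follow from $\langle[k,a],b\rangle=\langle[a,b],k\rangle$ for $k\in\Gamma(K(R_M))$ and $a,b\in\Gamma(L_M\cap K(R_M)^{\perp})$; you instead compute them directly.

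There are two places to tighten.

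First, the two-section case is not literally ``the same computation''. For lifts $a=u+u_R+\xi$ and $b=w+w_R+\zeta$ the covectors add:
\begin{align*}
 \mathrm{pr}_{T^*}[a,b]=\mathrm{pr}_{T^*}[u+\xi,w+\zeta]+\mathrm{pr}_{T^*}[u_R+\xi,w_R+\zeta].
\end{align*}
So there is no common covector to match against an $R_M$-bracket. What you should argue instead:
\begin{itemize}
 \item the left side lies in $\mathrm{pr}_{T^*}(L_M\circledast R_M)\subset K(R_M)^{\circ}$;
 \item the last term lies in $\mathrm{pr}_{T^*}(R_M)=K(R_M)^{\circ}$;
 \item hence $\mathrm{pr}_{T^*}[u+\xi,w+\zeta]$ lies in $K(R_M)^{\circ}$ as well.
\end{itemize}
Alternatively, use your mixed case together with isotropy: $\langle[u+\xi,w+\zeta],k\rangle=\langle w+\zeta,[k,u+\xi]\rangle=0$.

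Second, $L_M[K(R_M)]$ need not be a closed subset of $\mathbb{T}M$ where $K(R_M)$ jumps rank. Taking $L_M=T^*M$ gives $L_M[K(R_M)]=\mathrm{Gr}(K(R_M))$, which is $T^*M$ where $K(R_M)=0$ and contains $TM$ where $K(R_M)=TM$. So density plus continuity alone does not settle the singular points. The paper's proof, however, is purely formal and ignores smoothness altogether, so this is not a shortfall relative to it.
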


\begin{proof}
Note that
\begin{align*}
 L_M[K(R_M)] & = L_M \cap K(R_M)^{\perp} + K(R_M),
\end{align*}
is involutive if
\begin{align*}
 L_M \cap K(R_M)^{\perp} = \{ u + \xi \in L_M \ | \ \xi \in K(R_M)^{\circ}\}
\end{align*}
is involutive. Now, because $L_M \cap K(R_M)^{\perp} \subset L_M \circledast R_M$, we have that
\begin{align*}
 \left[ \Gamma(L_M \cap K(R_M)^{\perp}),\Gamma(L_M \cap K(R_M)^{\perp}) \right] \subset \Gamma(L_M) \cap \Gamma(L_M \circledast R_M),
\end{align*}
and the latter lies inside $\Gamma(L_M \cap K(R_M)^{\perp})$. Thus $L_M[K(R_M)]$ is involutive, and, symmetrically, $R_M[K(L_M)]$ is also involutive. Finally,
\begin{align*}
 K(L_M)+K(R_M) = \left( L_M \circledast R_M \right) \cap TM
\end{align*}
is involutive because $L_M \circledast R_M$ and $TM$ are involutive.
\end{proof}

\begin{corollary}\label{cor : kernel diamond}
 Let $L_M$ and $R_M$ be weakly concurring Dirac structures on $M$, and suppose there are surjective submersions
\begin{align*}
  \xymatrix{
 & M \ar[dl]_{p_L} \ar[dr]^{p_R} \ar[dd]^r & \\
 M_L \ar[dr]_{q_R} & & M_R \ar[dl]^{q_L}\\
 & M_{LR}
 }
\end{align*}
such that
 \begin{align*}
  && \ker {p_L}_* = K(L_M), && \ker {p_R}_* = K(R_M), && \ker {q_L}_* = \frac{K}{K(L_M)}, && \ker {q_R}_* = \frac{K}{K(R_M)}
 \end{align*}
where $K:=K(L_M)+K(R_M)$. Then $L_M$, $R_M$ push forward, under each of these submersions, to weakly concurring Dirac structures, and
\begin{align*}
 && {p_L}_!(L), && {p_R}_!(R), && {r}_!(L), && {r}_!(R)
\end{align*}
are Poisson.
\end{corollary}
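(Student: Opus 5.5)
The plan is to combine Theorem \ref{thm : kernel} with Libermann's theorem (Lemma \ref{lem : libermann}) and the compatibility of cotangent products with forward maps. There are four pushforward statements to check, one for each submersion and each of $L_M$, $R_M$. By Lemma \ref{lem : libermann}, $L_M$ pushes forward along a submersion with vertical bundle $V$ exactly when $L_M[V]=L_M\circledast\mathrm{Gr}(V)$ is Dirac.

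\emph{The four relevant products.} For $p_L$ we need $L_M[K(L_M)]=L_M$, which is Dirac, and $R_M[K(L_M)]=R_M\circledast\mathrm{Gr}(K(L_M))$, which is Dirac by Theorem \ref{thm : kernel}; symmetrically for $p_R$. For $r$, whose vertical bundle is $K$, note that $K(L_M)\subset L_M$ is isotropic against all of $L_M$, so $L_M\cap K^{\perp}=L_M\cap K(R_M)^{\perp}$. Hence
\begin{align*}
L_M[K]=L_M\cap K(R_M)^{\perp}+K(L_M)+K(R_M)=L_M[K(R_M)],
\end{align*}
where the last equality uses $K(L_M)\subset L_M\cap K(R_M)^{\perp}$. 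Thus $L_M[K]$ is Dirac as well. The maps $q_L,q_R$ then follow by factoring: $(q_R)_!(p_L)_!=r_!$. Alternatively, apply Lemma \ref{lem : libermann} to the pushed-forward structures on $M_L$, using $p_L^!(q_R^!(\cdot))=L_M[K]$.

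\emph{Weak concurrence.} Since $\phi_!(L\circledast R)=\phi_!(L)\circledast\phi_!(R)$ for any forward map, it suffices to show that $L_M\circledast R_M$ itself pushes forward along each submersion. For the vertical bundle $V\in\{K(L_M),K(R_M),K\}$, we have $V\subset K=(L_M\circledast R_M)\cap TM$, so $(L_M\circledast R_M)[V]=L_M\circledast R_M$ is (weakly) Dirac. Libermann's lemma then gives that the pushforward equals $\phi_!(L_M)\circledast\phi_!(R_M)$ and is Dirac.

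\emph{Poisson claims.} The kernel of $(p_L)_!(L_M)$ is the image of $L_M\cap TM$ modulo the vertical bundle, which here is $K(L_M)/K(L_M)=0$. For $r_!(L_M)$, the kernel comes from $L_M[K]\cap TM=K$, hence vanishes in $TM_{LR}$; the same holds for $R_M$. A Dirac structure with trivial kernel is the graph of a Poisson structure.

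The main obstacle is smoothness under \emph{weak} concurrence: the products $L_M[K(R_M)]$ may fail to be smooth away from an open dense set. I would argue that the existence of the submersions forces $K(L_M)$, $K(R_M)$ and $K$ to have constant rank. This makes $L_M\cap K(R_M)^{\perp}$ a subbundle, and then Lemma \ref{lem : libermann} applies, in its weak form where needed.
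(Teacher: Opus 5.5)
Your proposal is correct. The paper leaves this corollary without a written proof, and for the existence of the pushforwards and the Poisson claims you do what it intends: Theorem \ref{thm : kernel} plus Lemma \ref{lem : libermann}, the identity $L_M[K]=L_M[K(R_M)]$, and the kernel computation preceding Theorem \ref{thm : kernel}. The paper uses that identity implicitly in the proof of Theorem \ref{thm : new magri recipe}, where $L\circledast\mathrm{Gr}(K)$ is declared involutive by Theorem \ref{thm : kernel}.

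The genuine difference is the concurrence claim. The paper treats $K(L_M)$, $K(R_M)$ and $K$ as simultaneous witnesses with $X=M$, and gets concurrence downstairs from Theorem \ref{thm : reduction of weakly concurring concur weakly}; this is how Step 5 of the proof of Theorem \ref{thm : new magri recipe} justifies that $\nu_L$ and $\nu_R$ commute. You instead note that every vertical bundle lies in $K=(L_M\circledast R_M)\cap TM$. Hence $L_M\circledast R_M$ itself descends, and its pushforward is $\phi_!(L_M)\circledast\phi_!(R_M)$.

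What each route buys:
\begin{itemize}
\item Your route is more elementary and works at every point, with no passage to open dense sets or splittings as in the proof of Theorem \ref{thm : reduction of weakly concurring concur weakly}. It also shows that concurrence (not just weak concurrence) descends when $L_M\circledast R_M$ is smooth, since Lemma \ref{lem : libermann} then applies to $L_M\circledast R_M$ directly.
\item The paper's route is the one that extends to Corollary \ref{cor : Magri diamond}. There the vertical bundles $E_L,E_R$ are not contained in $(L_M\circledast R_M)\cap TM$: for two Poisson structures that kernel is $0$, while $E_L=\pi_L(\ker\pi_R)$ need not be. So your trick is specific to reduction by kernels.
\end{itemize}

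Two details should be written out. First, in the weak case, well-definedness of $\phi_!(L_M\circledast R_M)$ on the base comes from the forward compatibility of $\circledast$ together with the already established pushforwards of $L_M$ and $R_M$. It does not come from Lemma \ref{lem : libermann}, which assumes smoothness. Involutivity then follows by lifting sections through an Ehresmann connection into $\phi^!\phi_!(L_M\circledast R_M)=(L_M\circledast R_M)[V]=L_M\circledast R_M$, and using that brackets of $\phi$-related sections are $\phi$-related. Second, for smoothness of $L_M[K(R_M)]$, observe that $L_M\cap K(R_M)=K(L_M)\cap K(R_M)$ has constant rank $\mathrm{rk}\,K(L_M)+\mathrm{rk}\,K(R_M)-\mathrm{rk}\,K$.
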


\begin{example}\label{ex : kernel reduction}\normalfont
 Let $M=\mathbb{R}^5$ be equipped with the closed two-forms
 \begin{align*}
  & \omega_L = \mathrm{d}x_2 \wedge \mathrm{d}x_3 + \mathrm{d}x_4 \wedge \mathrm{d}x_5,
  & \omega_R = \mathrm{d}x_1 \wedge \mathrm{d}x_2 + \mathrm{d}x_3 \wedge \mathrm{d}x_4,
 \end{align*}
 and let $L_M$ and $R_M$ denote the respective graphs. They concur since
 \begin{align*}
  L_M \circledast R_M = \left\langle \tfrac{\partial}{\partial x_1},\tfrac{\partial}{\partial x_2}-\tfrac{\partial}{\partial x_4}+\mathrm{d}x_3,\tfrac{\partial}{\partial x_3}-\mathrm{d}x_2, \tfrac{\partial}{\partial x_3}+\mathrm{d}x_4,\tfrac{\partial}{\partial x_5}  \right\rangle
 \end{align*}
 is Dirac. We have a diamond
 \begin{align*}
  & \xymatrix{
 & M \ar[dl]_{p_L} \ar[dr]^{p_R} \ar[dd]^r & \\
 M_{K(L)} \ar[dr]_{q_R} & & M_{K(R)} \ar[dl]^{q_L}\\
 & M_{K(LR)}
 }
 & \xymatrix{
 & (x_1,x_2,x_3,x_4,x_5) \ar[dl] \ar[dr] \ar[dd] & \\
 (x_2,x_3,x_4,x_5) \ar[dr] & & (x_1,x_2,x_3,x_4) \ar[dl]\\
 & (x_2,x_3,x_4)
 }
\end{align*}
and the Dirac structures implied in Corollary \ref{cor : kernel diamond} are
\begin{align*}
 & L_{M_{K(L)}} = \mathrm{Gr}(\mathrm{d}x_2 \wedge \mathrm{d}x_3 + \mathrm{d}x_4 \wedge \mathrm{d}x_5), & R_{M_{K(L)}} = \left\langle \mathrm{d}x_2,\tfrac{\partial}{\partial x_3}+\mathrm{d}x_4, \tfrac{\partial}{\partial x_4}-\mathrm{d}x_3 , \tfrac{\partial}{\partial x_5}\right\rangle\\
 & L_{M_{K(R)}} = \left\langle \tfrac{\partial}{\partial x_1},\tfrac{\partial}{\partial x_2}+\mathrm{d}x_3, \tfrac{\partial}{\partial x_3}-\mathrm{d}x_2 , \mathrm{d}x_4\right\rangle, & R_{M_{K(R)}} = \mathrm{Gr}(\mathrm{d}x_1 \wedge \mathrm{d}x_2 + \mathrm{d}x_3 \wedge \mathrm{d}x_4)\\
 & L_{M_{K(LR)}} = \mathrm{Gr}\left( \tfrac{\partial}{\partial x_3} \wedge \tfrac{\partial}{\partial x_2} \right), & R_{M_{K(LR)}} = \mathrm{Gr}\left( \tfrac{\partial}{\partial x_4} \wedge \tfrac{\partial}{\partial x_3} \right). 
\end{align*}
Clearly,
\begin{align*}
L_{M_{K(LR)}} \circledast R_{M_{K(LR)}} = \mathrm{Gr}\left( \left(\tfrac{\partial}{\partial x_4}-\tfrac{\partial}{\partial x_2}\right) \wedge \tfrac{\partial}{\partial x_3} \right). 
\end{align*}
is again Dirac.
\end{example}

\begin{remark}\normalfont
 To illustrate why concurrence is such a strong compatibility condition, suppose two closed two-forms $\omega_L,\omega_R \in \Omega^2(M)$ concur weakly, and that $\ker(\omega_R)$ is a vector bundle (and thus, a foliation). Then $\omega_L$ and $\omega_{\epsilon}:=\epsilon\omega_L+\omega_R$ concur weakly, for all $\epsilon \in \mathbb{R}$; hence by Theorem \ref{thm : kernel}, wherever smooth, $\ker(\omega_{\epsilon})$ is a witness $\omega_L$. This indicates that concurring, closed two-forms should be in short supply, and is reminiscent of the results in, e.g.,\cite{Turiel}.
\end{remark}

\section{Magri's recipe}\label{sec : Magri recipe}

In \cite{CMP}, the authors consider two commuting Poisson structures $\pi_L$ and $\pi_R$ on a manifold $M$, and take $X \subset M$ to be a leaf of $\pi_R$. Then they show that, if
 \begin{align*}
E_L:=\pi_L(\ker \pi_R)|_X \subset TM|_X
 \end{align*}
and $E_L \cap TX$ have locally constant rank, then $E_L$ is a witness for both $\pi_L$ and $\pi_R$. In fact we can also take $X=M$ \cite{Costa_Marle} and state, more symmetrically, that (under the same constant rank conditions),
 \begin{align}\label{eq : Magri original recipe}
& E_L:=\pi_L(\ker \pi_R), & E_R:=\pi_R(\ker \pi_L)
 \end{align}
are simultaneously witnesses for $\pi_L$ and $\pi_R$. We refer to \eqref{eq : Magri original recipe} as \emph{Magri's original recipe}. Below, we generalize Magri's original recipe to general weakly concurring Dirac structures. 

Consider the function of Lagrangian families
\begin{align*}
  \mathscr{N} & : \mathrm{Lag}(\mathbb{T}M) \times \mathrm{Lag}(\mathbb{T}M) \to \mathrm{Lag}(\mathbb{T}M), \\ \mathscr{N}(L,R)& :=L \star \left( R \circledast \mathcal{R}_{-1}(L) \right).
 \end{align*}

\begin{definition}\label{def: Magri's recipe}\normalfont
 We call {\bf Magri's recipe} the function
 \begin{align*}
  \mathscr{M} & : \mathrm{Lag}(\mathbb{T}M) \times \mathrm{Lag}(\mathbb{T}M) \to \mathrm{Lag}(\mathbb{T}M), \\ \mathscr{M}(L,R)& := \mathrm{Gr}(K(\mathscr{N}(L,R))).
 \end{align*}
\end{definition}

\begin{example}\normalfont
When $L$ and $R$ are, respectively, the graphs of Poisson structures $\pi_L,\pi_R \in \mathfrak{X}^2(M)$, then
\begin{align*}
 \mathscr{N}(L,R) & = \mathrm{Gr}(\pi_L) \star \mathrm{Gr}(\pi_R-\pi_L)\\
 & = \{\pi_L(\xi)+\xi \ | \ \xi \in T^*M\} \star \{\pi_R(\eta)-\pi_L(\eta)+\eta \ | \ \eta \in T^*M\}\\
 & = \{\pi_L(\xi)+\xi+\eta \ | \ \xi,\eta \in T^*M, \ \pi_L(\xi+\eta) = \pi_R(\eta)\}.
\end{align*}
Therefore $\mathscr{M}(L,R)$ is the graph of
\begin{align*}
 E_L & = \{\pi_L(\xi) \ | \ \xi \in T^*M, \ \pi_R(\xi)=0\} = \pi_L(\ker \pi_R).
\end{align*}
Hence Magri's recipe $\mathscr{M}(L,R)$ coincides with (the graph of) Magri's original recipe \eqref{eq : Magri original recipe} when the Dirac structures in question are Poisson.
\end{example}

\begin{theorem}\label{thm : new magri recipe}
 Let $L$ and $R$ be weakly concurring Dirac structures on $M$. Then the Magri recipes
 \begin{align*}
  & \mathscr{M}(L,R), & \mathscr{M}(R,L)
 \end{align*}
 are involutive, and
 \begin{align*}
  && L, && R, && \mathscr{M}(L,R), && \mathscr{M}(R,L)
 \end{align*}
concur weakly pairwise.
\end{theorem}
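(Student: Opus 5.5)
The plan is to exhibit $\mathscr{N}(L,R)$ (and symmetrically $\mathscr{N}(R,L)$) as a Dirac structure, at least where smooth, that concurs weakly with both $L$ and $R$. Then Theorem \ref{thm : kernel}, applied to the pairs $(\mathscr{N}(L,R),L)$ and $(\mathscr{N}(L,R),R)$, gives that $\mathrm{Gr}(K(\mathscr{N}(L,R)))=\mathscr{M}(L,R)$ is involutive and concurs weakly with $L$ and with $R$. It remains to check that $\mathscr{M}(L,R)$ and $\mathscr{M}(R,L)$ concur with each other. Since both are graphs of distributions, their cotangent product is the graph of the sum $E_L+E_R$. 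So this last step amounts to the involutivity of $E_L+E_R$, which I will try to obtain as $(\mathscr{N}(L,R)\circledast\mathscr{N}(R,L))\cap TM$, mimicking the last line of the proof of Theorem \ref{thm : kernel}.

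First, I make the objects explicit. Reading $\mathcal{R}_{-1}$ as the rescaling $u+\xi\mapsto u-\xi$, so that it maps $\mathrm{Gr}(\pi)$ to $\mathrm{Gr}(-\pi)$ as in the Poisson example, I compute
\begin{align*}
 R\circledast\mathcal{R}_{-1}(L)=\{u_R+u_L+\xi \ | \ u_R+\xi\in R,\ u_L-\xi\in L\}.
\end{align*}
I then unwind $\mathscr{N}(L,R)$ and its kernel pointwise, linearly as in the Poisson example. The target is a description of $E_L=K(\mathscr{N}(L,R))$ of the form $\{u_L \ | \ u_L+\xi\in L,\ \xi+u_R\in R,\ u_R\in K(\cdot)\}$, up to signs. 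This should exhibit $E_L$ as the projection of elements of $L\circledast R$ lying in a kernel-type subspace. Such a description would let me prove the concurrence statements directly, in the same style as Theorem \ref{thm : kernel}: showing $L[E_L]$ and $R[E_L]$ involutive reduces to showing that $L\cap E_L^{\perp}$ and $R\cap E_L^{\perp}$ are closed under brackets. For that I would use $\Gamma(L)\cap\Gamma(L\circledast R)$-type intersection arguments, with brackets of sections of $L\circledast R$ staying in $L\circledast R$.

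Second, as a structural alternative, I would prove a pencil lemma: if $L$ and $R$ concur weakly, then so do $\mathcal{R}_s(L)$ and $\mathcal{R}_t(R)$ for all scalars $s,t$. This should follow by checking that the Courant tensor of $\mathcal{R}_s(L)\circledast\mathcal{R}_t(R)$ is a combination of the tensors of $L$, $R$ and $L\circledast R$, as in the bilinear expansion of $[\pi_L+\pi_R,\pi_L+\pi_R]$. It would give that $R\circledast\mathcal{R}_{-1}(L)$ is Dirac. Then $\mathscr{N}(L,R)$, as a tangent product of Dirac structures, is Dirac wherever smooth, by the fact recalled from \cite{FMT_Concurrence}.

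The main obstacle is the concurrence of $\mathscr{N}(L,R)$ with $L$ and $R$, equivalently the involutivity of $L[E_L]$ and $R[E_L]$. This is where the Poisson computation $[\pi_L,\pi_R]=0$ is genuinely used, and where smoothness has to be handled. Weak concurrence only requires control on an open dense set, so I would work on the open dense locus where all relevant ranks are locally constant. There I would verify the bracket conditions by the section-level argument above, and extend by density, as in Step 6 of the proof of Theorem \ref{thm : reduction of weakly concurring concur weakly}.
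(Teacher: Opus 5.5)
Your preliminary steps are sound and match the paper's opening. The pencil lemma gives involutivity of $R\circledast\mathcal{R}_{-1}(L)$, hence of the tangent product $\mathscr{N}(L,R)$ and of its kernel $E_L$. The identity $(A\circledast B)\cap TM=K(A)+K(B)$ correctly reduces the last pair to involutivity of $E_L+E_R$.

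The gap is that the statements making up the theorem are never proved: involutivity of $L[E_L]$ and $R[E_L]$ (and symmetrically), and of $E_L+E_R$. You defer them to ``the main obstacle'' with only an unspecified section-level argument, and neither mechanism you sketch closes this gap.

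First, concurrence of $\mathscr{N}(L,R)$ with $L$ is not equivalent to involutivity of $L[E_L]$. By Theorem \ref{thm : kernel} it implies it, but it is strictly stronger. Take $L=\mathrm{Gr}(\pi_L)$, $R=\mathrm{Gr}(\pi_R)$ commuting, with $\pi_R$ symplectic, and set $N=\pi_L\pi_R^{-1}$.
\begin{itemize}
\item Then $E_L=\pi_L(\ker\pi_R)=0$, so $L[E_L]=L$ is trivially involutive.
\item By contrast, $\mathscr{N}(L,R)=\mathrm{Gr}(N\pi_R-N^2\pi_R)$, and its concurrence with $\mathrm{Gr}(\pi_L)$ is the genuinely differential fact $[N\pi_R,N^2\pi_R]=0$ from the Poisson--Nijenhuis hierarchy.
\end{itemize}
So routing through $\mathscr{N}$ commits you to stronger claims, including concurrence of $\mathscr{N}(L,R)$ with $\mathscr{N}(R,L)$ for the last pair, for which you offer nothing.

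Second, the intersection trick of the kernel theorem would need a containment such as $L\cap E_L^\perp\subset L\circledast R$ or $L\cap E_L^\perp\subset L\circledast\mathscr{N}(L,R)$. Both fail in the same example: $L\cap E_L^\perp=\mathrm{Gr}(\pi_L)$, while these products are $\mathrm{Gr}(\pi_L+\pi_R)$ and $\mathrm{Gr}(2\pi_L-\pi_L\pi_R^{-1}\pi_L)$.

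What is missing is an explicit description of $E_L$ followed by actual bracket computations. On the open dense regular set, write $L=\mathcal{R}_{\Pi_L}\mathrm{Gr}(K(L))$, $R=\mathcal{R}_{\Pi_R}\mathrm{Gr}(K(R))$ and $K=K(L)+K(R)$. Then:
\begin{itemize}
\item $E_L^\circ=\{\xi\in K(L)^\circ \mid \Pi_L\xi\in K+\Pi_R(K^\circ)\}$;
\item $L\cap E_L^\perp=K(L)+\mathcal{R}_{\Pi_L}(E_L^\circ)$;
\item $R\cap E_L^\perp=K(R)+\mathcal{R}_{\Pi_R}(K(R)^\circ\cap E_L^\circ)$.
\end{itemize}
Closure of $L\cap E_L^\perp$ follows from involutivity of $L$, $L[K]$ and $R[K]$. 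Closure of $R\cap E_L^\perp$ additionally needs the mixed identity coming from involutivity of $L\circledast R\circledast\mathrm{Gr}(K)$, an input your proposal never uses.

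For $E_L+E_R$, the paper pushes $L$ and $R$ forward along $r$ with $\ker r_*=K$ to commuting Poisson structures $\nu_L,\nu_R$. It notes that $E_L+E_R=r_*^{-1}\bigl(\nu_L(\ker\nu_R)+\nu_R(\ker\nu_L)\bigr)$. Involutivity then follows from the compatibility identity $\nu_L[\xi,\eta]^{\nu_R}+\nu_R[\xi,\eta]^{\nu_L}=[\nu_L\xi,\nu_R\eta]+[\nu_R\xi,\nu_L\eta]$.
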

\begin{proof}
Let us first observe that, because $L$ and $R$ concur weakly, we have that $R \circledast \mathcal{R}_{-1}(L)$ is involutive, and therefore so is the tangent product $\mathscr{N}(L,R):= L \star (R \circledast \mathcal{R}_{-1}(L))$. Therefore $E_L:=\mathscr{N}(L,R) \cap TM$ is involutive; likewise, $E_R:=\mathscr{N}(R,L) \cap TM$ is involutive.

Let $U \subset M$ be the maximal open set on which
 \begin{align*}
  && K(L), && K(R), && K(L)+K(R), && \mathscr{N}(L,R), && E_L, && \mathscr{N}(R,L), && E_R
 \end{align*}
are vector bundles. Then $U$ is dense in $M$. It thus suffices to check that the theorem holds in an open neighborhood $V \subset U$ of every point $x \in U$. We may thus assume that we are in the setting of Corollary \ref{cor : kernel diamond} --- that is, that there exists a commutative diagram of surjective submersions with connected fibres
\begin{align*}
  \xymatrix{
 & M \ar[dl]_{p_L} \ar[dr]^{p_R} \ar[dd]^r & \\
 M_L \ar[dr]_{q_R} & & M_R \ar[dl]^{q_L}\\
 & M_{LR}
 }
\end{align*}
in which
\begin{align*}
  && \ker {p_L}_* = K(L), && \ker {p_R}_* = K(R), && \ker {q_L}_* = K/K(R), && \ker {q_R}_* = K/K(L),
 \end{align*}
where $K:=K(L)+K(R)$, and there exist Poisson structures
\begin{align*}
 && \pi_L \in \mathfrak{X}^2(M_L), && \pi_R \in \mathfrak{X}^2(M_R),
 && \nu_L \in \mathfrak{X}^2(M_{LR}), && \nu_R \in \mathfrak{X}^2(M_{LR}),
\end{align*}
such that
\begin{align*}
 && L={p_L}^!\mathrm{Gr}(\pi_L), && R={p_R}^!\mathrm{Gr}(\pi_R),
 && r_!(L)=\mathrm{Gr}(\nu_L), && r_!(R)=\mathrm{Gr}(\nu_R).
\end{align*}
We may thus write
\begin{align*}
 & L = \mathcal{R}_{\Pi_L}\mathrm{Gr}(K(L)), & R = \mathcal{R}_{\Pi_R}\mathrm{Gr}(K(R)), 
\end{align*}
for two bivectors $\Pi_L,\Pi_R \in \mathfrak{X}^2(M)$ for which
\begin{align*}
 & \pi_L(\xi) = (p_L)_*\Pi_L((p_L)^*\xi), & \xi \in T^*M_L,\\
 & \pi_R(\eta) = (p_R)_*\Pi_R((p_R)^*\eta), & \eta \in T^*M_R.
\end{align*}

\noindent \emph{Step 1. Description of $E_L$ and $E_R$.} Because
\begin{align*}
 L & = \left\{ u+\Pi_L(\xi) + \xi \ | \ u \in K(L), \ \xi \in K(L)^{\circ} \right\}\\
 R \circledast \mathcal{R}_{-1}(L) & = \left\{ v + \Pi_R(\eta) - \Pi_L(\eta) + \eta \ | \ v \in K, \ \eta \in K^{\circ} \right\}
\end{align*}
we see that
\begin{align*}
 \mathscr{N}(L,R) & = \left\{ u+\Pi_L(\xi) + \xi + \eta \ \Bigg| \ 
 \begin{aligned}
  u \in K(L), & \ v \in K,\\
  \xi \in K(L)^{\circ}, & \ \eta \in K^{\circ},\\
  u+\Pi_L(\xi+\eta) & = v + \Pi_R(\eta)
 \end{aligned}
 \right\}.
\end{align*}
Therefore
\begin{align*}
 & E_L = K(L) + \Pi_L(H_L), & H_L = \left\{ \xi \in K^{\circ} \ | \ \Pi_R(\xi) \in K\right\}.
\end{align*}
Observe that
\begin{align*}
H_L = \left( R \circledast \mathrm{Gr}(K) \right) \cap T^*M,
\end{align*}
and therefore
\begin{align*}
H_L^{\circ} = \mathrm{pr}_T\left( R \circledast \mathrm{Gr}(K) \right) = K + \Pi_R(K^{\circ}).
\end{align*}
This implies that
\begin{align*}
E_L^{\circ} & = K(L)^{\circ} \cap \Pi_L^{-1}(H_L^{\circ})\\
& = \left\{ \xi \in K(L)^{\circ} \ | \ \Pi_L(\xi) \in K + \Pi_R(K^{\circ}) \right\}
\end{align*}
Likewise
\begin{align*}
 & E_R = K(R) + \Pi_R(H_R), & E_R^{\circ} & = \left\{ \xi \in K(R)^{\circ} \ | \ \Pi_R(\xi) \in K + \Pi_L(K^{\circ}) \right\}.
\end{align*}

\noindent \emph{Step 2. $L$ and $\mathscr{M}(L,R)$ concur weakly.} Because
\begin{align*}
 L \circledast \mathscr{M}(L,R) & = L \cap E_L^{\perp} + E_L
\end{align*}
and $E_L$ is involutive, in order to show that $L$ and $\mathscr{M}(L,R)$ concur weakly, it suffices to show that 
\begin{align}\label{eq : LEL}
 L \cap E_L^{\perp} & = K(L) + \mathcal{R}_{\Pi_L}(E_L^{\circ})
\end{align}
is involutive. Because $L$ is involutive, we have that
\begin{enumerate}[La)]
 \item $\Pi_L[v,\xi] - [v,\Pi_L(\xi)] \in \Gamma(K(L))$,
 \item $[\xi,\eta]^{\Pi_L} \in \Gamma(K(L)^{\circ})$;
 \item $\Pi_L[\xi,\eta]^{\Pi_L} - [\Pi_L(\xi),\Pi_L(\eta)]\in \Gamma(K(L))$
\end{enumerate}
for all $v \in \Gamma(K(L))$ and all $\xi,\eta \in \Gamma(K(L)^{\circ})$. Likewise, $R$ involutive implies
\begin{enumerate}[Ra)]
 \item $\Pi_R[v,\xi] - [v,\Pi_R(\xi)] \in \Gamma(K(R))$,
 \item $[\xi,\eta]^{\Pi_R} \in \Gamma(K(R)^{\circ})$;
 \item $\Pi_R[\xi,\eta]^{\Pi_R} - [\Pi_R(\xi),\Pi_R(\eta)]\in \Gamma(K(R))$
\end{enumerate}
Moreover, because
\begin{align*}
 L \circledast \mathrm{Gr}(K) & = K + \mathcal{R}_{\Pi_L}(K^{\circ})
\end{align*}
is involutive by Theorem \ref{thm : kernel}, we have that
\begin{enumerate}[LKa)]
 \item $\Pi_L[v,\xi] \equiv [v,\Pi_L(\xi)]$,
 \item $[\xi,\eta]^{\Pi_L} \in \Gamma(K^{\circ})$;
 \item $\Pi_L[\xi,\eta]^{\Pi_L} \equiv [\Pi_L(\xi),\Pi_L(\eta)]$
\end{enumerate}
for all $v \in \Gamma(K)$ and $\xi,\eta \in \Gamma(K^{\circ})$, where we write $\equiv$ for equality up to $\Gamma(K)$. Likewise, 
\begin{align*}
 R \circledast \mathrm{Gr}(K) & = K + \mathcal{R}_{\Pi_R}(K^{\circ})
\end{align*}
involutive implies that
\begin{enumerate}[RKa)]
 \item $\Pi_R[v,\xi] \equiv [v,\Pi_R(\xi)]$,
 \item $[\xi,\eta]^{\Pi_R} \in \Gamma(K^{\circ})$;
 \item $\Pi_R[\xi,\eta]^{\Pi_R} \equiv [\Pi_R(\xi),\Pi_R(\eta)]$
\end{enumerate}
We show that
\begin{align}\label{eq : LEL1}
 \left[ \Gamma(K(L)), \Gamma\left(\mathcal{R}_{\Pi_L}(E_L^{\circ})\right) \right] \subset \Gamma(K(L)) + \Gamma\left(\mathcal{R}_{\Pi_L}(E_L^{\circ})\right).
\end{align}
Let $v \in K(L)$ and $\xi \in \Gamma(E_L^{\circ})$. Then there exists $\xi' \in \Gamma(K^{\circ})$, such that $\Pi_L(\xi) \equiv \Pi_R(\xi')$. Note that $[v,\xi] \in \Gamma(K(L)^{\circ})$ and $[v,\xi'] \in \Gamma(K^{\circ})$, and because
\begin{align*}
 \Pi_L[v,\xi] \equiv [v,\Pi_L(\xi)] \equiv [v,\Pi_R(\xi')] \equiv \Pi_R[v,\xi']
\end{align*}
by La) and RKa), we deduce that, in fact, $[v,\xi] \in \Gamma(E_L^{\circ})$. Therefore
\begin{align*}
 [v,\Pi_L(\xi)+\xi] \equiv \Pi_L[v,\xi]+[v,\xi] \in \Gamma\left(\mathcal{R}_{\Pi_L}(E_L^{\circ})\right).
\end{align*}
Hence \eqref{eq : LEL1} holds true. We show next that 
\begin{align}\label{eq : LEL2}
 \left[ \Gamma\left(\mathcal{R}_{\Pi_L}(E_L^{\circ})\right), \Gamma\left(\mathcal{R}_{\Pi_L}(E_L^{\circ})\right) \right] \subset \Gamma(K(L)) + \Gamma\left(\mathcal{R}_{\Pi_L}(E_L^{\circ})\right).
\end{align}
Let $\xi,\eta \in \Gamma(E_L^{\circ})$. Observe that, since $E_L^{\circ} \subset K(L)^{\circ}$, we have that $[\xi,\eta]^{\Pi_L} \in \Gamma(K(L)^{\circ})$ by Lb). Write now 
\begin{align*}
 && \Pi_L(\xi) = \Pi_R(\xi')+u, && \Pi_L(\eta) = \Pi_R(\eta')+v, && \xi',\eta'\in \Gamma(K^{\circ}), && u,v \in \Gamma(K).
\end{align*}
Then
\begin{align*}
 \Pi_L[\xi,\eta]^{\Pi_L} & \equiv [\Pi_L(\xi),\Pi_L(\eta)] \equiv [\Pi_R(\xi')+u,\Pi_R(\eta')+v] \\
 & \equiv [\Pi_R(\xi'),\Pi_R(\eta')]+[u,\Pi_R(\eta')]+[\Pi_R(\xi'),v]+[u,v]\\
 & \equiv \Pi_R\left([\xi',\eta']^{\Pi_R}+[u,\eta']+[\xi',v]\right)
\end{align*}
shows that $[\xi,\eta]^{\Pi_L} \in \Gamma(E_L^{\circ})$, since $[\xi',\eta']^{\Pi_R} \in \Gamma(K^{\circ})$ by RKb). Therefore
\begin{align*}
 [\Pi_L(\xi)+\xi,\Pi_L(\eta)+\eta] \equiv \Pi_L[\xi,\eta]^{\Pi_L}+[\xi,\eta]^{\Pi_L}
\end{align*}
shows that \eqref{eq : LEL2} holds true.\\

\noindent \emph{Step 3. $R$ and $\mathscr{M}(L,R)$ concur weakly.} As before,
\begin{align*}
 R \circledast \mathscr{M}(L,R) & = R \cap E_L^{\perp} + E_L
\end{align*}
is involutive if
\begin{align}\label{eq : REL}
 R \cap E_L^{\perp} & = K(R) + \mathcal{R}_{\Pi_R}(K(R)^{\circ} \cap E_L^{\circ})
\end{align}
is involutive. Let $v \in \Gamma(K(R))$ and $\xi \in \Gamma(K(R)^{\circ} \cap E_L^{\circ})$. Because $\xi \in \Gamma(E_L^{\circ})$, 
\begin{align*}
 & \Pi_L(\xi) = \Pi_R(\xi')+u, & \xi'\in \Gamma(K^{\circ}), \ u \in \Gamma(K).
\end{align*}
Then $[v,\xi] \in \Gamma(K(R)^{\circ})$, and because
\begin{align*}
 \Pi_L[v,\xi] \equiv [v,\Pi_L(\xi)] \equiv [v,\Pi_R(\xi')+u] \equiv \Pi_R[v,\xi']
\end{align*}
we see that $[v,\xi] \in \Gamma(K(R)^{\circ} \cap E_L^{\circ})$. Therefore
\begin{align}\label{eq : REL1}
 [v,\Pi_R(\xi)+\xi] & \equiv \Pi_R[v,\xi]+[v,\xi] \in \Gamma\left( \mathcal{R}_{\Pi_R}(K(R)^{\circ} \cap E_L^{\circ}) \right).
\end{align}
Let next $\eta \in \Gamma(K^{\circ} \cap E_L^{\circ})$. Then $[\xi,\eta]^{\Pi_R} \in \Gamma(K^{\circ})$ by RKb). Because
\begin{align*}
 L \circledast R \circledast \mathrm{Gr}(K) = K + \mathcal{R}_{\Pi_L+\Pi_R}(K^{\circ})
\end{align*}
is involutive, we have that
\begin{align}\tag{LRK}
 \Pi_L[\xi,\eta]^{\Pi_R}+\Pi_R[\xi,\eta]^{\Pi_L} \equiv [\Pi_L(\xi),\Pi_R(\eta)] + [\Pi_R(\xi),\Pi_L(\eta)]
\end{align}
for all $\xi,\eta \in  \Gamma(K^{\circ})$. Therefore, if $\xi,\eta \in \Gamma(K^{\circ} \cap E_L^{\circ})$, that is
\begin{align*}
 && \Pi_L(\xi) = \Pi_R(\xi')+u, && \Pi_L(\eta) = \Pi_R(\eta')+v, && \xi',\eta'\in \Gamma(K^{\circ}), && u,v \in \Gamma(K),
\end{align*}
then
\begin{align*}
 \Pi_L[\xi,\eta]^{\Pi_R} & \equiv [\Pi_L(\xi),\Pi_R(\eta)] + [\Pi_R(\xi),\Pi_L(\eta)] - \Pi_R[\xi,\eta]^{\Pi_L} \\
 & \equiv [\Pi_R(\xi')+u,\Pi_R(\eta)] + [\Pi_R(\xi),\Pi_R(\eta')+v] - \Pi_R[\xi,\eta]^{\Pi_L}\\
 & \equiv \Pi_R\left([\xi',\eta]^{\Pi_R} + [u,\eta] + [\xi,\eta']^{\Pi_R} + [\xi,v] - [\xi,\eta]^{\Pi_L}\right)
\end{align*}
shows that $[\xi,\eta]^{\Pi_R} \in \Gamma(K^{\circ} \cap E_L^{\circ})$. Therefore
\begin{align}\label{eq : REL2}
 [\Pi_R(\xi)+\xi,\Pi_R(\eta)+\eta] & \equiv \Pi_R[\xi,\eta]^{\Pi_R} + [\xi,\eta]^{\Pi_R} \in \Gamma\left(\mathcal{R}_{\Pi_R}(K^{\circ} \cap E_L^{\circ})\right)
\end{align}
Then observe that \eqref{eq : REL1} and \eqref{eq : REL2} imply that \eqref{eq : REL} is involutive.\\

\noindent \emph{Step 4. $L$ and $R$ concur weakly with $\mathscr{M}(R,L)$.} Just reverse the roles of $L$ and $R$ in Steps 2 and 3.\\

\noindent \emph{Step 5. $\mathscr{M}(L,R)$ and $\mathscr{M}(R,L)$ concur weakly.} 
We show that
\begin{align*}
 E:=E_L+E_R \subset TM
\end{align*}
is involutive. By Theorem \ref{thm : kernel}, there exist Poisson structures $\nu_L,\nu_R \in \mathfrak{X}^2(M_{LR})$, such that
\begin{align*}
 & r_!(L) = \mathrm{Gr}(\nu_L), & r_!(R) = \mathrm{Gr}(\nu_R).
\end{align*}
We can then write
\begin{align*}
 & H_L = r^*(\ker \nu_R), & H_R = r^*(\ker \nu_L),
\end{align*}
and therefore
\begin{align*}
 E & = E_L +E_R = K + \Pi_Lr^*(\ker \nu_R) + \Pi_Rr^*(\ker \nu_L) \\ & = r_*^{-1}\left( \nu_L(\ker(\nu_R))+\nu_R(\ker(\nu_L)) \right).
\end{align*}
So it suffices to show that $\nu_L(\ker(\nu_R))+\nu_R(\ker(\nu_L))$ is involutive. Because $\nu_L$ and $\nu_R$ commute by Theorem \ref{thm : reduction of weakly concurring concur weakly}, we have that
\begin{align}\label{eq : commuting Poisson}
 \nu_L[\xi,\eta]^{\nu_R}+\nu_R[\xi,\eta]^{\nu_L} = [\nu_L(\xi),\nu_R(\eta)]+[\nu_R(\xi),\nu_L(\eta)]
\end{align}
for all $\xi,\eta \in \Omega^1(M_{LR})$, and therefore
\begin{align*}
 & \left[\Gamma(\ker \nu_R),\Gamma(\ker \nu_R)\right]^{\nu_L} \subset \Gamma(\ker \nu_R).
\end{align*}
This implies that, for all $\xi,\eta \in \Gamma(\ker \nu_R)$,
\begin{align*}
 [\nu_L(\xi),\nu_L(\eta)] = \nu_L[\xi,\eta]^{\nu_L},
\end{align*}
and so $\nu_L(\ker \nu_R)$ is involutive. By symmetry, also $\nu_R(\ker \nu_L)$ is involutive. Again by \eqref{eq : commuting Poisson}, if $\xi \in \Gamma(\ker \nu_R)$ and $\eta \in \Gamma(\ker \nu_L)$, then
\begin{align*}
 & [\xi,\eta]^{\nu_L} \in \Gamma(\ker \nu_L),
 & [\xi,\eta]^{\nu_R} \in \Gamma(\ker \nu_R),
\end{align*}
and so
\begin{align*}
 [\nu_L(\xi),\nu_R(\eta)] = \nu_L[\xi,\eta]^{\nu_R}+\nu_R[\xi,\eta]^{\nu_L}
\end{align*}
lies in $\nu_L(\ker(\nu_R))+\nu_R(\ker(\nu_L))$. This concludes the proof.

\end{proof}

\begin{corollary}\label{cor : Magri diamond}
 Let $L_M$ and $R_M$ be weakly concurring Dirac structures on $M$, and suppose there are surjective submersions
\begin{align*}
  \xymatrix{
 & M \ar[dl]_{p_L} \ar[dr]^{p_R} \ar[dd]^r & \\
 M_{E_L} \ar[dr]_{q_R} & & M_{E_R} \ar[dl]^{q_L}\\
 & M_{E_{LR}}
 }
\end{align*}
such that
 \begin{align*}
  & \ker {p_L}_* = \mathscr{M}(L,R) \cap TM, & \ker {p_R}_* = \mathscr{M}(R,L) \cap TM, \\
  & \ker {q_L}_* = \frac{(\mathscr{M}(L,R) \circledast \mathscr{M}(R,L)) \cap TM}{\mathscr{M}(L,R) \cap TM}, & \ker {q_R}_* = \frac{(\mathscr{M}(L,R) \circledast \mathscr{M}(R,L)) \cap TM}{\mathscr{M}(R,L) \cap TM}.
 \end{align*}
Then $L$, $R$ push forward, under each of these submersions, to weakly concurring Dirac structures, and
\begin{align*}
 && {p_L}_!(L), && {p_R}_!(R), && {r}_!(L), && {r}_!(R)
\end{align*}
are Poisson.
\end{corollary}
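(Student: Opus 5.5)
The plan is to deduce Corollary \ref{cor : Magri diamond} from Theorem \ref{thm : new magri recipe}, in the same way Corollary \ref{cor : kernel diamond} follows from Theorem \ref{thm : kernel}.

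\emph{Step 1: pushforwards to $M_{E_L}$ and $M_{E_R}$.} Write $E_L=\mathscr{M}(L,R)\cap TM$ and $E_R=\mathscr{M}(R,L)\cap TM$. By hypothesis these are the vertical bundles of $p_L$ and $p_R$, so $\mathrm{Gr}(E_L)=\mathscr{M}(L,R)$ and $\mathrm{Gr}(E_R)=\mathscr{M}(R,L)$. Theorem \ref{thm : new magri recipe} says that $L$ and $R$ each concur weakly with $\mathscr{M}(L,R)$, which gives
\begin{align*}
 L[E_L]=L\circledast \mathrm{Gr}(E_L), \qquad R[E_L]=R\circledast \mathrm{Gr}(E_L)
\end{align*}
are Dirac (they are smooth, being the pullbacks computed below). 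Lemma \ref{lem : libermann} then shows that $L$ and $R$ push forward under $p_L$. The same argument with $\mathscr{M}(R,L)$ handles $p_R$.

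\emph{Step 2: concurrence on $M_{E_L}$ and $M_{E_R}$.} I use the compatibility of $\circledast$ with forward maps:
\begin{align*}
 {p_L}_!(L)\circledast {p_L}_!(R)={p_L}_!(L\circledast R).
\end{align*}
So it suffices to show that $L\circledast R$ pushes forward to a Dirac structure, that is, that $(L\circledast R)\circledast\mathrm{Gr}(E_L)$ is involutive. Since $\circledast$ is associative and commutative, this is the triple product of $L$, $R$ and $\mathscr{M}(L,R)$. Pairwise weak concurrence does not obviously imply triple concurrence, and I expect this to be the main obstacle. My first attempt is to use the explicit description $E_L=K(L)+\Pi_L(H_L)$ from Step~1 of the proof of Theorem \ref{thm : new magri recipe}. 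This reduces the claim to showing that $\mathcal{R}_{\Pi_L+\Pi_R}$ applied to the appropriate annihilator is bracket-closed modulo $K+E_L$. That in turn would follow by adding the computations of Steps 2 and 3 there, together with identity (LRK). Alternatively, and more cleanly, I would use that $E_L\supset K(L)$ and factor $p_L$ through the kernel quotient. Downstairs the structures become Poisson on $M_{LR}$, where Magri's original recipe for commuting Poisson structures is classical and the witness property yields concurrence by Theorem \ref{thm : reduction of weakly concurring concur weakly} applied with $X=M$.

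\emph{Step 3: the bottom vertex and the Poisson claims.} The map $r$ has vertical bundle
\begin{align*}
 E=(\mathscr{M}(L,R)\circledast\mathscr{M}(R,L))\cap TM=E_L+E_R,
\end{align*}
which is involutive by Step 5 of the proof of Theorem \ref{thm : new magri recipe}. Pushing forward in stages along $q_R\circ p_L$, using Steps 1 and 2 at each stage, gives weakly concurring structures on $M_{E_{LR}}$.

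For the Poisson claims, note that $K(L)\subset E_L$: indeed $u\in K(L)$ lies in $\mathscr{N}(L,R)\cap TM$ with $\xi=\eta=0$. Hence ${p_L}_!(L)\cap TM_{E_L}=p_{L*}(L\cap TM)=0$, and likewise $K(L)\subset E\supset K(R)$ gives ${r}_!(L)\cap T=0$ and ${r}_!(R)\cap T=0$. A Lagrangian subbundle meeting $T$ trivially is the graph of a bivector, and it is Poisson because it is Dirac.
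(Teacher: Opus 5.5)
Your Step 1 and the Poisson claims (via $K(L)\subset E_L$ and $K(R)\subset E_R$) are fine. The gap is in Step 2: the weak concurrence of the pushforwards is the actual content of the corollary, and you leave it open. The route you call cleaner does not work. The map $p_L$ does not factor through $M_{LR}$, because $E_L$ contains $K(L)$ but in general not $K(R)$: in Example \ref{ex : Magri diamond}, $K(R_M)=\langle \tfrac{\partial}{\partial x_5}\rangle$ while $E_L=\langle \tfrac{\partial}{\partial x_1},\tfrac{\partial}{\partial x_3}\rangle$. Moreover, Example \ref{ex : kernel then magri} shows explicitly that the Dirac Magri recipe is \emph{not} Magri's original recipe performed on $M_{LR}$. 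Route (a) points in a workable direction, but you only announce it and never carry it out.

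The missing observation is that no triple-concurrence computation is needed: the obstacle you isolate is exactly what Theorem \ref{thm : reduction of weakly concurring concur weakly} settles, applied directly on $M$. When $X=M$, the proposition ``CR3 holds'' in Section \ref{eq : Witnesses of concurrent reduction} says that $E_L$ is a witness for a Dirac structure $D$ precisely when $D[E_L]$ is Dirac. So your Step 1 already makes $E_L$ a common witness for the weakly concurring pair $L,R$, and the theorem then yields that ${p_L}_!(L)$ and ${p_L}_!(R)$ concur weakly.

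Route (a) can also be completed directly. Since $E_L^{\perp}=TM\oplus E_L^{\circ}$, every element of $(L\circledast R)\cap E_L^{\perp}$ has the form $a_L+\mathrm{pr}_T(a_R)$ with $a_L\in L\cap E_L^{\perp}$ and $a_R\in R\cap E_L^{\perp}$. The cotangent part of a Dorfman bracket is additive in the anchors. Hence weak concurrence of $L,R$, together with involutivity of $L\cap E_L^{\perp}$ and $R\cap E_L^{\perp}$ (Steps 2--3 of the proof of Theorem \ref{thm : new magri recipe}), does the job.

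The same gap reappears at the bottom vertex. ``Steps 1 and 2 at each stage'' needs $L[E_L+E_R]$ and $R[E_L+E_R]$ to be Dirac, and this is not among the pairwise statements of Theorem \ref{thm : new magri recipe}. It follows from another application of the concurring-reduction theorem, this time to the weakly concurring pairs $(L,\mathscr{M}(R,L))$ and $(R,\mathscr{M}(R,L))$ with common witness $E_L$. Here $E_L$ is a witness for $\mathrm{Gr}(E_R)$ because $\mathrm{Gr}(E_R)[E_L]=\mathrm{Gr}(E_L+E_R)$ is Dirac by Step 5 of that proof. This shows that ${p_L}_!(L)$ and ${p_L}_!(R)$ concur weakly with ${p_L}_!\mathrm{Gr}(E_R)$, which is the graph of the vertical bundle of $q_R$. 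So Lemma \ref{lem : libermann} applies to $q_R$, and one more application of Theorem \ref{thm : reduction of weakly concurring concur weakly} gives weak concurrence on $M_{E_{LR}}$.
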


\begin{example}\label{ex : Magri diamond}\normalfont
 Consider the Dirac structures $L_M$ and $R_M$ on $M=\mathbb{R}^5(x_1,x_2,x_3,x_4,x_5)$ of Example \ref{ex : kernel reduction}. Observe that
 \begin{align*}
  & \mathscr{N}(L_M,R_M) = \langle \tfrac{\partial}{\partial x_1}, \tfrac{\partial}{\partial x_2}+\tfrac{\partial}{\partial x_4} + \mathrm{d}x_5, \tfrac{\partial}{\partial x_3}, \mathrm{d}x_2-\mathrm{d}x_4, \tfrac{\partial}{\partial x_5} - \mathrm{d}x_4 \rangle
 \end{align*}
 has kernel $E_L = \langle \tfrac{\partial}{\partial x_1}, \tfrac{\partial}{\partial x_3} \rangle$. Likewise, $E_R = \langle \tfrac{\partial}{\partial x_3}, \tfrac{\partial}{\partial x_5} \rangle$. We thus have a diamond
 \begin{align*}
  & \xymatrix{
 & M \ar[dl]_{p'_L} \ar[dr]^{p'_R} \ar[dd]^{r'} & \\
 M_{E_L} \ar[dr]_{q'_R} & & M_{E_R} \ar[dl]^{q'_L}\\
 & M_{E_{LR}}
 }
 & \xymatrix{
 & (x_1,x_2,x_3,x_4,x_5) \ar[dl] \ar[dr] \ar[dd] & \\
 (x_2,x_4,x_5) \ar[dr] & & (x_1,x_2,x_4) \ar[dl]\\
 & (x_2,x_4)
 }
\end{align*}
which induce the Dirac structures
\begin{align*}
 & L_{M_{E_L}} = \mathrm{Gr}(\tfrac{\partial}{\partial x_5} \wedge \tfrac{\partial}{\partial x_4}), & R_{M_{E_L}} = \mathrm{Gr}(\tfrac{\partial}{\partial x_5})\\
 & L_{M_{E_R}} = \mathrm{Gr}(\tfrac{\partial}{\partial x_1}), 
 & R_{M_{E_R}} = \mathrm{Gr}(\tfrac{\partial}{\partial x_2} \wedge \tfrac{\partial}{\partial x_1})\\
 & L_{M_{E_{LR}}} = T^*M_{E_{LR}}, & R_{M_{E_{LR}}} = T^*M_{E_{LR}}. 
\end{align*}
\end{example}

\section{Further comments and examples}

\subsection{Moment map condition}
\vspace{0.2cm}\phantom{}

\begin{minipage}{0.9\textwidth}
\begin{mdframed}[backgroundcolor=blue!5]
The classical example of witnesses arises in Hamiltonian spaces \cite[Example B]{MR}. We recall that setting and illustrate some of our constructions in this case.
\end{mdframed} 
\end{minipage}
\vspace{0.2cm}

Let a Lie group $G$ act on a manifold $M$, and let $L_M$ be a Dirac structure on $M$. We say that the action is {\bf Dirac} if $G$ acts by Dirac automorphisms: for each $g \in G$, the induced automorphism
\begin{align*}
& g_*: \mathbb{T}M \diffto \mathbb{T}M, & g_*(u+\alpha) = g_*(u)+(g^{-1})^*(\alpha)
\end{align*}
maps $L_M$ to itself. If $G$ is connected, the condition that $G$ act by Dirac automorphisms is equivalent to the requirement that
 \begin{align}\label{eq : Diracaction}
     & [\mathrm{a}(v),\Gamma(L_M)] \subset \Gamma(L_M), & v \in \mathfrak{g},
 \end{align}
where $\mathrm{a} : \mathfrak{g} \to \mathfrak{X}(M)$ stands for the infinitesimal action. 

A Dirac action $G \curvearrowright (M,L_M)$ is {\bf Hamiltonian} if it is equipped with an ${\rm Ad}^*$-equivariant map
\begin{align*}
    & \mu : M \to \mathfrak{g}^*, & \mu(g \cdot x) = \mathrm{Ad}^*_{g^{-1}}\mu(x),
\end{align*}
with the property that
\begin{align}\label{eq : moment map condition}
    & \mathrm{a}(v) + \mathrm{d}\langle \mu,v\rangle \ \in \Gamma(L_M), \quad v \in \mathfrak{g}.
\end{align}
We refer to $G \curvearrowright (M,L_M) \stackrel{\mu}{\rmap}\mathfrak{g}^*$ as a {\bf Hamiltonian space} \cite{BF}.

\begin{definition}\normalfont\label{def : good value}
Let $G \curvearrowright (M,L_M) \stackrel{\mu}{\rmap}\mathfrak{g}^*$ be a Hamiltonian space in which the action is free. We will call an element $\alpha \in \mathfrak{g}^*$ {\bf good} if:
\begin{enumerate}[a)]
 \item $\mu$ meets $\alpha$ cleanly --- that is, $X = \mu^{-1}(\alpha)$ is a smooth submanifold, and
 \begin{align*}
  & T_xX = \{ u \in T_xM \ | \ \mu_*(u) = 0\}, & x \in X;
 \end{align*}
 \item The isotropy subgroup $G_{\alpha} = \{ g \in G \ | \ \mathrm{Ad}^*_{g^{-1}}(\alpha)=\alpha\}$ acts properly on $X$;
 \item The map $j:L_M|_X \to \mathfrak{g}^*$ given by
 \begin{align*}
  & \langle j(u+\xi),v\rangle:=\langle \xi,\mathrm{a}(v)\rangle, & v \in \mathfrak{g}
 \end{align*}
has locally constant rank.
\end{enumerate}
\end{definition}

\begin{lemma}\label{lem : witness for good value}
If $\alpha \in \mathfrak{g}^*$ is a good value of a free Hamiltonian space
\begin{align*}
 G \curvearrowright (M,L_M) \stackrel{\mu}{\rmap} \mathfrak{g}^*,
\end{align*}
then $E_G \subset TM|_X$ is a witness for $L_M$, where
\begin{align*}
 & X = \mu^{-1}(\alpha), & E_G = \{\mathrm{a}(v)_x \ | \ v \in \mathfrak{g}, \ x \in X\}.
\end{align*}
\end{lemma}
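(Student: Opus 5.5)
The plan is to verify the three conditions of Definition \ref{def : witness} directly for $E_G$, using the triangle
\begin{align*}
 M \stackrel{i}{\lmap} X=\mu^{-1}(\alpha) \stackrel{p}{\rmap} Y=X/G_\alpha .
\end{align*}
Freeness together with properness (condition b) of Definition \ref{def : good value}) makes $p$ a surjective submersion. The first step is to check that $E_G$ is adapted, namely $F=E_G\cap TX$. Since the action is free, $E_G$ is a vector bundle of rank $\dim\mathfrak g$. By equivariance, $\mu_*(\mathrm{a}(v)_x)=-\mathrm{ad}^*_v\alpha$. Condition a) then gives $\mathrm{a}(v)_x\in T_xX$ exactly when $v\in\mathfrak g_\alpha$, so $E_G\cap TX=\mathrm{a}(\mathfrak g_\alpha)|_X=\ker p_*$, which is $F$.

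The key computation, which I expect to be the heart of the matter, comes next. Differentiating \eqref{eq : moment map condition} shows that the sections $\mathrm{a}(v)+\mathrm{d}\langle\mu,v\rangle$ lie in $L_M$. For $u+\xi\in L_M$, isotropy gives
\begin{align*}
 \langle \xi,\mathrm{a}(v)\rangle = -\langle \mathrm{d}\langle\mu,v\rangle, u\rangle = -\langle \mu_*u, v\rangle ,
\end{align*}
that is, $j(u+\xi)=-\mu_*(u)$. It follows that $L_M\cap E_G^{\perp}=\{u+\xi\in L_M : \mu_*u=0\}$ at points of $X$, so its anchor lands in $TX$ by a). This proves \hyperref[Wit3]{Wit3)}.

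For \hyperref[Wit1]{Wit1)}, I would use \eqref{eq: wit3 implies} to write
\begin{align*}
 L_M[I]=(L_M\cap E_G^\perp+E_G)\cap N^*X^\perp+N^*X = L_M\cap E_G^\perp+F+N^*X .
\end{align*}
Here $L_M\cap E_G^\perp=\ker j$ has constant rank by c), so it is smooth. The sum is Lagrangian, hence has constant dimension. Each summand is smooth, so the sum is a smooth vector bundle.

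For \hyperref[Wit2]{Wit2)}, the bundle to check is $L_M[E_G]\cap N^*X^\perp=\ker j+F$, and I must show that $\Gamma(\mathbb TM,(\ker j+F))$ is closed under the Dorfman bracket. I will handle the three kinds of brackets separately.
\begin{itemize}
 \item Brackets between two elements of $F$: these are generated by $\mathrm{a}(v)$ with $v\in\mathfrak g_\alpha$, which close because $\mathfrak g_\alpha$ is a subalgebra.
 \item Brackets $[\mathrm{a}(v),a]$ with $a\in\Gamma(L_M)$ and $a|_X\in\ker j$: by \eqref{eq : Diracaction} the result lies in $L_M$. Its anchor is $[\mathrm{a}(v),\mathrm{pr}_T a]$, which is tangent to $X$ because $\mathrm{a}(v)$ with $v\in\mathfrak g_\alpha$ is tangent to $X$. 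So it lies in $L_M\cap N^*X^{\perp}=\ker j$, using the identity from the key computation.
 \item Brackets of two sections of $L_M$ whose anchors are tangent to $X$: the result stays in $L_M$, and the anchor stays tangent to $X$ by the Example preceding Lemma \ref{lem : inherits lie alg str}.
\end{itemize}

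The main obstacle is that sections of $\ker j$ along $X$ need not extend to sections of $L_M$ near $X$. The same issue arises with general $\Gamma(\mathbb TM,D)$-sections, which are only in $D$ along $X$. I would handle this by the Leibniz argument of Lemma \ref{lem : inherits lie alg str}: use the smoothness from c) to pick local frames of $\ker j$ that extend to sections of $L_M$ defined near $X$. Then write an arbitrary section as a $C^\infty(M)$-combination of these frames plus terms vanishing on $X$. The function-derivative terms produced by the Leibniz rule are harmless, because the anchors of all sections involved are tangent to $X$.
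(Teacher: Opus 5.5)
Your proposal is correct and follows the paper's proof essentially step for step. It obtains adaptedness from equivariance and a); the isotropy computation against $\mathrm{a}(v)+\mathrm{d}\langle\mu,v\rangle$ then gives $L_M\cap E_G^{\perp}=\ker j=L_M\cap N^*X^{\perp}$ along $X$, which yields \hyperref[Wit3]{Wit3)} directly and \hyperref[Wit1]{Wit1)} via condition c); and \hyperref[Wit2]{Wit2)} comes from the Dirac-action condition together with tangency to $X$ of brackets of vector fields tangent to $X$ --- the only cosmetic difference being that for $[\mathrm{a}(v),a]$ you use tangency of the anchor plus $L_M\cap N^*X^{\perp}=\ker j$, where the paper checks $\langle[\mathrm{a}(v),\xi],\mathrm{a}(w)\rangle=0$ directly.

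One caution about your last paragraph, on a point the paper also leaves unaddressed. The left-slot Leibniz term $\langle a,b\rangle\,\mathrm{d}f$ in $[fa,b]$, and the term $\mathrm{d}\langle a,\mathrm{a}(v)\rangle$ in the reversed bracket $[a,\mathrm{a}(v)]$, are not killed by tangency of anchors; they only land in $N^*X$. In fact, literal closure of $\Gamma(\mathbb{T}M,\ker j+F)$ would force $N^*X\subset\ker j+F$. So what the argument genuinely proves is closure modulo $N^*X$, i.e.\ that $L_M[I]=\ker j+F+N^*X$ is Dirac along $i$, which is precisely what Proposition \ref{pro : CR1} uses.
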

\begin{proof}
 The action being free on $M$ implies that $\mathrm{a}_x:\mathfrak{g} \to T_xM$ is injective for each $x \in M$; therefore $E_G=\mathrm{a}(\mathfrak{g})|_X$ is a vector subbundle of $TM|_X$, and this is an adapted subbundle because
 \begin{align*}
  E_G \cap TX & = \{ \mathrm{a}(v) \ | \ \mu_*\mathrm{a}(v)=\pi_{\mathfrak{g}}^{\sharp}(v) = 0\} = \mathrm{a}(\mathfrak{g}_{\alpha})|_X.
 \end{align*}
Note that
\begin{align*}
 L_M \cap E_G^{\perp} & = \{ u+\xi \in L_M \ | \ \langle \xi,\mathrm{a}(v)\rangle = 0, \ v \in \mathfrak{g} \}\\
 & = \{ u+\xi \in L_M \ | \ \langle u,\mu^*(v) \rangle = 0, \ v \in \mathfrak{g} \}\\
 & = L_M \cap (N^*X)^{\perp},
\end{align*}
where in the second equality we used that $\mathrm{a}(v)+\mu^*(v) \in L_M$ implies that
\begin{align*}
 \langle \xi,\mathrm{a}(v)\rangle + \langle u,\mu^*(v) \rangle = \langle u+\xi,\mathrm{a}(v)+\mu^*(v) \rangle = 0
\end{align*}
for all $u+\xi \in L_M$. Condition \hyperref[Wit3]{Wit3)} follows immediately, and also condition \hyperref[Wit1]{Wit1)}, since
 \begin{align*}
 L_M \cap I^{\perp} & = \left(L_M \cap (N^*X)^{\perp}\right) \cap F^{\perp} = \left(L_M \cap E^{\perp}\right) \cap F^{\perp} = L_M \cap E^{\perp},
\end{align*}
(and this implies that also $L_M \cap I$ and $L_M[I]=L_M \cap I^{\perp}+I$ are vector bundles.)

On the other hand, note that $F$ is the foliation by orbits of $G_{\alpha}$, and that if $u+\xi \in \Gamma(L_M \cap E_G^{\perp})$ and $v \in \mathfrak{g}_{\alpha}$, then
\begin{align*}
 [\mathrm{a}(v),u+\xi] & = [\mathrm{a}(v),u]+[\mathrm{a}(v),\xi]
\end{align*}
again lies in $\Gamma(L_M \cap E_G^{\perp})$, since the action is Dirac and
\begin{align*}
 \langle [\mathrm{a}(v),\xi],\mathrm{a}(w)\rangle = \mathscr{L}_{\mathrm{a}(v)}\langle \xi,\mathrm{a}(w)\rangle - \langle \xi,\mathrm{a}([v,w])\rangle = 0
\end{align*}
for all $w \in \mathfrak{g}_{\alpha}$ --- that is,
\begin{align}
  \left[ \Gamma(F) ,\Gamma(L_M \cap E^{\perp}) \right] \subset \Gamma(F) + \Gamma(L_M \cap E^{\perp}).
\end{align}
Now, because
\begin{align*}
 L_M[E] \cap (N^*X)^{\perp} = \left( L_M \cap E^{\perp} + E \right) \cap (N^*X)^{\perp} = L_M \cap E^{\perp} + F,
\end{align*}
and 
\begin{align*}
 \left[ \Gamma(L_M \cap E^{\perp}) ,\Gamma(L_M \cap E^{\perp}) \right] & = \left[ \Gamma(L_M \cap (N^*X)^{\perp}) ,\Gamma(L_M \cap (N^*X)^{\perp}) \right] \\
 & = \Gamma(L_M \cap (N^*X)^{\perp}) = \Gamma(L_M \cap E^{\perp})
\end{align*}
because the bracket of any two vector fields of $M$ which are tangent to $X$ is again tangent to $X$, we deduce that \hyperref[Wit2]{Wit2)} is satisfied. Hence $E_G$ is a witness for $L_M$.
\end{proof}

\begin{remark}\label{rmk : regularity of brahic fernandes}\normalfont
    Stronger versions of conditions a) and c) above are ensured by the notion of regularity introduced in \cite{BF}, where one requires that $\alpha\in \mathfrak{g}^*$ be a regular value for the restriction of the momentum map $\mu|_S : S \to \mathfrak{g}^*$ to every leaf $S$ of $L_M$. This condition in fact implies  that $\mu$ meets $\alpha$ transversely, and that $j:L_M|_X\rightarrow \mathfrak{g}^*$ is fiberwise surjective.
\end{remark}

\begin{remark}\normalfont
In the setting of Poisson manifolds, a Hamiltonian $G$-space whose infinitesimal action is free is necessarily regular in the sense of Remark \ref{rmk : regularity of brahic fernandes}. For general Dirac structures, this is no longer the case, as the following example illustrates.
\end{remark}

\begin{example}\label{examplegood}\normalfont
Consider $M=\mathbb{R}^4$, equipped with the action
\begin{align*}
 & G:=\mathbb{R}^2 \curvearrowright M, (a,b) \cdot (x_1,x_2,x_3,x_4):=(x_1+a,x_2,x_3+a,x_4+b)
\end{align*}
and the Dirac structure $L_M$ corresponding to the graph of the closed two-form $\omega_M=\mathrm{d}x_1 \wedge \mathrm{d}x_2$. Then
\begin{align*}
 & G \curvearrowright (M,L_M) \stackrel{\mu}{\rmap} \mathfrak{g}^*\simeq \mathbb{R}^2, & \mu(x_1,x_2,x_3,x_4) = (x_2,0)
\end{align*}
is a Hamiltonian $G$-space. Let $X:=\mu^{-1}(0)$, and note that $0 \in \mathfrak{g}^*$ is a good value which is not regular. Thus
\begin{align*}
 E_G|_X = E_G \cap TX = \left\langle \tfrac{\partial}{\partial x_1}+\tfrac{\partial}{\partial x_3},\tfrac{\partial}{\partial x_4} \right\rangle
\end{align*}
is a witness of the reduction of $i^!(L_M) = TX$ to the tangent bundle of a line.
\end{example}

\subsection{Dirac-Nijenhuis structures}

An endomorphism $N_M:TM \to TM$ (i.e. a linear map covering the identity) gives rise to an $N_M$-twisted bracket $[\cdot,\cdot]^{N_M} : \mathfrak{X}(M) \times \mathfrak{X}(M) \to \mathfrak{X}(M)$, given by
\begin{align*}
 [u,v]^{N_M}:=[N_M(u),v]+[u,N_M(v)]-N_M[u,v].
\end{align*}
We say that $N_M$ is {\bf Nijenhuis} if
\begin{align*}
 & N_M[u,v]^{N_M}=[N_M(u),N_M(v)], & u,v \in \mathfrak{X}(M).
\end{align*}
A smooth map $\phi : M \to P$ {\bf relates} two Nijenhuis maps $N_M : TM \to TM$ and $N_P : TP \to TP$ if
\begin{align*}
 && u_M \in \mathfrak{X}(M), && u_P \in \mathfrak{X}(P), && u_M \sim_{\phi} u_P
\end{align*}
implies that $N_M(u_M) \sim_{\phi} N_P(u_P)$.

If $L_M$ is a Dirac structure on $M$, we say that $(L_M,N_M)$ is {\bf Dirac-Nijenhuis} \cite[Section 3.3]{BDN} if
\begin{align*}
 & (N_M,N_M^*)(L) \subset L, & \{ \Gamma(\mathbb{T}M), \Gamma(L_M) \}_{N_M} \subset \Gamma(L_M),
\end{align*}
where $\{\cdot,\cdot\}_{N_M}:\Gamma(\mathbb{T}M) \times \Gamma(\mathbb{T}M) \to \Gamma(\mathbb{T}M)$ denotes
\begin{align}\label{eq : bracket of prolongation of a}
 \{ u + \xi,v+\eta\}_{N_M} & = [v,N_M(u)]-N_M[v,u] + [\eta,N_M(u)]-[N_M^*(\eta),u].
\end{align}

\begin{example}\normalfont
 When $L_M$ is the graph of a Poisson structure $\pi_M$, the condition that $(L_M,N_M)$ be Dirac-Nijenhuis is equivalent to
 \begin{align*}
  & N_M\pi_M = \pi_M N_M^*, & N_M^*[\xi,\eta]^{\pi_M} = [N_M^*(\xi),\pi_M(\eta)]+[\pi_M(\xi),N_M^*(\eta)]
 \end{align*}
 for all $\xi,\eta \in \Omega^1(M)$ --- that is, that $(\pi_M,N_M)$ be a {\bf $PN$-structure}. 
\end{example}

\begin{example}\normalfont
 When $L_M$ is the graph of a closed two-form $\omega_M$, the condition that $(L_M,N_M)$ be Dirac-Nijenhuis is equivalent to
 \begin{align*}
  & \omega N_M = N_M^* \omega, & \mathrm{d}(\omega N_M) = 0;
 \end{align*}
 that is, that $(\omega_M,N_M)$ be a {\bf $\Omega N$-structure}. 
\end{example}

\begin{lemma}\label{lem : MR for DN}
 Assume that the diagram \eqref{triangle} relates Nijenhuis endomorphisms\footnote{As explained in \cite[Section 3]{Fernandes_master} (see also \cite[Theorem 2.1]{Vaisman96}), the reducibility of the Nijenhuis map is independent of that of the Dirac structures. Observe also that the proof of the analogous result for the \emph{weak} Dirac-Nijenhuis structures of \cite{FMT_Nijenhuis} is automatic.}
 \begin{align*}
  && N_M :TM \to TM, 
  && N_X :TX \to TX,
  && N_Y :TY \to TY.
 \end{align*}
If $(L_M,N_M)$ is a Dirac-Nijenhuis structure, and $L_M$ has a Dirac reduction to $L_Y$ on $Y$, then also $(L_Y,N_Y)$ is a Dirac-Nijenhuis structure.
\end{lemma}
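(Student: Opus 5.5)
We prove the statement by factoring the Dirac reduction as a pullback followed by a pushforward, $L_Y = p_!i^!(L_M)$, as in Theorem \ref{thm : Dirac reduction}, and by showing that each of the two Dirac-Nijenhuis conditions survives each step. As in the proofs of Lemma \ref{lem : induced iff} and Theorem \ref{thm : reduction of weakly concurring concur weakly}, both conditions are local, so we may assume that $i$ is an embedding.

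\emph{The algebraic condition.} This is the easy part. Take $a_Y = u_Y+\xi_Y \in L_Y$. Choose $a_X\in i^!(L_M)[F] = p^!(L_Y)$ with $a_X\sim_p a_Y$. Its tangent part lies in $T X$, and it comes from some $a_M = u_M+\xi_M\in L_M$ with $u_M = i_*u_X$ and $i^*\xi_M = \xi_X + \zeta$, where $\zeta$ annihilates $F$ and has the form $p^*(\cdot)$. Because the triangle relates the endomorphisms, $N_M u_M = i_*N_X u_X$ and $N_X u_X \sim_p N_Y u_Y$. Dually, $i^*N_M^*\xi_M = N_X^* i^*\xi_M$ and $p^*N_Y^*\xi_Y = N_X^*p^*\xi_Y$. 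Since $(N_M,N_M^*)(a_M)\in L_M$, we can chase $(N_M u_M, N_M^*\xi_M)$ through the relations $\sim_i$ and $\sim_p$. This shows $(N_Y,N_Y^*)(a_Y)\in p_!i^!(L_M)=L_Y$.

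\emph{The bracket condition.} Here we follow Step 6 of the proof of Theorem \ref{thm : reduction of weakly concurring concur weakly}. The key observation is that $\{\cdot,\cdot\}_N$ in \eqref{eq : bracket of prolongation of a} is built out of Lie derivatives and the maps $N, N^*$. Therefore, if $N_M$ and $N_X$ are $i$-related and $a_X\sim_i a_M$, $b_X\sim_i b_M$, then $\{a_X,b_X\}_{N_X}\sim_i\{a_M,b_M\}_{N_M}$. The same holds for $p$. We check this term by term. For example, $[\eta,N(u)] = \mathscr{L}_{N u}\eta - \iota_u\mathrm d\ldots$ is natural under related vector fields and pulled-back forms, and $[N^*\eta,u]$ is natural because $N^*$ commutes with the pullbacks.

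Now fix $b_Y\in\Gamma(L_Y)$ and an arbitrary $a_Y\in\Gamma(\mathbb T Y)$. Lift them to $b_X\in\Gamma(p^!L_Y)$ and $a_X$, using an Ehresmann connection, and then to $a_M$ and $b_M$. This works on an open dense set where $L_M[I]$ splits, by Remark \ref{rmk : split on open dense} and the proof of Lemma \ref{lem : split}. We need $b_M\in\Gamma(L_M)$, while $a_M$ is arbitrary. Then $\{a_M,b_M\}_{N_M}\in\Gamma(L_M)$ is $i$-related to $\{a_X,b_X\}_{N_X}$, so the latter lies in $i^!L_M$. It also lies in $F^{\perp}$, by tracking $p$-basic sections, so it lies in $p^!L_Y$. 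Pushing forward along $p$ gives $\{a_Y,b_Y\}_{N_Y}\in\Gamma(L_Y)$ on an open dense set, and density then gives it everywhere.

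\emph{Main obstacle.} The hardest step is producing lifts $b_M$ of sections of the reduced structure into $L_M$ itself. The Dirac reduction only guarantees lifts into $L_M[I]$, which may carry components in $F\oplus N^*X$ rather than in $L_M$. We handle this in two stages. First, we restrict to the open dense set where $L_M\cap I$ has constant rank, so that Lemma \ref{lem : split} applies. Second, we use the tensoriality of $\{\cdot,\cdot\}_N$ in its second slot modulo terms involving $N$ applied to $F$ or $N^*X$. Those correction terms stay in $I$ because $N_M$ preserves $TX$ and $F$, since it is related to $N_X$ and $N_Y$.
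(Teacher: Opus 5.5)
Your proposal is correct and follows the same route as the paper: factor $L_Y=p_!i^!(L_M)$ as a pullback by $i$ followed by a pushforward by $p$, check the Dirac--Nijenhuis conditions on an open dense set where the relevant intersections have constant rank, and conclude by density. The only difference is that the paper simply invokes \cite[Propositions 4.1 and 4.4]{BDN} for the two steps, whereas you reprove their content by hand, via naturality of $\{\cdot,\cdot\}_N$ under related maps and split lifts of sections; your ``second stage'' tensoriality correction is therefore superfluous once the first-stage lift already gives $b_M\in\Gamma(L_M)$.
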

\begin{proof}
Because $i^!(L_M)$ is smooth in an open, dense set $U \subset X$, the result follows by density from juxtaposing \cite[Proposition 4.1]{BDN} and \cite[Proposition 4.4]{BDN}.
\end{proof}

A Dirac-Nijenhuis structure $(L_M,N_M)$ gives rise to Lagrangian families
\begin{align*}
 \mathfrak{L}_{N_M^n}(L_M) & = (N_M^n,\mathrm{id})(L_M)+L_M \cap TM,\\
 \mathfrak{R}_{N_M^n}(L_M) & = (\mathrm{id},(N_M^*)^n)(L_M)+L_M \cap T^*M
\end{align*}
of {\bf left-} and {\bf right shifts} (see \cite[Section 9]{FMT_Nijenhuis} and \cite[Proposition 4.8]{BDN}). 

\begin{example}
 If $(\pi_M,N_M)$ is a $PN$-structure, then
 \begin{align*}
  \mathfrak{L}_{N_M^i}\mathrm{Gr}(\pi_M) = \mathrm{Gr}(N_M^i\pi_M).
 \end{align*}
If $(\omega_M,N_M)$ is an $\Omega N$-structure, then
\begin{align*}
 \mathfrak{R}_{N_M^i}\mathrm{Gr}(\omega_M) = \mathrm{Gr}(\omega_M N_M^i).
\end{align*}
\end{example}

\begin{corollary}
 In the setting of Lemma \ref{lem : MR for DN}, the shifts
\begin{align*}
 & \left( \mathfrak{L}_{N_M^n}(L_M), N_M \right),
 & \left( \mathfrak{R}_{N_M^n}(L_M), N_M \right),
\end{align*}
reduce to Dirac-Nijenhuis structures
\begin{align*}
 & \left( \mathfrak{L}_{N_Y^n}(L_Y), N_Y \right),
 & \left( \mathfrak{R}_{N_Y^n}(L_Y), N_Y \right),
\end{align*}
 wherever the latter are smooth. Moreover, any two such shifts concur weakly. 
\end{corollary}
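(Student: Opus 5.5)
The plan is to reduce both claims to Lemma \ref{lem : MR for DN}, applied to the shifted Dirac structures, and to use compatibility of shifts with forward and backward maps. Concurrence of the reduced shifts will come from the corresponding statement upstairs. The argument has three steps.

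\emph{Step 1: the shifts are Dirac-Nijenhuis upstairs.} Wherever $\mathfrak{L}_{N_M^n}(L_M)$ and $\mathfrak{R}_{N_M^n}(L_M)$ are smooth, they are Dirac. Moreover, $(\mathfrak{L}_{N_M^n}(L_M),N_M)$ and $(\mathfrak{R}_{N_M^n}(L_M),N_M)$ are again Dirac-Nijenhuis. I would cite \cite[Proposition 4.8]{BDN} and \cite[Section 9]{FMT_Nijenhuis} for this, rather than reprove it. Smoothness holds on an open dense set, so we may work there.

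\emph{Step 2: the shifts commute with the reduction.} The key claim is
\begin{align*}
 p_!i^!\,\mathfrak{L}_{N_M^n}(L_M) = \mathfrak{L}_{N_Y^n}(p_!i^!L_M),
\end{align*}
and similarly for $\mathfrak{R}$. Fibrewise this is linear algebra, because the maps relate the Nijenhuis tensors.
\begin{itemize}
 \item Take $u+\xi \in L_M$ with $u \sim_i u_X$ and $\xi = i^*$-related to $\xi_X$. Then $N_M u \sim_i N_X u_X$ and $N_M^*\xi$ restricts to $N_X^*\xi_X$, using $i_*N_X = N_Mi_*$ and its dual.
 \item The same holds for $p$, with $p_*N_X=N_Yp_*$ and dual $N_X^*p^* = p^*N_Y^*$.
 \item The extra summands $L\cap TM$ and $L\cap T^*M$ need care. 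The plan is to show that the reduced family contains the right-hand side and then conclude equality by Lagrangianity, since both sides are Lagrangian families. This avoids tracking the kernel summands exactly.
\end{itemize}
With the identification in hand, the smooth reduced shifts are Dirac-Nijenhuis with $N_Y$, exactly as in Lemma \ref{lem : MR for DN}. That lemma applies once we know the shift has a Dirac reduction; by Theorem \ref{thm : Dirac reduction} this is equivalent to the reduced family being Dirac, which holds wherever it is smooth.

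\emph{Step 3: concurrence.} It suffices to prove that any two shifts of a single Dirac-Nijenhuis structure $(L,N)$ concur weakly, for instance $\mathfrak{L}_{N^a}(L)$ and $\mathfrak{R}_{N^b}(L)$ (including $a=0$ or $b=0$). Once this holds on any manifold, apply it on $Y$ to $(L_Y,N_Y)$, which is Dirac-Nijenhuis by Lemma \ref{lem : MR for DN}. This makes the reduced shifts concur directly, so Theorem \ref{thm : reduction of weakly concurring concur weakly} and its witness hypothesis are not needed.

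For the pairwise statement I would rely on \cite[Section 9]{FMT_Nijenhuis}, which shows that shifts of a (weak) Dirac-Nijenhuis structure form a concurring hierarchy. In the Poisson case this is the classical fact that the $N^i\pi$ are pairwise compatible. If a direct check is needed, I would verify that the cotangent product of two shifts is the image of $L$ under $(N^a + \text{shift},\,\cdot\,)$. Its Courant tensor then vanishes by the Nijenhuis torsion identity and the condition $\{\Gamma(\mathbb{T}M),\Gamma(L)\}_N\subset\Gamma(L)$.

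The main obstacle is the identification in Step 2, namely keeping track of the kernel summands $L\cap TM$ and $L\cap T^*M$ under reduction. I would handle this on the open dense set where all the relevant intersections have constant rank, and extend by density, as in the proof of Lemma \ref{lem : MR for DN}.
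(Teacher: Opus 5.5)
Your route is the paper's. You use \cite[Section 9]{FMT\_Nijenhuis} for the shifts and their products, Lemma \ref{lem : MR for DN} to make $(L_Y,N_Y)$ Dirac--Nijenhuis, and relatedness of the $N$'s to commute shifts with $p_!i^!$. Your Step 3 is exactly how the concurrence claim follows, and it is sound.

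The gap is in Step 2. The inclusion you plan to prove does hold for the summand $(N_Y^n,\mathrm{id})(L_Y)$, by the relatedness computation you sketch. It fails for the kernel summand. Indeed, $L_Y\cap TY$ is $p_*$ of the vectors $u\in TX$ with $u+\xi\in L_M$ for some $\xi\in N^*X$. When $\xi\neq 0$, such an element contributes to the upstairs shift only through $N_M^nu+\xi$, which reduces to $N_Y^np_*(u)$ rather than $p_*(u)$.

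This is not a rank or smoothness defect that a constant-rank-plus-density argument can repair. Take $M=\mathbb{R}^2$, $L_M=\mathrm{Gr}(\tfrac{\partial}{\partial x_1}\wedge\tfrac{\partial}{\partial x_2})$, $X=\{x_2=0\}=Y$, $p=\mathrm{id}_X$, and $N_M$, $N_X$, $N_Y$ all zero. These are Nijenhuis and related, and $(L_M,0)$ is trivially Dirac--Nijenhuis. Then $L_Y=i^!(L_M)=TX$. For $n=1$ we get $\mathfrak{L}_{N_M}(L_M)=T^*M$, so $p_!i^!\mathfrak{L}_{N_M}(L_M)=T^*X$, whereas $\mathfrak{L}_{N_Y}(L_Y)=TX$. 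Both are smooth, and they differ at every point.

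The $\mathfrak{R}$ case fails dually. Take $L_M=\mathrm{Gr}(\mathrm{d}x_1\wedge\mathrm{d}x_2)$, $X=M$, $p(x_1,x_2)=x_1$ and $N=0$. Then $p_!\mathfrak{R}_{N_M}(L_M)=p_!(TM)=TY$, while $L_Y=T^*Y$ and $\mathfrak{R}_{N_Y}(L_Y)=T^*Y$.

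The paper's proof asserts the same identity $p_!i^!\mathfrak{L}_{N_M^n}(L_M)=\mathfrak{L}_{N_Y^n}(L_Y)$ from relatedness of the $N$'s alone. So this gap is shared with the paper rather than introduced by your route.

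Here is what can be salvaged. At points of $X$ where $N_M$ is invertible, $N_M^n(L_M\cap TM)=L_M\cap TM$, so $\mathfrak{L}_{N_M^n}(L_M)=(N_M^n,\mathrm{id})(L_M)$. There $N_X=N_M|_{TX}$ and $N_Y$ are invertible as well. The kernel summand on $Y$ is then absorbed, since $L_Y\cap TY=N_Y^n(L_Y\cap TY)\subset (N_Y^n,\mathrm{id})(L_Y)$. With that, your one-inclusion-plus-Lagrangian argument goes through, and likewise for $\mathfrak{R}$.

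Without such a hypothesis, only what does not depend on Step 2 survives. The shifts of $(L_Y,N_Y)$ are Dirac--Nijenhuis wherever smooth and concur weakly pairwise, by Lemma \ref{lem : MR for DN} and \cite[Section 9]{FMT\_Nijenhuis}, exactly as in your Step 3.
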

\begin{proof}
As explained in \cite[Section 9]{FMT_Nijenhuis}, the left- and right shifts of Dirac-Nijenhuis structures are always involutive, as are the tangent and cotangent products of any two such shifts. Moreover,
\begin{align*}
 & \left( \mathfrak{L}_{N_M^n}(L_M), N_M \right),
 & \left( \mathfrak{R}_{N_M^n}(L_M), N_M \right)
\end{align*}
are Dirac-Nijenhuis structures wherever they are smooth. 
It follows from Lemma \ref{lem : MR for DN} that $(L_Y,N_Y)$, and therefore
\begin{align*}
 & \left( \mathfrak{L}_{N_Y^n}(L_Y), N_Y \right),
 & \left( \mathfrak{R}_{N_Y^n}(L_Y), N_Y \right),
\end{align*}
are Dirac-Nijenhuis wherever they are smooth, and since
\begin{align*}
 && p_!i^!(L_M) = L_Y, && N_X \sim_i N_M, && N_X \sim_p N_Y,
\end{align*}
we have that
\begin{align*}
 && p_!i^!\mathfrak{L}_{N_M^n}(L_M) = \mathfrak{L}_{N_Y^n}(L_Y), 
 && p_!i^!\mathfrak{R}_{N_M^n}(L_M) = \mathfrak{R}_{N_Y^n}(L_Y).
\end{align*}
\end{proof}

\begin{example}\normalfont
Let $\alpha \in \mathfrak{g}^*$ be a good value of a free Hamiltonian space
\begin{align*}
 G \curvearrowright (M,\mathrm{Gr}(\pi_0)) \stackrel{\mu}{\rmap} \mathfrak{g}^*,
\end{align*}
where $\pi_0$ is a symplectic Poisson structure, the inverse of a closed two-form $\omega_0 \in \Omega^2(M)$. Suppose $\omega_1$ is a $G_{\alpha}$-invariant, closed two-form, such that $N:=\pi_0^{\sharp}\omega_1^{\sharp} : TM \to TM$ is a Nijenhuis endomorphism which satisfies
\begin{align*}
 & N^{-1}(TX) = TX, & X:=\mu^{-1}(\alpha).
\end{align*}
Then
\begin{enumerate}[a)]
 \item $(\omega_0,N)$ is a $\Omega N$-structure,
 \item $(\pi_0,N)$ is a $PN$-structure,
 \item $E_G \subset TM|_X$ is a witness for (the graphs of)
 \begin{align*}
  & \omega_n:=\omega_0 N^n, & \pi_n:= N^n \pi_0,
 \end{align*}
where
\begin{align*}
E_G = \{\mathrm{a}(v)_x \ | \ v \in \mathfrak{g}, \ x \in X\}.
\end{align*}
\end{enumerate}
Indeed, by \cite[Proposition 20]{FMT_Concurrence}, $\mathrm{Gr}(\pi_0)$ and $\mathrm{Gr}(\omega_1)$ concur iff
\begin{align*}
 & \omega_1[u,v]^{\pi_0\omega_1} = [\omega_1(u),\omega_1(v)]^{\pi_0}, & u,v \in \mathfrak{X}(M).
\end{align*}
Because $\pi_0$ is symplectic, this happens exactly when $N=\pi_0\omega_1$ is Nijenhuis:
\begin{align*}
 N[u,v]^N = \pi_0\omega_1[u,v]^{\pi_0\omega_1} = \pi_0[\omega_1(u),\omega_1(v)]^{\pi_0} = [\pi_0\omega_1(u),\pi_0\omega_1(v)] = [N(u),N(v)].
\end{align*}
Thus $(\omega_0,N)$ is an $\Omega N$-structure, and therefore also $(\pi_0,N)$ is a $PN$-structure. By the moment map condition (Lemma \ref{lem : witness for good value}), $E_G$ is a witness for $\mathrm{Gr}(\pi_0)=\mathrm{Gr}(\omega_0)$. Because we assume that $N^{-1}(TX) = TX$, any Dirac structure $R$ of the form
\begin{align*}
 && R = \mathrm{Gr}(N^i\pi_0), && \text{or} && R = \mathrm{Gr}(\omega_0 N^j)
\end{align*}
satisfies
\begin{align}\label{eq : R cap E v R cap NX}
 R \cap E_G^{\perp} = R \cap N^*X^{\perp},
\end{align}
and because $\pi_i:=N^i\pi_0$ and $\omega_j:=\omega_0 N^j$ are $G_{\alpha}$-invariant, we also have that
\begin{align}\label{eq : F bracket L cap E perp}
 \left[ \Gamma(F), \Gamma(R \cap E^{\perp}) \right] \subset \Gamma(F) + \Gamma(R \cap E^{\perp}).
\end{align}
As in the proof of Lemma \ref{lem : witness for good value}, \eqref{eq : R cap E v R cap NX} and \eqref{eq : F bracket L cap E perp} imply that $E_G$ is a witness for $R$.
\end{example}

\begin{corollary}
  Let $L_M$ and $R_M$ be weakly concurring Dirac structures on $M$, and suppose
\begin{align*}
  \xymatrix{
 & M \ar[dl]_{p_L} \ar[dr]^{p_R} \ar[dd]^r & \\
 M_{L} \ar[dr]_{q_R} & & M_{R} \ar[dl]^{q_L}\\
 & M_{LR}
 }
\end{align*}
is the diagram of either Corollary \ref{cor : kernel diamond} or Corollary \ref{cor : Magri diamond}. If $N_M : TM \to TM$ is a Nijenhuis endomorphism, and
\begin{align*}
 & (L_M,N_M), & (R_M,N_M)
\end{align*}
are Dirac-Nijenhuis, then all vertices in the diagram above inherit Nijenhuis structures
\begin{align*}
 && N_{M_L}:TM_L \to TM_L,
 && N_{M_R}:TM_R \to TM_R,
 && N_{M_{LR}}:TM_{LR} \to TM_{LR},
\end{align*}
which are related by the diagram,
\begin{align*}
 && N_M \sim_{p_L} N_{M_L},
 && N_M \sim_{p_R} N_{M_R},
 && N_{M_L} \sim_{q_R} N_{M_{LR}},
 && N_{M_R} \sim_{q_L} N_{M_{LR}},
\end{align*}
and the induced Dirac structures on the vertices are Dirac-Nijenhuis for the corresponding endomorphism.
\end{corollary}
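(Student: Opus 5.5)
The plan is to produce the Nijenhuis endomorphisms on each vertex by pushing $N_M$ forward along foliations that it preserves, and then obtain the Dirac-Nijenhuis property on the vertices from Lemma \ref{lem : MR for DN} (here $X=M$, $i=\mathrm{id}$). A Nijenhuis endomorphism $N_M$ descends along a surjective submersion $p$ with connected fibres, with vertical bundle $F$, exactly when $N_M(F)\subset F$ and $[\Gamma(F),N_M]$ takes values in $F$. The second condition says $\mathscr{L}_v N_M \equiv 0$ modulo $F$ for all $v\in\Gamma(F)$. Given these, $N_M(u)$ is projectable whenever $u$ is projectable, so $N_M \sim_p N_{M_L}$ for a well-defined endomorphism $N_{M_L}$. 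Its Nijenhuis torsion is $p$-related to that of $N_M$ and therefore vanishes. Since $r=q_R\circ p_L=q_L\circ p_R$, once $N_M$ descends along $r$ the relations $N_{M_L}\sim_{q_R}N_{M_{LR}}$ and $N_{M_R}\sim_{q_L}N_{M_{LR}}$ follow from surjectivity of $p_L,p_R$. So the work reduces to checking these two conditions for $K(L_M)$, $K(R_M)$, $K$, and for $E_L$, $E_R$, $E=E_L+E_R$.

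\emph{Kernel case.} From $(N_M,N_M^*)(L_M)\subset L_M$ we get $N_M(K(L_M))\subset K(L_M)$. For the bracket condition, take $u\in\Gamma(K(L_M))$ and any $v+\eta\in\Gamma(\mathbb{T}M)$. The definition \eqref{eq : bracket of prolongation of a} gives
\begin{align*}
\{u,v+\eta\}_{N_M}=[v,N_M u]-N_M[v,u]-\mathscr{L}_u\text{-type terms},
\end{align*}
where the $\eta$-terms $[\eta,N_Mu]-[N_M^*\eta,u]$ are forms. The condition $\{\Gamma(\mathbb{T}M),\Gamma(L_M)\}_{N_M}\subset\Gamma(L_M)$ should force the vector part $(\mathscr{L}_{v}N_M)(u)$, modulo $L_M$, to be tangent to $K(L_M)$ after pairing with $L_M$. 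The cleaner route is to use the tangent/cotangent symmetry: pair with sections of $L_M$ and take $v\in\Gamma(K(L_M))$. I expect to show $(\mathscr{L}_u N_M)(w)\in K(L_M)$ for $u\in \Gamma(K(L_M))$ and arbitrary $w$, by writing $\mathscr{L}_uN_M(w)=[u,N_Mw]-N_M[u,w]$ and applying $\{w+0,u\}_{N_M}\in\Gamma(L_M)$ together with the fact that the result is a pure vector field. The case $K(R_M)$ is symmetric. For $K$ the conditions are additive.

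\emph{Magri case.} Here $E_L=\mathscr{N}(L,R)\cap TM$, and $\mathscr{N}(L,R)=L\star(R\circledast\mathcal{R}_{-1}(L))$ is built from tangent and cotangent products and $\mathcal{R}_{-1}$. My approach is to show that $(\mathscr{N}(L,R),N_M)$ is again Dirac-Nijenhuis, at least on the dense open set where everything is smooth, and then apply the kernel argument to it. For the algebraic invariance $(N_M,N_M^*)$, this is immediate from the explicit product formulas, because $(N_M,N_M^*)$ commutes with $\mathrm{pr}_T$, $\mathrm{pr}_{T^*}$ and $\mathcal{R}_{-1}$. For the bracket condition, I would use the linearity of $\{\cdot,\cdot\}_{N_M}$ in its second slot together with its splitting into a $T$-part (depending on $v$) and a $T^*$-part (depending on $\eta$). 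An element of a product is a sum of a vector part from one factor and a covector part from another, with matching components, so each piece lands in the corresponding factor. Alternatively, in the Poisson-reduced picture of the proof of Theorem \ref{thm : new magri recipe}, $E_L=r_*^{-1}(\nu_L(\ker\nu_R))$ modulo $K$, and $N$ descends to $M_{LR}$ by the kernel case. It then suffices that $N_{M_{LR}}$ preserves $\nu_L(\ker\nu_R)$ up to Lie-derivative terms, which follows from the PN relations $N\nu=\nu N^*$, applied as $N\nu_L(\xi)=\nu_L(N^*\xi)$ with $N^*\ker\nu_R\subset\ker\nu_R$.

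The main obstacle is the bracket condition $[\Gamma(F),N_M]\subset F$ for $F=E_L$. The product of two Dirac-Nijenhuis structures need not obviously satisfy the prolongation condition, and the nonvertical Lie-derivative terms have to be controlled using both PN identities and involutivity (Theorem \ref{thm : new magri recipe}). I would attempt this first in the reduced Poisson setting on $M_{LR}$ and then pull it back via $r$. Finally, the Dirac-Nijenhuis property of the pushed-forward structures follows from Lemma \ref{lem : MR for DN}.
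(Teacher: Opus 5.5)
Your proposal is correct and follows essentially the paper's route. You descend $N_M$ along foliations whose graphs are Dirac--Nijenhuis for $N_M$, then conclude with Lemma~\ref{lem : MR for DN}. In the kernel case, the paper cites the Dirac--Nijenhuis literature; you replace this with the one-line identity $\{w,u\}_{N_M}=(\mathscr{L}_uN_M)(w)$ for $u\in\Gamma(K(L_M))$. You also handle $K=K(L_M)+K(R_M)$ by additivity rather than via $L_M\circledast R_M$ being Dirac--Nijenhuis. Both are fine: the second works because $K(L_M)\oplus K(R_M)\to K$ is a constant-rank surjection, so sections of $K$ split.

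What needs correcting is your assessment of the ``main obstacle''. You already observed that $\{z,v+\eta\}_{N_M}$ has vector part depending only on $v$ and form part depending only on $\eta$. That observation gives exactly the identities $\{z,y_L\circledast y_R\}_{N_M}=\{z,y_L\}_{N_M}\circledast\{z,y_R\}_{N_M}$ and $\{z,x_L\star x_R\}_{N_M}=\{z,x_L\}_{N_M}\star\{z,x_R\}_{N_M}$, together with commutation with $\mathcal{R}_{-1}$. These identities are precisely the paper's key step, so the bracket condition for $E_L$ is already settled.

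Concretely, take $z=w$ a vector field and $v\in\Gamma(E_L)$. Write $v$ locally through composable smooth sections of $L_M$ and $R_M$, which is possible on the open dense set where the relevant maps have constant rank. There the identities give $(\mathscr{L}_vN_M)(w)\in\mathscr{N}(L_M,R_M)\cap TM=E_L$. The inclusion then holds everywhere because $E_L$ is a smooth subbundle. Pointwise invariance of $\mathscr{N}(L_M,R_M)$ under $(N_M,N_M^*)$ gives $N_M(E_L)\subset E_L$. No PN identities or further use of Theorem~\ref{thm : new magri recipe} are needed.

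By contrast, the fallback you propose through $M_{LR}$ would fail for the side vertices. $E_L$ contains $K(L_M)$ but in general not $K(R_M)$. In Example~\ref{ex : Magri diamond}, $E_L=\langle\tfrac{\partial}{\partial x_1},\tfrac{\partial}{\partial x_3}\rangle$ while $K(R_M)=\langle\tfrac{\partial}{\partial x_5}\rangle$. So $E_L$ is not $r_*^{-1}$ of any distribution on $M_{LR}$, and the reduced Poisson picture only sees $E_L+K$, not $E_L$. Drop that alternative and rely on the splitting argument you already gave.
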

\begin{proof}
 In the context of Corollary \ref{cor : kernel diamond}, $(L_M,N_M)$ Dirac-Nijenhuis implies that
 \begin{align*}
  (\mathrm{Gr}(K(L_M)),N_M)
 \end{align*}
is Dirac-Nijenhuis by \cite[Proposition 3.13]{BDN}, and therefore $N_{M_L}:TM_L \to TM_L$ exists as in \cite[Corollary 4.5]{BDN}. Similarly, a Nijenhuis endomorphism $N_{M_R}:TM_R \to TM_R$ exists. As explained in \cite[Section 9]{FMT_Nijenhuis},
\begin{align*}
 (L_M \circledast R_M,N_M)
\end{align*}
is Dirac-Nijenhuis wherever smooth, and therefore the graph of
\begin{align*}
 K=K(L_M \circledast R_M)
\end{align*}
is Dirac-Nijenhuis for $N_M$. Therefore also a Nijenhuis endomorphism
\begin{align*}
 N_{M_{LR}}:TM_{LR} \to TM_{LR}
\end{align*}
exists. Thus the Dirac structures induced on $M_L$,$M_R$ and $M_{LR}$ are Dirac-Nijenhuis for the corresponding Nijenhuis endomorphisms. 
 
 In the context of Corollary \ref{cor : Magri diamond}, we observe as in \cite{FMT_Nijenhuis} that \eqref{eq : bracket of prolongation of a} is compatible with Dirac products, in the following sense: whenever $x_L,y_L \in \Gamma(L_M)$ and $x_R,y_R \in \Gamma(R_M)$ are such that
 \begin{align*}
  & \mathrm{pr}_T(x_L) = \mathrm{pr}_T(x_R),
  & \mathrm{pr}_{T^*}(y_L) = \mathrm{pr}_{T^*}(y_R),
 \end{align*}
 we write
 \begin{align*}
  & x_L \star x_R = x_L + \mathrm{pr}_{T^*}(x_R),
  & y_L \circledast y_R = y_L + \mathrm{pr}_T(y_R).
 \end{align*}
Then 
 \begin{align*}
  \{z,x_L \star x_R\}_{N_M} & = \{z,x_L\}_{N_M} \star \{z,x_R\}_{N_M},\\
  \{z,y_L \circledast y_R\}_{N_M} &= \{z,y_L\}_{N_M} \circledast \{z,y_R\}_{N_M}
 \end{align*}
 for all $z \in \Gamma(\mathbb{T}M)$. This implies that
 \begin{align*}
  & \mathscr{N}(L_M,R_M) = L_M \star \left( R_M \circledast \mathcal{R}_{-1}(L_M) \right), & \mathscr{N}(R_M,L_M) = R_M \star \left( L_M \circledast \mathcal{R}_{-1}(R_M) \right)
 \end{align*}
are Dirac-Nijenhuis for $N_M$ (wherever smooth), and therefore also
\begin{align*}
 && \mathscr{M}(L_M,R_M), 
 && \mathscr{M}(R_M,L_M), 
 && \mathscr{M}(L_M,R_M) \circledast \mathscr{M}(R_M,L_M)
\end{align*}
are Dirac-Nijenhuis for $N_M$ (wherever smooth). This implies that Nijenhuis endomorphisms
\begin{align*}
 && N_{M_L} : TM_L \to TM_L,
 && N_{M_R} : TM_R \to TM_R,
 && N_{M_{LR}} : TM_{LR} \to TM_{LR}
\end{align*}
exist, related by the submersions in the diagram, and all the Dirac structures induced on $M_L$,$M_R$ and $M_{LR}$ are Dirac-Nijenhuis for the corresponding Nijenhuis endomorphisms. 
\end{proof}

\subsection{Complex Dirac structures}

There is a version of Dirac geometry with complex coefficients, which goes as follows: the scalar extensions of the symmetric bilinear form and of the Dorfman bracket
\begin{align}\label{eq : C pairing and bracket}
 & \langle\cdot,\cdot\rangle : \mathbb{T}M \times \mathbb{T}M \to \mathbb{R}, & [\cdot,\cdot] : \Gamma(\mathbb{T}M) \times \Gamma(\mathbb{T}M) \to \Gamma(\mathbb{T}M) 
\end{align}
give rise to
\begin{align*}
 & \langle\cdot,\cdot\rangle_{\mathbb{C}} : \mathbb{T}_{\mathbb{C}}M \times \mathbb{T}_{\mathbb{C}}M \to \mathbb{C}, & [\cdot,\cdot]_{\mathbb{C}} : \Gamma(\mathbb{T}_{\mathbb{C}}M) \times \Gamma(\mathbb{T}_{\mathbb{C}}M) \to \Gamma(\mathbb{T}_{\mathbb{C}}M), 
\end{align*}
where $\mathbb{T}_{\mathbb{C}}M:=\mathbb{T}M \otimes \mathbb{C}$ stands for the complex generalized tangent bundle of $M$. A {\bf $\mathbb{C}$-Dirac} structure $L_M$ is a complex subbundle $L_M \subset \mathbb{T}_{\mathbb{C}}M$ which is Lagrangian and involutive:
\begin{align*}
 & L_M = L_M^{\perp}, & \left\langle \left[ \Gamma(L_M),\Gamma(L_M)\right]_{\mathbb{C}}, \Gamma(L_M)\right\rangle_{\mathbb{C}} = 0.
\end{align*}

\begin{example}\normalfont
If $L_M \subset \mathbb{T}M$ is a Dirac structure on $M$, its \emph{scalar extension} $L_M \otimes \mathbb{C}$ is a $\mathbb{C}$-Dirac structure on $M$. If $L_M \subset \mathbb{T}_{\mathbb{C}}M$ is a $\mathbb{C}$-Dirac structure on $M$, then so is its \emph{conjugate}
 \begin{align*}
  \overline{L_M} = \{ u + \xi \in \mathbb{T}_{\mathbb{C}}M \ | \ \overline{u}+\overline{\xi} \in L_M\}.
 \end{align*}
A $\mathbb{C}$-Dirac structure $L_M$ is the scalar extension of a Dirac structure if and only if it is \emph{real}, $L_M=\overline{L_M}$.
\end{example}

\begin{example}[Building blocks]\normalfont
 In analogy to the real case, there are notable building blocks for $\mathbb{C}$-Dirac structures on $M$:
 \begin{description}
  \item [involutive structures] for a complex vector subbundle $E \subset T_{\mathbb{C}}M$, $\mathrm{Gr}(E)=E\oplus E^{\circ}$ is a $\mathbb{C}$-Dirac structure exactly when $\Gamma(E)$ is involutive under \eqref{eq : C pairing and bracket};
  
  \item [closed complex forms] the graph of $\omega = \omega_1 + i\omega_2$, where $\omega_1,\omega_2 \in \Omega^2(M)$ is a $\mathbb{C}$-Dirac structure exactly when $\omega$ is closed; equivalently, if
 \begin{align*}
  \omega^{\sharp}[u,v]_{\mathbb{C}} = [\omega^{\sharp}(u),v]_{\mathbb{C}}+[u,\omega^{\sharp}(v)]_{\mathbb{C}}
 \end{align*}
 for all $u,v \in \Gamma(T_{\mathbb{C}}M)$;

\item [complex Poisson structures] the graph of $\pi = \pi_1 + i\pi_2$, where $\pi_1,\pi_2 \in \mathfrak{X}^2(M)$, is a $\mathbb{C}$-Dirac structure exactly when
 \begin{align*}
  \pi^{\sharp}[\xi,\eta]_{\mathbb{C}}^{\pi} = [\pi^{\sharp}(\xi),\pi^{\sharp}(\eta)]_{\mathbb{C}}
 \end{align*}
 for all $\xi,\eta \in \Gamma(T^*_{\mathbb{C}}M)$, where
 \begin{align*}
  [\xi,\eta]_{\mathbb{C}}^{\pi} = [\pi^{\sharp}(\xi),\eta]_{\mathbb{C}}+[\xi,\pi^{\sharp}(\eta)]_{\mathbb{C}}
 \end{align*}
 is the \emph{Koszul bracket} of $\pi$.
 \end{description}
\end{example}

\begin{remark}\normalfont
 Formal similarities notwithstanding, $\mathbb{C}$-Dirac structures are quite different from Dirac structures. That is mostly because involutive structures, like
 \begin{align*}
  E = \mathbb{C}\tfrac{\partial}{\partial z}  \subset T_{\mathbb{C}}\mathbb{C},
 \end{align*}
 do not in general partition the ambient space into leaves, unless said involutive structure is real, $E=\overline{E}$. See \cite{Wein07} for further details.
\end{remark}

The operations of Dirac products and stretching by an isotropic family are defined for $\mathbb{C}$-Lagrangian families, as they are in the real case. We describe the complex versions of the real reduction schemes: fix as before a triangle
\begin{align}\tag{$\mathfrak{T}$}
 M \stackrel{i}{\lmap} X \stackrel{p}{\rmap} Y
\end{align}
A \emph{complex} subbundle $E \subset T_{\mathbb{C}}M$ will be called {\bf adapted} if
\begin{align*}
 E \cap T_{\mathbb{C}}X = F_{\mathbb{C}},
\end{align*}
is the scalar extension of a foliation $F \subset TX$, in which case we write
\begin{align*}
 I_{\mathbb{C}} = F_{\mathbb{C}} \oplus N^*_{\mathbb{C}}X \subset \mathbb{T}_{\mathbb{C}}M|_X.
\end{align*}
An adapted subbundle is a {\bf complex witness} of a $\mathbb{C}$-Dirac structure $L_M \subset \mathbb{T}_{\mathbb{C}}M$ if:
\vspace{0.2cm}

\begin{minipage}[c]{0.9\textwidth}
\begin{mdframed}[backgroundcolor=olive!10]
\phantom{}
\begin{enumerate}[\text{CWit}1)]
  \item $L_M[I_{\mathbb{C}}]$ is a smooth subbundle of $\mathbb{T}_{\mathbb{C}}M$;
  \item $L_M[E] \cap (N_{\mathbb{C}}^*X)^{\perp}$ is involutive along $i$;
  \item $L_M \cap E^{\perp} \subset (N_{\mathbb{C}}^*X)^{\perp}$.
 \end{enumerate}
\end{mdframed} 
\end{minipage}
\vspace{0.2cm}

With this notion of \emph{complex witness} in place, we may state:

\begin{theorem}[Complex reduction schemes]\label{thm : complex MR reduction}
Let $L_M$ be a $\mathbb{C}$-Dirac structure on $M$. 
\begin{enumerate}[a)]
 \item $L_M[I_{\mathbb{C}}]$ is $\mathbb{C}$-Dirac exactly when $p_!i^!(L_M)=L_Y$ is a $\mathbb{C}$-Dirac structure on $Y$;
 \item $L_M[I_{\mathbb{C}}]$ is $\mathbb{C}$-Dirac if a complex witness $E$ exists for $L_M$;
 \item If $E$ is a complex witness for two weakly concurring $\mathbb{C}$-Dirac structures $L_M$ and $R_M$, then also $L_Y$ and $R_Y$ concur weakly;
 \item If $X=M$, $E$ is a complex witness for $L_M$ if and only if $L_M \circledast \mathrm{Gr}(E)$ is $\mathbb{C}$-Dirac.
 \item If $L_M=\mathrm{Gr}(\pi)$ corresponds to a complex Poisson structure, $E$ is a complex witness for $L_M$ if and only if $E^{\circ}$ is a Lie subalgebroid of $(T^*_{\mathbb{C}}M,[\cdot,\cdot]^{\pi})$.
\end{enumerate}
\end{theorem}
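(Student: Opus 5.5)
The plan is to run the real arguments again, checking at each step that only linear algebra over an arbitrary field and the formal properties of the Dorfman bracket were used. Complex conjugation and leaves of $L_M$ itself should never enter. The one place where real geometry is needed is the quotient by $F$ and the local splitting of $X$. There we use that $F_{\mathbb{C}}$ is the scalar extension of a real foliation $F=\ker p_*$ with connected fibres, and that $i$, $p$ are real maps. Forward and backward images along $i$ and $p$ are then computed by the complexified relations $\sim_i$ and $\sim_p$. The relations $\phi^!$, $\phi_!$, $\star$, $\circledast$ and $L[I]$ are defined by the same formulas, and the facts used earlier are linear-algebraic or bracket-theoretic. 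These facts are $\phi_!(L\circledast R)=\phi_!L\circledast\phi_!R$, $L[I]=L\circledast\mathrm{Gr}(I)$ for $I\subset T$, and $L[I]=L\star\mathrm{Gr}(I)$ for $I\subset T^*$, so they hold verbatim.

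For a), I would repeat the proof of Theorem~\ref{thm : Dirac reduction}. The ingredients are the complex analogues of Lemma~\ref{lem : induced iff} and Lemma~\ref{lem : libermann}. For Lemma~\ref{lem : induced iff}, the involutivity of $(N^*_{\mathbb{C}}X)^{\perp}$ and the bracket of $i$-related sections are just complexifications, so that proof carries over. For Libermann's theorem, the key point is that $F_{\mathbb{C}}\subset L_X[F_{\mathbb{C}}]$ involutive forces $p_!(L_{X,x})$ to be constant along $F$-leaves. I would argue this as in \cite[Proposition 1]{FM_Dirac}: real vector fields $v\in\Gamma(F)\subset\Gamma(L_X[F_{\mathbb{C}}])$ preserve $L_X[F_{\mathbb{C}}]$ under the Dorfman bracket. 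So their flows, which are real flows along the connected fibres, preserve it, and $p_!$ is constant on fibres. The rest of the proof (descent of sections and brackets) is verbatim. I expect this step to be the main obstacle, since it is the only place where flows are used and one must check that only the \emph{real} foliation $F$ is integrated.

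For b), I would copy Proposition~\ref{pro : CR1}. CWit3) gives $L_M[I_{\mathbb{C}}]=L_M[E]\cap(N^*_{\mathbb{C}}X)^{\perp}+N^*_{\mathbb{C}}X$. This is smooth by CWit1) and involutive by CWit2), and then a) applies. For c), I would run the proof of Theorem~\ref{thm : reduction of weakly concurring concur weakly} unchanged. Every step there uses constant rank on open dense sets, linear splittings (which exist for complex bundles), a real Ehresmann connection for $p$ and a real tubular neighbourhood, complexified, and the relatedness of brackets. For d), I would note as in the CR3 proposition that when $X=M$, CWit3) is vacuous, and CWit1) and CWit2) say that $L_M[E]=L_M\circledast\mathrm{Gr}(E)$ is smooth and involutive.

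For e), I would follow Proposition~\ref{pro : CR2} and Remark~\ref{rem : MR equivalent subalgebroid}. CWit3) becomes $\pi^{\sharp}(E^{\circ})\subset T_{\mathbb{C}}X$. Under this, $L_M[E]\cap(N^*_{\mathbb{C}}X)^{\perp}=F_{\mathbb{C}}\oplus\mathcal{R}_{\pi}(E^{\circ})$. Involutivity of this along $i$ is equivalent to $E^{\circ}$ being closed under the Koszul bracket $[\cdot,\cdot]^{\pi}$ with anchor $\pi^{\sharp}$ landing in $T_{\mathbb{C}}X$, which is exactly the Lie subalgebroid condition. The smoothness in CWit1) follows as in Proposition~\ref{pro : CR2} from smoothness of $E^{\circ}\cap N^*_{\mathbb{C}}X$. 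For the converse, the bracket with $\Gamma(F_{\mathbb{C}})$ is handled by the spanning argument with $\mathcal{S}$ and Leibniz, using the reduced complex Poisson structure from a).
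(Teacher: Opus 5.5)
Your proposal follows the paper's proof, which is exactly the observation that, since $E\cap T_{\mathbb{C}}X=F_{\mathbb{C}}$ is the complexification of a real foliation, the arguments of the Dirac-reduction and witness sections carry over verbatim; you make this explicit and correctly identify where real geometry enters (flows of real vertical fields in Libermann's theorem, and the real Ehresmann connection and tubular neighbourhood in the concurrence proof). One small repair is needed in the converse of e): a) gives the reduced complex Poisson structure only once $L_M[I_{\mathbb{C}}]=F_{\mathbb{C}}\oplus\mathcal{R}_{\pi}(E^{\circ})+N^*_{\mathbb{C}}X$ is known to be $\mathbb{C}$-Dirac, and b) cannot supply this because CWit2 is what you are proving; so first establish it directly from the subalgebroid hypothesis, as the complex analogue of Theorem~\ref{thm : MR}, which is what the paper relies on at this point in Proposition~\ref{pro : CR2} (the Courant tensor of this Lagrangian family vanishes on the three summands), and only then run the $\mathcal{S}$-spanning argument for the $[\Gamma(F_{\mathbb{C}}),\Gamma(\mathcal{R}_{\pi}(E^{\circ}))]$ terms.
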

\begin{proof}
 This essentially consists of the observation that, because $E \cap T_{\mathbb{C}}X$ is the scalar extension of a foliation, none of the complications of $\mathbb{C}$-Dirac structures (vis \`{a} vis usual Dirac structures) intervene, and the arguments of Sections \ref{sec : Dirac reduction} and \ref{eq : Witnesses of concurrent reduction} carry over to the case of complex witnesses.
\end{proof}

\begin{example}\normalfont
 We illustrate how Magri's original recipe can be derived elegantly using the formalism of $\mathbb{C}$-Dirac structures. Let $\pi_L$ and $\pi_R$ be commuting Poisson structures, and define
 \begin{align*}
  & \pi \in \Gamma(\wedge^2T_{\mathbb{C}}M), & \pi=\pi_L+i\pi_R.
 \end{align*}
 Because $\pi_L$ and $\pi_R$ are Poisson and commute, the following are concurring $\mathbb{C}$-Dirac structures:
\begin{align*}
 & L=\mathrm{Gr}(\pi), & \overline{L}=\mathrm{Gr}(\overline{\pi}).
\end{align*}
Consider the complex Lagrangian family
\begin{align*}
 L \star \overline{L} = \{ \pi^{\sharp}(\xi)+\xi+\eta \ | \ \xi,\eta \in T^*_{\mathbb{C}}M, \ \pi^{\sharp}(\xi)=\overline{\pi}^{\sharp}(\eta)\}.
\end{align*}
Then $L \star \overline{L}$ is involutive. Because
\begin{align*}
 (L \star \overline{L}) \cap T_{\mathbb{C}}M  = \pi^{\sharp}(\ker(\pi^{\sharp}+\overline{\pi}^{\sharp}))
\end{align*}
 is the complexification of $E= \pi_R^{\sharp}(\ker(\pi_L^{\sharp}))$ \cite[Corollary 7.5]{Aguero_biv}, also $E$ is involutive; it is thus a foliation if $E$ has locally constant rank. This proves a). For b), suppose that the leaves of $E$ are the fibres of a surjective submersion $p:M \to Y$. Then a sufficient condition for $W:=E \otimes \mathbb{C}$ to be a complex witness for $L$ is that its annihilator
\begin{align*}
 W^{\circ} \subset (T^*_{\mathbb{C}}M,[\cdot,\cdot]^{\pi})
\end{align*}
be a complex Lie subalgebroid of the cotangent Lie algebroid of $\pi$. To see that this is so, note that
\begin{align*}
W^{\circ} =\pi^{\sharp}(\ker(\pi^{\sharp}+\overline{\pi}^{\sharp}))^{\circ} = (\pi^{\sharp})^{-1}(\mathrm{im}(\pi^{\sharp}+\overline{\pi}^{\sharp})),
\end{align*}
and suppose $\xi,\eta \in \Gamma(W^{\circ})$ are such that
\begin{align*}
 & \pi^{\sharp}(\xi) = (\pi^{\sharp}+\overline{\pi}^{\sharp})(\xi'), & \pi^{\sharp}(\eta) = (\pi^{\sharp}+\overline{\pi}^{\sharp})(\eta'),
\end{align*}
for some $\xi',\eta'\in \Gamma(T^*_{\mathbb{C}}M)$. Then
\begin{align*}
 \pi^{\sharp}[\xi,\eta]^{\pi} = [\pi^{\sharp}(\xi),\pi^{\sharp}(\eta)] = [(\pi^{\sharp}+\overline{\pi}^{\sharp})(\xi'),(\pi^{\sharp}+\overline{\pi}^{\sharp})(\eta')] = (\pi^{\sharp}+\overline{\pi}^{\sharp})[\xi',\eta']^{\pi+\overline{\pi}}
\end{align*}
shows that $[\xi,\eta]^{\pi} \in \Gamma(W^{\circ})$. By Theorem \ref{thm : complex MR reduction}, $p_!\mathrm{Gr}(\pi)$ is a $\mathbb{C}$-Dirac structure on $Y$, which is the graph of a complex Poisson structure $\nu=\nu_L+i\nu_R$ on $Y$ since
\begin{align*}
 p_!\mathrm{Gr}(\pi) \cap T_{\mathbb{C}}M & = \{p_*(u)+\xi \ | \ u=\pi^{\sharp}(\xi), \ \xi=0\} = 0.
\end{align*}
Note finally that
\begin{align*}
 & p_!\mathrm{Gr}(\pi_L) = \mathrm{Gr}(\nu_L), & p_!\mathrm{Gr}(\pi_R) = \mathrm{Gr}(\nu_R); 
\end{align*}
hence $E$ is a witness for $\pi_L$ and for $\pi_R$.
\end{example}

\subsection{The full picture of Magri's recipe}

In Magri's original recipe, rather than being performed globally, the reduction took place along a submanifold --- namely, the leaf of one of the Poisson structures. An analogous statement holds for the Dirac version of Magri's recipe, which generalizes the Poisson one. We state the ``left'' version only, the ``right'' one being obtained \emph{mutatis mutandis}:

\begin{proposition}\label{prop : X from L and R}
Let $L_M$ and $R_M$ be weakly concurring Dirac structures, for which
\begin{align*}
 & K:=K(L_M \circledast R_M)=(L_M \circledast R_M)\cap TM, & R_M \circledast \mathrm{Gr}(K)
\end{align*}
are vector bundles. Let $i:X \to M$ be a leaf of $R_M \circledast \mathrm{Gr}(K)$, and denote by
\begin{align*}
\mathscr{M}(L_M,R_M) = E_L \oplus E_L^{\circ}
\end{align*}
the Magri recipe of Definition \ref{def: Magri's recipe}. If $E_L \cap TX$ is a simple foliation, with leaf space $p:X \to Y$, and
\begin{align*}
 && E:=E_L|_X, && L_M \cap E, && R_M \cap E
\end{align*}
are smooth, then $E$ is simultaneously a witness for $L_M$ and for $R_M$.
\end{proposition}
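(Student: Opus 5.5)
Throughout, I work on the open dense set $U \subset M$ used in the proof of Theorem~\ref{thm : new magri recipe}. There I use its local normal forms $L = \mathcal{R}_{\Pi_L}\mathrm{Gr}(K(L))$ and $R = \mathcal{R}_{\Pi_R}\mathrm{Gr}(K(R))$, together with the descriptions of Step~1:
\begin{align*}
 & E_L = K(L) + \Pi_L(H_L), & H_L = (R \circledast \mathrm{Gr}(K)) \cap T^*M, & & E_L^{\circ} = \{\xi \in K(L)^{\circ} \ | \ \Pi_L(\xi) \in K + \Pi_R(K^{\circ})\}.
\end{align*}
The first observation is that, since $X$ is a leaf of $R \circledast \mathrm{Gr}(K) = K + \mathcal{R}_{\Pi_R}(K^{\circ})$, we have
\begin{align*}
 & TX = \mathrm{pr}_T(R \circledast \mathrm{Gr}(K)) = K + \Pi_R(K^{\circ}) = H_L^{\circ}, & N^*X = H_L.
\end{align*}
All three witness conditions are then checked against this description. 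The facts involved are pointwise linear algebra or involutivity statements. So I argue on $U \cap X$ and pass to $X$ by density and continuity, using the smoothness hypotheses on $E$, $L_M \cap E$ and $R_M \cap E$. Step~0 of Theorem~\ref{thm : reduction of weakly concurring concur weakly} lets me also assume $i$ is an embedding.

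\emph{Wit3.} By \eqref{eq : LEL}, $L \cap E_L^{\perp} = K(L) + \mathcal{R}_{\Pi_L}(E_L^{\circ})$. Its anchor lies in $K(L) + \Pi_L(E_L^{\circ}) \subset K + \Pi_R(K^{\circ}) = TX$, by the description of $E_L^{\circ}$. For $R$, \eqref{eq : REL} gives $R \cap E_L^{\perp} = K(R) + \mathcal{R}_{\Pi_R}(K(R)^{\circ} \cap E_L^{\circ})$. Since $E_L^{\circ} \subset K(L)^{\circ}$, we get $K(R)^{\circ} \cap E_L^{\circ} \subset K^{\circ}$, so the anchor lies in $K + \Pi_R(K^{\circ}) = TX$. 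Hence $L \cap E^{\perp}, R \cap E^{\perp} \subset N^*X^{\perp}$.

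\emph{Wit1.} Given Wit3, \eqref{eq: wit3 implies} gives $L[I] = L[E][N^*X]$. Smoothness of $L \cap E$ makes $L[E] = L \cap E^{\perp} + E$ a vector bundle along $X$. It then suffices that $L[E] \cap N^*X = L \cap E^{\perp} \cap T^*M \cap N^*X$ have constant rank. I would compute this as $\mathcal{R}_{\Pi_L}$ applied to $\{\xi \in E_L^{\circ} \cap H_L \ | \ \Pi_L(\xi) \in K(L)\}$, and relate it to $E_L \cap TX$ (a foliation by hypothesis) via annihilators. The same is done for $R$.

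\emph{Wit2.} Using Wit3, $L[E] \cap N^*X^{\perp} = L \cap E^{\perp} + F$, where $F = E \cap TX$. I would establish involutivity along $i$ from two facts. First, $L \circledast \mathscr{M}(L,R) = L[E_L]$ is involutive by Theorem~\ref{thm : new magri recipe}, via Steps~2 and~3 of its proof. Second, $N^*X^{\perp} = TX \oplus T^*M|_X$ is involutive along $X$. Sections of $\mathbb{T}M$ whose restrictions to $X$ lie in $L[E_L] \cap N^*X^{\perp}$ may be chosen, locally and after the smoothness reductions, to be restrictions of sections of $L[E_L]$ with anchor tangent to $X$. Their bracket then lies in both, and hence in $L[E] \cap N^*X^{\perp}$. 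The same argument with $R[E_L] = R \circledast \mathscr{M}(L,R)$ handles $R$.

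The main obstacle I expect is this last extension step. In the definition of involutivity along $i$, we must bracket \emph{arbitrary} ambient sections that take values in $L[E] \cap N^*X^{\perp}$ only along $X$. These need not extend to sections of $L[E_L]$ off $X$, and $E_L$ is only a smooth bundle on the dense set $U$. I would handle this with the Leibniz argument of Lemma~\ref{lem : inherits lie alg str}: the bracket restricted to $X$ depends only on the restrictions when anchors are tangent to $X$. This reduces the problem to checking brackets on local frames of $L \cap E^{\perp}$ and $F$ near $X$ that do extend to sections of $L[E_L]$. It is exactly here that the smoothness of $L_M \cap E$, $R_M \cap E$ and the simplicity of $E_L \cap TX$ should be used. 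The linear-algebra computation in Wit1 is the secondary risk.
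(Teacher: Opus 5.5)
\paragraph{Wit3 and Wit2.} Both agree with the paper. The anchors of $L_M\cap E_L^{\perp}=K(L)+\mathcal{R}_{\Pi_L}(E_L^{\circ})$ and $R_M\cap E_L^{\perp}=K(R)+\mathcal{R}_{\Pi_R}(K(R)^{\circ}\cap E_L^{\circ})$ land in $K+\Pi_R(K^{\circ})=TX$. Your Wit2 argument intersects the involutive $L_M[E_L]$, $R_M[E_L]$ of Theorem~\ref{thm : new magri recipe} with the involutive $N^*X^{\perp}$; this is the paper's argument too, and the paper does not address the extension issue you flag.

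\paragraph{The gap: Wit1.} The identity $L[E]\cap N^*X=L\cap E^{\perp}\cap T^*M\cap N^*X$ is false. Since $E\subset TM$, an element $a+e$ of $L\cap E^{\perp}+E$ is a covector as soon as $\mathrm{pr}_T(a)=-e\in E$. Hence $L[E]\cap T^*M=\{\xi\in E^{\circ}\ |\ u+\xi\in L \text{ for some } u\in E\}$, which is in general strictly larger. Here is a concrete case. Take $\pi_L=\tfrac{\partial}{\partial x_1}\wedge\tfrac{\partial}{\partial x_2}+\tfrac{\partial}{\partial x_3}\wedge\tfrac{\partial}{\partial x_4}$ and $\pi_R=\tfrac{\partial}{\partial x_1}\wedge\tfrac{\partial}{\partial x_3}$ on $\mathbb{R}^4$, and let $X$ be a leaf of $\pi_R$. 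All hypotheses hold, and $E=TX$. Then $L[E]=\langle \tfrac{\partial}{\partial x_1},\tfrac{\partial}{\partial x_3},\mathrm{d}x_2,\mathrm{d}x_4\rangle$, so $L[E]\cap N^*X=N^*X$ has rank two, whereas $L\cap T^*M=0$. So the object you propose to compute is the wrong one. Moreover, the constant-rank claim is never actually established: ``relate it via annihilators'' is not an argument.

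The missing observation, which the paper uses, is that no such computation is needed. By Wit3, and as you yourself note in the Wit2 paragraph, $L[E]\cap N^*X^{\perp}=L\cap E^{\perp}+F$. Hence
\begin{align*}
 L[I]=L[E]\cap N^*X^{\perp}+N^*X=F+L\cap E^{\perp}+N^*X
\end{align*}
is a sum of smooth subbundles:
\begin{itemize}
 \item $F$ is smooth by hypothesis;
 \item $L\cap E^{\perp}=(L+E)^{\perp}$ is smooth, because $L\cap E$ has constant rank;
 \item $N^*X$ is smooth.
\end{itemize}
Since $L[I]$ is Lagrangian it has constant rank. A constant-rank family spanned by smooth local sections is a smooth subbundle, which gives Wit1. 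The same argument applies to $R$.

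\paragraph{Density reduction.} Your step ``argue on $U\cap X$ and pass to $X$ by density'' is not valid as stated. $U$ is dense in $M$, but a leaf $X$ can lie entirely in $M\setminus U$, for instance inside the locus where $E_L$ jumps rank. Then $U\cap X$ is empty. The paper does not make this reduction; it simply works with the normal forms and pointwise descriptions of Step~1 of the proof of Theorem~\ref{thm : new magri recipe} along $X$.
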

\begin{proof}
Observe first that
\begin{align*}
 TX = K|_X + \Pi_R(K^{\circ}).
\end{align*}
Retain the notation of the proof of Theorem \ref{thm : new magri recipe}, and recall that
\begin{align*}
 E_L^{\circ} = \{ \xi \in K(L)^{\circ} \ | \ \Pi_L(\xi) \in \Pi_R(K^{\circ})+K\}.
\end{align*}
Because
\begin{align*}
 & L_M \cap E_L^{\perp} = K(L) + \mathcal{R}_{\Pi_L}(E_L^{\circ}),
 & R_M \cap E_L^{\perp} = K(R) + \mathcal{R}_{\Pi_R}(E_L^{\circ} \cap K^{\circ})
\end{align*}
by \eqref{eq : LEL} and \eqref{eq : REL}, we see that
\begin{align*}
 & L_M \cap E^{\perp} \subset N^*X^{\perp},
 & R_M \cap E^{\perp} \subset N^*X^{\perp}.
\end{align*}
Thus $E$ satisfies \hyperref[Wit3]{Wit3)} with respect to both $L_M$ and $R_M$, and this implies that
\begin{align*}
 L_M[I] & = L_M[E] \cap N^*X^{\perp} + N^*X = F + L_M \cap E^{\perp} + N^*X\\
 R_M[I] & = R_M[E] \cap N^*X^{\perp} + N^*X = F + R_M \cap E^{\perp} + N^*X.
\end{align*}
Because $L_M[I]$ and $R_M[I]$ are Lagrangian families, and $F$, $L_M \cap E^{\perp}$ and $R_M \cap E^{\perp}$ are smooth by hypothesis, it follows that $L_M[I]$ and $R_M[I]$ are smooth, and so $E$ satisfies \hyperref[Wit1]{Wit1)} with respect to both $L_M$ and $R_M$. Observe finally that, by Theorem \ref{thm : new magri recipe}, $L_M[E_L]$ and $R_M[E_L]$ are involutive, and therefore so are
\begin{align*}
 & L_M[E] \cap N^*V^{\perp} = L_M[E_L] \cap N^*V^{\perp},
 & R_M[E] \cap N^*V^{\perp} = R_M[E_L] \cap N^*V^{\perp}
\end{align*}
for all connected open set $V \subset X$ that $i$ embeds. Therefore $E$ also satisfies \hyperref[Wit2]{Wit2)} with respect to $L_M$ and $R_M$. 
\end{proof}

\begin{remark}\normalfont
In the setting of Proposition \ref{prop : X from L and R}, suppose $X \subset M$ is any submanifold which is saturated by leaves of $L_M$ and of $R_M$. Then the Magri recipe can be performed along $X$, under constant rank conditions similar to those of \emph{loc. cit.}.
\end{remark}

\begin{remark}\normalfont
Suppose a surjective submersion with connected fibres $r:M \to Y$ exists, whose vertical bundle is $K=K(L)+K(R)$. Then we have an equality
 \begin{align*}
  \mathscr{M}\left(r^!r_!(L),r^!r_!(R)\right) = r^!\mathscr{M}\left(r_!(L),r_!(R)\right)
 \end{align*}
Therefore the reduction under the Magri recipe (Theorem \ref{thm : new magri recipe}) of $r^!r_!(L)$ and $r^!r_!(R)$ coincides with the concatenation of the Reduction by kernels of Theorem \ref{thm : kernel}, followed by Magri's original recipe for the Poisson structures $r_!(L)$ and $r_!(R)$.
\end{remark}

\begin{example}\label{ex : kernel then magri}\normalfont
As shown in Example \ref{ex : kernel reduction}, the Dirac structures
\begin{align*}
  & L_M = \mathrm{Gr}\left(\mathrm{d}x_2 \wedge \mathrm{d}x_3 + \mathrm{d}x_4 \wedge \mathrm{d}x_5\right),
  & R_M = \mathrm{Gr}\left(\mathrm{d}x_1 \wedge \mathrm{d}x_2 + \mathrm{d}x_3 \wedge \mathrm{d}x_4 \right)
 \end{align*}
on $M=\mathbb{R}^5(x_1,x_2,x_3,x_4,x_5)$ both push forward under the submersion
\begin{align*}
 & r:\mathbb{R}^5 \to \mathbb{R}^3, & r(x_1,x_2,x_3,x_4,x_5) = (x_2,x_3,x_4)
\end{align*}
to the commuting Poisson structures
\begin{align*}
  && r_!(L_M) = \mathrm{Gr}\left( \nu_L \right), && \nu_L = \tfrac{\partial}{\partial x_3}\wedge \tfrac{\partial}{\partial x_2},
  && r_!(R_M) = \mathrm{Gr}\left( \nu_R \right), && \nu_R = \tfrac{\partial}{\partial x_4}\wedge \tfrac{\partial}{\partial x_3}
 \end{align*}
on $P:=M_{K(LR)} = \mathbb{R}^3(x_2,x_3,x_4)$. The original Magri recipe for $\nu_L$ and $\nu_R$ is
\begin{align*}
 E_{\mathrm{Gr}( \nu_L )} = \nu_L(\ker \nu_R) = \left\langle \tfrac{\partial}{\partial x_3} \right\rangle
\end{align*}
and therefore
\begin{align*}
  {p'_L}_!\mathrm{Gr}\left( \nu_L \right) = T^*P_{E_{\mathrm{Gr}( \nu_L )}} = {p'_L}_!\mathrm{Gr}\left( \nu_R \right),
 \end{align*}
where $P_{E_{\mathrm{Gr}( \nu_L )}} = \mathbb{R}^2(x_2,x_4)$. On the other hand, as shown in Example \ref{ex : Magri diamond}, the reductions of $L_M$ and $R_M$ under the Magri recipe
\begin{align*}
 E_{L_M} = K\left( L_M \star \left( R_M \circledast \mathcal{R}_{-1}(L_M) \right)\right) = \left\langle \tfrac{\partial}{\partial x_1}, \tfrac{\partial}{\partial x_3} \right\rangle
\end{align*}
are, respectively,
\begin{align*}
 & L_{M_{E_{L_M}}} = \mathrm{Gr}(\tfrac{\partial}{\partial x_5} \wedge \tfrac{\partial}{\partial x_4}), 
 & R_{M_{E_{L_M}}} = \mathrm{Gr}(\tfrac{\partial}{\partial x_5})
\end{align*}
on $M_{E_{L_M}} = \mathbb{R}^3(x_2,x_4,x_5)$. Therefore the new Magri recipe for $L_M$ and $R_M$ of Theorem \ref{thm : new magri recipe} does not consist of applying the original Magri recipe to the Poisson structures obtained by the common kernel reduction of Theorem \ref{thm : kernel}. 
\end{example}

\subsection{Comparison with the literature}We conclude the paper with a few remarks concerning the literature.

\begin{enumerate}[I)]
 \item The condition of involutivity along an injective immersion (Definition \ref{def : involutive along}) is a version of the condition involutivity along an embedded submanifold of \cite[Section 2]{Li-Bland} or \cite[Appendix B2]{BCH}. It seems new, and allows the discussion to happen at the generality of injective immersions, in the spirit of \cite[Chapter 10]{Ortega_Ratiu}.
 
 \item Even in the embedded case, Lemma \ref{lem : induced iff} refines the criterion in \cite[Theorem 3.1.1]{Courant} and \cite[Proposition 5.6]{Bursztyn} which ensures that the pullback $i^!(L)$ of a Dirac structure $L$ is again Dirac --- namely, that $L \cap N^*X$ be a vector bundle:
\begin{align*}
 && L \cap N^*X \ \text{VB} && \Longleftrightarrow && L \cap N^*X^{\perp} \ \text{VB} && \Longrightarrow && L[N^*X] = L \cap N^*X^{\perp} + N^*X \ \text{VB}.
\end{align*}

 \item The characterization of the split condition is a straightforward adaptation of \cite[Proposition 3.31]{Meinrenken_Poisson}; see also \cite[Lemma 2.15]{BFM}.

 \item Various reduction schemes for Dirac structures have been proposed in the literature. The majority of the literature \cite{CF,BGC,Zambon_Branes,CFZ,Sniatycki,FM_Dirac,CO,Balibanu} falls under the scope of what we call in the paper ``Dirac reduction'' --- namely, necessary conditions under which a Dirac structure can be transferred from $M$ to $Y$. Often such procedures are termed of ``Marsden-Ra\cb{t}iu kind'' by their authors. As discussed in the paper, in our terminology, we advocate that only \underline{concurring} reduction schemes should be regarded as of ``Marsden-Ra\cb{t}iu kind'', as that is an essential feature of the original Marsden-Ra\cb{t}iu reduction in Poisson geometry, as explained later in \cite{CMP,Costa_Marle,CFMP}. 
 
 \item In \cite{CF}, the authors consider the setting of a submersion $p:M \to B$, a Dirac structure $L_M$ on $M$ which can be pushed forward through $p$, and a submanifold $X \subset M$ for which $p:X \to Y=p(X)$ is still a submersion, and
 \begin{align*}
  p_!i^!p^!p_!(L_M) = L_Y
 \end{align*}
is a Dirac structure.

 \item The simplified approach in \cite{FZ} to (classical) Marsden-Ra\cb{t}iu reduction was very useful in formulating the general notion of witness, as was the observation in \cite{CFZ} that
 \begin{align*}
 i^!\left( \mathrm{Gr}(\pi_M)[E] \right) = p^!\mathrm{Gr}(\pi_Y).
\end{align*}
We observe, however that the ``extended Marsden-Ra\cb{t}iu reduction'' of \cite{FZ} moves in the direction of what we have called Dirac reduction, see e.g. \cite[Proposition 4.1]{FZ}.
 
 \item Dirac reduction (Theorem \ref{thm : Dirac reduction}) is discussed in \cite{BGC} and \cite{Zambon_Branes} in the more general situation in which the isotropic subbundle $I \subset \mathbb{T}M|_X$ is allowed to be a more general isotropic subbundle, and where a background $3$-form is allowed. We note that our results have a straightforward generalization to the case where $L_M$ is Dirac with respect to the Dorfman bracket \emph{twisted} by a closed 3-form $H_M \in \Omega^3(M)$,
 \begin{align*}
  [u+\xi,v+\eta]:=[u,v] + \mathscr{L}_u\eta - i_v\mathrm{d}\xi + i_{u \wedge v}H_M,
 \end{align*}
provided that the pullback of $H_M$ to $X$ be basic:
\begin{align}\label{eq : 3-form pulls to basic}
 & i^*(H_M) = p^*(H_Y), & H_Y \in \Omega^3(Y).
\end{align}
Under this condition, Theorems \ref{thm : Dirac reduction} and \ref{thm : reduction of weakly concurring concur weakly} remain true in this setting with background forms. Also the recipes of Theorems \ref{thm : kernel} and \ref{thm : new magri recipe} carry over to this setting if \eqref{eq : 3-form pulls to basic} is satisfied for the pertinent triangles. 
 
\item Our insistence that witnesses be subbundles of $TM|_X$ is directly tied to Theorem \ref{thm : reduction of weakly concurring concur weakly}: if one were to replace $\eqref{adapted subbundle}$ by a general isotropic subbundle of $\mathbb{T}M|_X$, two Dirac structures $L_M$ and $R_M$ witnessed by $E$ would have to satisfy additional conditions to ensure that $L_M \circledast R_M$ is also witnessed by $E$.

\item A different reduction scheme for \eqref{triangle} is considered in \cite{Balibanu}, in which the extra datum is given by a closed two-form $\omega \in \Omega^2(X)$\footnote{Their setting allows more generally for a background 3-form, and requires merely that $p:X \to Y$ have constant rank, but we ignore that for the sake of comparison of approaches.}. The Dirac reducibility condition is replaced by
\begin{align}\label{eq : BM reducible}\tag{BM}
 p_!\mathcal{R}_{-\omega}i^!(L_M) = L_Y
\end{align}
for a Dirac structure $L_Y$ on $Y$. The many beautiful examples in \emph{loc. cit.} attest the versatility and usefulness of this reduction scheme. However, extensions of Dirac reduction of this nature do not respect concurrence, as the example below illustrates.
\end{enumerate}

\begin{example}\normalfont
Let $M = X = Y = \mathbb{R}^3$, equipped with the Dirac structures
\begin{align*}
 & L = \left\langle \tfrac{\partial}{\partial x_1}, \tfrac{\partial}{\partial x_2}, \mathrm{d}x_3 \right\rangle,
 & R = \left\langle x_1\tfrac{\partial}{\partial x_3} + \mathrm{d}x_1, \mathrm{d}x_2, x_1\tfrac{\partial}{\partial x_1} - \mathrm{d}x_3 \right\rangle.
\end{align*}
Then
\begin{align*}
 L \circledast R = L
\end{align*}
shows that $L$ and $R$ concur. Consider the closed two-form
\begin{align*}
 \omega = \mathrm{d}x_1 \wedge \mathrm{d}x_2.
\end{align*}
Then
\begin{align*}
 & \mathcal{R}_{-\omega}(L) = \mathrm{Gr}\left( \tfrac{\partial}{\partial x_1} \wedge \tfrac{\partial}{\partial x_2} \right),
 & \mathcal{R}_{-\omega}(R) = \mathrm{Gr}\left( x_1\tfrac{\partial}{\partial x_1} \wedge \tfrac{\partial}{\partial x_3} \right)
\end{align*}
Because $i=p=\mathrm{id}_M$, this shows that both $L$ and $R$ satisfy (BM). However,
\begin{align*}
D:=\mathcal{R}_{-\omega}(L) \circledast \mathcal{R}_{-\omega}(R) = \left\langle a,b,c \right\rangle,
\end{align*}
where
\begin{align*}
 && a = \tfrac{\partial}{\partial x_1}-\mathrm{d}x_2, && b = \tfrac{\partial}{\partial x_2} + x_1\tfrac{\partial}{\partial x_3} + \mathrm{d}x_1, && c:= -x_1\tfrac{\partial}{\partial x_1} + \mathrm{d}x_3
\end{align*}
is not Dirac, since
\begin{align*}
\Upsilon_D(a,b,c) = \langle [a,b],c\rangle = 1.
\end{align*}
\end{example}

\end{document}